\setlist[itemize]{leftmargin=4mm}
\newcommand{\p}{\partial}
\newcommand{\Q}{{\mathbb Q}}
\newcommand{\C}{{\mathbb C}}
\newcommand{\R}{{\mathbb R}}
\newcommand{\Z}{{\mathbb Z}}
\newcommand{\N}{{\mathbb N}}
\newcommand{\T}{{\mathbb T}}
\newcommand{\sS}{{\cal S}}
\newcommand{\TT}{{\cal T}}
\newcommand{\lag}{\langle}
\newcommand{\rag}{\rangle}
\newtheorem{theorem}{Theorem}[section]
\newtheorem{lemma}[theorem]{Lemma}
\newtheorem{proposition}[theorem]{Proposition}
\newtheorem{corollary}[theorem]{Corollary}
\theoremstyle{definition}
\theoremstyle{remark}
\newtheorem{remark}[theorem]{Remark}
\numberwithin{equation}{section}
\title{$H^1$ local exact controllability of some one-dimensional bilinear Schrödinger~equations}
\author
{
{Nabile Boussa\"id\,\footnote{Universit\'{e} Marie et Louis Pasteur, CNRS, Institut UTINAM, \'{E}quipe de physique th\'{e}orique,F-25000 Besan\c{c}on, France}}
\and
{Alessandro Duca\,\footnote{Universit\'e de Lorraine, CNRS, Inria, IECL, F-54000 Nancy, France}}
}
\date{}
\begin{document}

\maketitle

\begin{abstract}
The local exact controllability of the one-dimensional bilinear Schrödinger equation with Dirichlet boundary conditions has been extensively studied in subspaces of $H^3$ since the seminal work of K. Beauchard. Our first objective is to revisit this result and establish the controllability in $H^1_0$ for suitable discontinuous control potentials. In the second part, we consider the equation in the presence of periodic boundary conditions and a constant magnetic field. We prove the local exact controllability of periodic $H^1$-states, thanks to a Zeeman-type effect induced by the magnetic field which decouples the resonant spectrum. Finally, we discuss open problems and partial results for the Neumann case and the harmonic oscillator.

\medskip
\noindent
{\bf AMS subject classifications: 35Q41, 93C20, 93B05}

\medskip

\noindent
{\bf Keywords: Schrödinger equation, bilinear quantum systems, exact controllability, moment problem}
\end{abstract}

\section{Introduction}
Bilinear quantum control theory investigates the possibility of driving quantum systems to desired states through the application of external forces.
For instance, in studying the dynamics of a charged particle, 
a viable control mechanism is the application of external electromagnetic potential fields, which influence and steer its evolution. 
Mathematically, the quantum state associated with the particle moving on a domain $\Omega\subset \R^d$, $d\in\N$, can be represented as a unitary state $\psi\in L^2(\Omega,\C)$ and its evolution is governed by the Schrödinger equation
 \begin{equation}
 	\label{0.1}
 	i \p_t \psi =\Big[-\big(div+i A(t,x) \big)\circ \big(\nabla +i  A(t,x)\big)+ E(t,x)\Big]\, \psi,\, \quad x\in \Omega,\ t>0,
 \end{equation}
 in the presence of suitable boundary conditions. In this formulation, the magnetic potential is represented as $A(t,x)$, while $E(t,x)$ models the electric potential.

In practice, it is often useful to employ external fields with controllable intensity and predefined spatial distributions. These types of controls are usually referred to as ``bilinear'', and an example is \begin{align}\label{separate}A(t,x)=u_0(t)\mu_0(x),\qquad \text{ and }\qquad \ E(t,x)=u_1(t)\mu_1(x) .\end{align}
 Here, the functions $u_0:\R^+\rightarrow\R$ and $u_1:\R^+\rightarrow\R$ describe the time-dependent intensity of the magnetic and electric fields, respectively. The spatial distributions of these potentials are captured by the functions $\mu_0:\Omega\rightarrow\R^d$ for the magnetic field and $\mu_1:\Omega\rightarrow\R$ for the electric field.

In this work, we investigate the local exact controllability of the Schrödinger equation~\eqref{0.1} in the one-dimensional case $\Omega = (0,1)$ via bilinear controls as~\eqref{separate}. We start with the case where Dirichlet boundary conditions are satisfied. This problem has been extensively studied over the past two decades,
starting with the foundational work by Beauchard~\cite{Bea05}, later improved by Beauchard and Laurent~\cite{BL-2010} who considered suitable subspaces of $H^3$ and a neighborhood of the ground state.
We revisit the existing theory for the Dirichlet case: we achieve the local exact controllability within the space $H^1_0$ nearby some eigensolutions. Then, we consider the case of periodic boundary conditions in the presence of Zeeman-type effect. We ensure the local exact controllability for periodic $H^1$-functions nearby any eigensolution. Finally, we discuss some open problems concerning the Neumann boundary conditions and the quantum harmonic oscillator. 
For instance, in the case of the harmonoc oscillator, we prove the global exact controllability result of a suitable linearised system.

\subsection*{The local exact controllability in the Dirichlet Case}
Let us start by considering the Schrödinger equation~\eqref{0.1} with $\Omega=(0,1)$ and Dirichlet-type boundary conditions. 
We set $A=0$ and $E(t,x)=u(t)\mu(x)$ which allows us to rewrite~\eqref{0.1} in the form
 \begin{equation}
 \label{0.1_d}\begin{cases}
 	i \p_t \psi =-\Delta\psi+ u(t)\mu(x)\psi, & \quad x\in (0,1),\ t>0,\\
  \psi(t,x=0)=\psi(t,x=1)=0,& \quad t>0,\\
  \psi_0(t=0)=\psi_0\in L^2((0,1),\C).
\end{cases}
\end{equation}
The problem is considered in the Hilbert space $L^2=L^2((0,1),\C)$ endowed with the norm $\|\cdot\|_{L^2}$ associated with the hermitian product
$$\lag f,g \rag_{L^2}=\int_0^{1} f(x) \overline{g}(x) dx,\qquad  \forall f,g\in L^2((0,1),\C).$$
The Dirichlet Laplacian appearing in~\eqref{0.1_d} is self-adjoint and with compact resolvent. Its spectrum is composed by isolated and simple eigenvalues $(\lambda_k)_{k\in\N^*}$ such that
$$\lambda_k={k^2\pi^2},$$
and a corresponding to a family of eigenfunctions, forming a Hilbert basis for $L^2((0,1),\C)$, is such that
$$\phi_k(x)= \sqrt{2}\sin\big({k}\pi x\big),\qquad  x\in (0,1),\quad \forall k\in\N^*.$$
 We also introduce the Hilbert space $H^1_0=H^1_0((0,1),\C)$ endowed with the norm~
$$\|\cdot\|_{H^1_0}=\sqrt{\sum_{k=1}^\infty|k\lag \phi_k,\cdot\rag_{L^2}|^2},$$
which is equivalent to the common norm $\|\partial_x\cdot\|$. 
Let $A:=\{a_j\}_{j\leq N}\subset (0,1)$ with $N\in\N^*$, and the space
\begin{align*}
H^1(A)&=\Big\{\psi \in L^2((0,1),\R)\,:\,\psi \text{ is of class $H^1$ up to the discontinuities } \{a_j\}_{j\leq N}\Big\}\\
&=H^1((0,a_1),\R)\times\Big(\prod_{j=1}^{N-1}H^1((a_j,a_{j+1}),\R)\Big)\times H^1((a_N,1),\R)
\end{align*}
endowed with the norm
$$\|\cdot\|_{H^1(A)}:=\sqrt{\|\cdot\|_{H^1(0,a_1)}^2+...+\|\cdot\|_{H^1(a_N,1)}^2}.$$
We introduce the eigensolutions of the Dirichlet Laplacians
$$\phi_l(t) =e^{-i\lambda_l t}\phi_l,\qquad  \forall l\in\N^*.$$
We are now ready to present the well-posedness of~\eqref{0.1_d} and its local exact controllability in $H^1_0$.

\begin{theorem}\label{theorema}
Let $A:=\{a_j\}_{j\leq N}\subset (0,1)$ with $N\in\N^*$ and $\mu\in H^1(A)$.

\smallskip

\noindent
{\bf 1.} Let $T>0$, $\psi_0\in H^1_0$ and $u\in L^2( (0,T) ,\R)$. Then, there
exists
a unique
mild
solution $\psi \in C( [0,T] ,H^1_0)$
to the problem~\eqref{0.1_d}
that is a solution to
the Duhamel formula:
$$
\psi(t)=e^{i\Delta t} \psi_0-i\int_0^te^{i\Delta(t-s)} u(s)\mu \psi(s) d s, \quad t\in [0,T] .$$

\smallskip

\noindent
{\bf 2.} Let $l\in\N^*$ such that
\begin{align}\label{resonant} j^2 - l^2\neq l^2-k^2,\qquad  \forall j,k\in\N^*\setminus\{l\}.\end{align}
Assume there exists a constant $C>0$ such that
\begin{equation}\label{1.9_d}
|\lag \mu\phi_l , \phi_{k}\rag_{L^2}|	\ge \frac{C}{k},\qquad \qquad  \forall k\in\N^*.
\end{equation} 
For any $T>0$, there exists $\delta>0$ such that, for~any~$\psi_0, \psi_1 \in H^1_0$ with $\|\psi_0\|_{L^2}=\|\psi_1\|_{L^2}=1$ and
\begin{equation*}
 \|\psi_0-\phi_l\|_{H^1_0}<\delta, \qquad \qquad \qquad  \|\psi_1-\phi_l(T)\|_{H^1_0}<\delta,
\end{equation*}
 there exists a control $u\in L^2((0,T),\R)$ such that the corresponding solution $\psi$ of  the problem~\eqref{0.1_d} with $\psi(0)=\psi_0$ satisfies
 $$\psi(T)= \psi_1.$$
 \end{theorem}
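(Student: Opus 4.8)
The strategy is the classical Beauchard–Laurent scheme, adapted to the $H^1_0$ setting: prove local exact controllability by combining (i) the well-posedness and regularity of the flow from Part 1, (ii) an exact controllability result for the linearized system around the eigensolution $\phi_l(t)$, and (iii) an inverse mapping theorem argument that transfers linear controllability to the nonlinear problem. First I would write the linearization of \eqref{0.1_d} at $(\psi = \phi_l(t),\, u = 0)$: setting $\psi = \phi_l(t) + \Psi$ and keeping the first-order term, one gets $i\p_t \Psi = -\Delta \Psi + u(t)\mu(x)\phi_l(t)$, $\Psi(0) = \Psi_0$, whose solution is $\Psi(T) = e^{i\Delta T}\Psi_0 - i\int_0^T e^{i\Delta(T-s)} u(s)\mu\,\phi_l(s)\,ds$. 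Projecting $e^{-i\Delta T}\Psi(T)$ onto the Hilbert basis $(\phi_k)_k$ and using $\p_t\phi_l(t) = -i\lambda_l\phi_l(t)$, the $k$-th coefficient of the reachable part is $-i\lag\mu\phi_l,\phi_k\rag_{L^2}\int_0^T u(s) e^{i(\lambda_k - \lambda_l)s}\,ds$; so controllability of the linearized system (modulo the unavoidable phase/norm direction tangent to the sphere) reduces to solvability of the trigonometric moment problem $\int_0^T u(s) e^{i(\lambda_k-\lambda_l)s}\,ds = d_k$ in $L^2(0,T)$ for a prescribed $\ell^2$-type sequence $(d_k)$.

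The key analytic input here is that the frequency family $\{\lambda_k - \lambda_l : k \in \N^*\} = \{(k^2 - l^2)\pi^2\}$ is, under the non-resonance hypothesis \eqref{resonant}, a set of distinct real numbers with a uniform gap condition (indeed $|\lambda_{k+1} - \lambda_k| = (2k+1)\pi^2 \to \infty$), so by Ingham-type / Haraux's theorem the exponentials $(e^{i(\lambda_k-\lambda_l)t})_k$ form a Riesz basis of their closed span in $L^2(0,T)$ for every $T > 0$; condition \eqref{resonant} is exactly what guarantees the frequencies are pairwise distinct (no two indices $j,k \neq l$ collapse onto the same difference), which is needed so the moment problem has a genuine Riesz-basis structure rather than being overdetermined. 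Combining this with the lower bound \eqref{1.9_d}, $|\lag\mu\phi_l,\phi_k\rag_{L^2}| \ge C/k$, one sees that a target perturbation $\Psi_1$ of class $H^1_0$ — i.e. with coefficients decaying like $1/k$ relative to $L^2$ — corresponds to a moment sequence $(d_k)$ with $d_k = O(1)$-times an $\ell^2$ sequence after dividing by $\lag\mu\phi_l,\phi_k\rag_{L^2}$, hence is attainable by some $u \in L^2(0,T)$ with $\|u\|_{L^2} \lesssim \|\Psi_1\|_{H^1_0}$. This yields a bounded surjection (onto the codimension-one subspace of $H^1_0$ tangent to the $L^2$-sphere at $\phi_l(T)$, after quotienting out the phase) from controls to linearized targets; I would record this as the linear controllability lemma with an explicit bounded right inverse $u = \Gamma(\Psi_0,\Psi_1)$.

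To pass to the nonlinear system, I would set up the end-point map $\Theta_T : H^1_0 \times L^2(0,T) \to H^1_0$, $\Theta_T(\psi_0, u) = \psi(T)$, which by Part 1 together with a standard fixed-point/Duhamel regularity argument is $C^1$, with $d\Theta_T$ at $(\phi_l, 0)$ in the $u$-direction equal precisely to the linearized solution operator computed above. Because the dynamics preserves the $L^2$-norm, one works on the unit sphere $\sS = \{\|\psi\|_{L^2}=1\}$ in $H^1_0$, a smooth Banach submanifold; the linearized surjectivity (Step 2) onto the tangent space $T_{\phi_l(T)}\sS$, modulo the phase direction $i\phi_l(T)$ which is itself reachable by a constant control (adjusting $u$ by constants changes only the global phase), gives surjectivity of $d\Theta_T$ onto $T_{\phi_l(T)}\sS$ as a map between tangent spaces. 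An application of the inverse mapping theorem (in the form: a $C^1$ map with surjective differential admitting a bounded right inverse is locally onto, with quantitative radius) then produces $\delta > 0$ so that every $\psi_1$ with $\|\psi_1 - \phi_l(T)\|_{H^1_0} < \delta$ and $\|\psi_1\|_{L^2}=1$ is hit from some $\psi_0$ near $\phi_l$; handling the phase bookkeeping and the fact that $\psi_0$ is also allowed to vary (rather than fixed at $\phi_l$) requires composing with the backward flow, which is controllable by time-reversal symmetry $t \mapsto T - t$. I expect the main obstacle to be the careful $H^1_0$ bookkeeping: verifying that $\Theta_T$ is genuinely $C^1$ as a map into $H^1_0$ (not merely $L^2$) when $\mu$ is only piecewise $H^1$ — so that $\mu\psi$ need not lie in $H^1_0$ — and that the multiplier $\psi \mapsto \mu\psi$ interacts correctly with the Duhamel integral to keep the solution in $C([0,T],H^1_0)$; this is where the discontinuities of $\mu \in H^1(A)$ and the precise choice of the $H^1_0$ norm (via the $\phi_k$-coefficients) do real work, and it is the step that distinguishes this result from the classical $H^3$ theory of Beauchard–Laurent.
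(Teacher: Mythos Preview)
Your proposal follows exactly the paper's strategy: well-posedness in $H^1_0$ (Theorem~\ref{well_d}), $C^1$-regularity of the end-point map (Proposition~\ref{smooth_d}), exact controllability of the linearized system via a moment problem and an Ingham-type Riesz basis result (Proposition~\ref{lin-exact-d}, Proposition~\ref{solvability_moment_problem}), then the Inverse Mapping Theorem together with time-reversibility.

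One point needs correcting. You write that condition~\eqref{resonant} ``guarantees the frequencies are pairwise distinct (no two indices $j,k\neq l$ collapse onto the same difference)'', but the frequencies $\lambda_k-\lambda_l=(k^2-l^2)\pi^2$ for $k\in\N^*$ are already pairwise distinct regardless of~\eqref{resonant}, since $k\mapsto k^2$ is injective on $\N^*$. What~\eqref{resonant} actually prevents is $\lambda_j-\lambda_l=-(\lambda_k-\lambda_l)$ for $j,k\neq l$, i.e.\ a coincidence between a frequency and the \emph{negative} of another. This matters because the control $u$ is \emph{real-valued}: for real $u$ one has $\int_0^T u(s)e^{-i\omega s}\,ds=\overline{\int_0^T u(s)e^{i\omega s}\,ds}$, so a coincidence $\omega_j=-\omega_k$ would impose the compatibility constraint $d_j=\overline{d_k}$ on the target moments and the moment map could not be surjective onto the full tangent space. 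The paper's Proposition~\ref{solvability_moment_problem} handles exactly this by working with the enlarged frequency family $\{0,\ \pm(\lambda_k-\lambda_l):k\neq l\}$ and requires that family to consist of distinct reals---which is precisely what~\eqref{resonant} delivers. Your outline is otherwise sound, but without this correct reading of~\eqref{resonant} the surjectivity step of the linearized problem would be incompletely justified.
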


\begin{proof}
The first point of Theorem~\ref{theorema} follows from Theorem~\ref{well_d} while the second is Theorem~\ref{local-exact-d}.
\end{proof}

The first main novelty of Theorem~\ref{theorema} is the well-posedness of~\eqref{0.1_d} in $H^1_0$ even though the operator $M_\mu: \psi \mapsto \mu\psi$ is not continuous in $H^1_0$. The second is the local exact controllability in $H^1_0$ which is larger than the $D(|\Delta|^\frac{3}{2})\subset H^3$ usually considered in this type of results (see for instance~\cite{BL-2010}).

The control result of Theorem~\ref{theorema} holds in the vicinity of an eigensolutions $\phi_l(t)$ with $l\in\N^*$ verifying~\eqref{resonant}: examples of $l$ verifying the condition are
$l\in\{1,\ 2,\ 3,\ 4\}$,
while a mode which is not included in the result is for instance $\phi_5(t)$ since $7^2-5^2=5^2-1^2$ (and then $\phi_{5 n}(t)$ with $n\in\N^*$). Note that $6$, $7$, $8$ or $9$ are other suitable examples for the controllability.
Now, an example of a control potential $\mu$ verifying Theorem~\ref{theorema} when $l=1$ is
$$\mu= {\mathbf 1}_{[0,1/2]}+{\mathbf 1}_{[1/4,3/4]}.$$
Indeed, the estimate~\eqref{1.9_d} is ensured by the relations, for every $k\in\N^*$:
$$|\lag \phi_{2k},\mu\phi_1\rag_{L^2}|= \Big|\frac{2 (-1)^k k}{-\pi + 4 k^2 \pi} \Big|
, $$
$$|\lag \phi_{2k-1},\mu\phi_1\rag_{L^2}|=\Big|\frac{ k \cos(k \pi /2) + (1 - k) \sin(k \pi /2))}{2 k (k-1) \pi}\Big|
.$$

\subsection*{The local exact controllability in the periodic case}
Let us now consider the Schrödinger equation~\eqref{0.1} with $\Omega=(0,1)$ in the case of periodic boundary conditions, which corresponds to the framework of the one-dimensional torus $\T$. We consider the specific case of
$$A(t,x)=-\frac{u_0}{2},\qquad \qquad E(t,x)=u_1(t) \mu(x)-\frac{|u_0|^2}{4},$$
where $u_0\in\R$ and $\mu_1:(0,1)\rightarrow \R$ are fixed and $u_1:\R^+\rightarrow \R$ is the actual control. Under these hypotheses, the Schrödinger equation~\eqref{0.1} reads
 \begin{equation}
 \label{0.1_p}
 \begin{cases}
 	i \p_t \psi =-\Delta\psi+ u_0 P\psi +u_1(t)\mu(x)\psi, & \quad x\in (0,1),\ t>0,\\
  \psi(t,x=0)=\psi(t,x=1),& \quad t>0,\\
  \p_x\psi(t,x=0)=\p_x\psi(t,x=1),& \quad t>0,\\
  \psi_0(t=0)=\psi_0\in L^2((0,1),\C),
\end{cases}
\end{equation}
where $P$ is the momentum operator given by $$P=i\partial_x.$$ We define then a family of eigenfunctions of the Laplacian $\Delta$ under periodic boundary conditions, which forms a Hilbert basis for $L^2((0,1),\C)$, as
$$\phi_k(x)= e^{i 2 k \pi x},\qquad  \forall k\in\Z.$$
The functions $\{\phi_k\}_{k\in\Z}$ are also eigenmodes of the operator $H=-\Delta+ u_0 P$ and they correspond to some eigenvalues $(\lambda_k)_{k\in\Z}$ such that
$$\lambda_k=4 \pi^2k^2 - 2 \pi u_0 k,\qquad  \forall k\in\Z.$$
We consider the Hilbert spaces $H^1_p$ as the space of $H^1$ functions which are periodic, {\it i.e.}
$$H^1_p=\big\{\psi\in H^1((0,1),\C)\,:\, \psi( 0)=\psi( 1)\big\}$$ 
endowed with the norm
$$\|\cdot\|_{H^1_p}=\sqrt{|\lag \phi_0,\cdot\rag_{L^2}|^2+\sum_{k\in\Z^*}^\infty|k \lag \phi_k,\cdot\rag_{L^2}|^2},$$
which is equivalent to the norm $\|\p_x \cdot\|_{L^2}$. Note that $H^1_p$ can be identified with $H^1(\T,\C)$. We introduce the eigensolutions of the periodic Laplacian
$$\phi_l(t) =e^{-i\lambda_l t}\phi_l,\qquad  \forall l\in\Z.$$
We now state our well-posedness and the local exact controllability result for the Schrödinger equation~\eqref{0.1_p}.

\begin{theorem}\label{theoremb}
Let $A:=\{a_j\}_{j\leq N}\subset (0,1)$ with $N\in\N^*$ and $\mu\in H^1(A)$. 

\smallskip

\noindent
{\bf 1.} Let $T>0$, $\psi_0\in H^1_p$, $u_0\in\R$ and $u_1\in L^2( (0,T) ,\R)$. Then, there is a unique mild solution $\psi \in C( [0,T] ,H^1_p)$ to the problem~\eqref{0.1_p} defined by the Duhamel's formula:
$$\psi(t)=e^{-i(-\Delta+u_0P) t} \psi_0-i\int_0^te^{-i(-\Delta+u_0P)(t-s)} u_1(s)\mu \psi(s) d s, \quad t\in [0,T] .$$

\smallskip

\noindent
{\bf 2.} Let $u_0\in \R\setminus 2\pi \Q$ and $l\in\Z$. Assume there exists a constant $C>0$ such that
\begin{equation}\label{1.9}
|\lag \mu \phi_l, \phi_{k}\rag_{L^2}|	\ge \frac{C}{|k|+1},\qquad \qquad  \forall k\in\Z.
\end{equation}
For any $T>0$, there exists $\delta>0$ such that, for~any~$\psi_0, \psi_1 \in H^1_p$ with $\|\psi_0\|_{L^2}=\|\psi_1\|_{L^2}=1$ and
\begin{equation*}
 \|\psi_0-\phi_l\|_{H^1_p}<\delta, \qquad \qquad \qquad  \|\psi_1-\phi_l(T)\|_{H^1_p}<\delta,
\end{equation*}
there exists a control $u_1\in L^2((0,T),\R)$ such that the corresponding solution $\psi$ of  the problem~\eqref{0.1_p} with $\psi(0)=\psi_0$ satisfies  $$\psi(T)= \psi_1.$$
\end{theorem}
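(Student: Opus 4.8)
The plan is to follow the by-now classical scheme of Beauchard--Laurent (adapted to our discontinuous potentials and to the $H^1$ setting, as in the proof of Theorem~\ref{theorema}), but with the crucial extra input that the magnetic term $u_0 P$ removes the spectral resonances. First I would set $H=-\Delta+u_0P$ and consider the map $\psi_0\mapsto \psi(T)$ for the control system~\eqref{0.1_p}; by the well-posedness in part~1, this map is well defined on a neighbourhood of $\phi_l$ in $\{\psi\in H^1_p:\|\psi\|_{L^2}=1\}$. I would then linearise the system around the eigensolution $\phi_l(t)$ with control $u_1\equiv 0$: writing $\psi=\phi_l(t)+\Psi$ and keeping first-order terms, the linearised equation is $i\p_t\Psi=H\Psi+v(t)\mu\,\phi_l(t)$, $\Psi(0)=\Psi_0$, whose solution at time $T$ is
\begin{equation*}
\Psi(T)=e^{-iHT}\Psi_0-i\int_0^T e^{-iH(T-s)}v(s)\,\mu\,\phi_l(s)\,ds.
\end{equation*}
Projecting onto the orthonormal basis $\{\phi_k\}_{k\in\Z}$ and using $\phi_l(s)=e^{-i\lambda_l s}\phi_l$, the $k$-th coefficient of $e^{iHT}\Psi(T)-\Psi_0$ equals $-i\,\overline{\lag\mu\phi_l,\phi_k\rag}_{L^2}\int_0^T v(s)e^{i(\lambda_k-\lambda_l)s}\,ds$, so controllability of the linearised system reduces to a trigonometric moment problem for the frequencies $\omega_k:=\lambda_k-\lambda_l$, $k\in\Z$.

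The heart of the argument is that these frequencies are suitable for a moment problem. Here is where $u_0\notin 2\pi\Q$ enters: since $\lambda_k=4\pi^2k^2-2\pi u_0 k$, we have $\omega_k-\omega_j=4\pi^2(k^2-j^2)-2\pi u_0(k-j)=2\pi(k-j)\big(2\pi(k+j)-u_0\big)$, which vanishes only when $k=j$; thus the $\omega_k$ are pairwise distinct, and the Zeeman-type splitting has destroyed the Dirichlet resonances (no analogue of condition~\eqref{resonant} is needed). Moreover $\omega_k=4\pi^2 k^2-2\pi u_0 k-\lambda_l$ is a gap sequence with $|\omega_{k+1}-\omega_k|\to\infty$, so after discarding finitely many indices the gaps are uniformly bounded below, and one obtains an Ingham-type inequality: there is $T$-dependent $c>0$ with $\sum_k |c_k|^2\le c\int_0^T|\sum_k c_k e^{i\omega_k s}|^2ds$ for the tail, while the finite resonant block (here trivial, or at worst the finitely many close pairs) is handled by the standard Haraux/Kahane perturbation of Ingham's theorem. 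Combining this with the lower bound~\eqref{1.9}, $|\lag\mu\phi_l,\phi_k\rag_{L^2}|\ge C/(|k|+1)$, and the elementary identity $\lag\mu\phi_l,\phi_k\rag_{L^2}$ decaying like $1/(|k|+1)$ from $\mu\in H^1(A)$, the moment map $v\mapsto\big(-i\overline{\lag\mu\phi_l,\phi_k\rag}\int_0^T v e^{i\omega_k s}ds\big)_k$ is seen to be onto from $L^2((0,T),\C)$ to the weighted space $\{(d_k):\sum (|k|+1)^2|d_k|^2<\infty\}$, which, unwinding the norms, is exactly a codimension-controlled complement of $T_{\phi_l}\{\|\psi\|_{L^2}=1\}$ in $H^1_p$. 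The reality constraint $v\in L^2((0,T),\R)$ is accommodated by the usual pairing $\omega_{-k}$ vs.\ $\omega_k$ and by noting that the tangent space to the unit sphere already imposes the compatible Hermitian symmetry; one also checks that the single obstruction $\lag\Psi(T),\phi_l(T)\rag$ (the $L^2$-norm direction) is automatically killed because $\lag\mu\phi_l,\phi_l\rag_{L^2}$ may be assumed real and is absorbed by a phase/rotation, exactly as in~\cite{BL-2010}.

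With the linearised system controllable and surjective onto the relevant closed subspace, I would conclude by the inverse mapping theorem in the spirit of Beauchard--Laurent: the end-point map $\Theta:(\Psi_0,v)\mapsto\psi(T)$ is of class $C^1$ from a neighbourhood of $(\phi_l,0)$ in $H^1_p\times L^2((0,T),\R)$ into the $L^2$-unit sphere of $H^1_p$ — this $C^1$ regularity, including the delicate point that $M_\mu:\psi\mapsto\mu\psi$ is unbounded on $H^1_p$, is exactly the regularity already established in the course of proving Theorem~\ref{theorema} part~1 and reused verbatim here with $H$ in place of $-\Delta$. Its differential at $(\phi_l,0)$ with respect to $(\Psi_0,v)$ restricted to the directions we control is onto the tangent space of the sphere at $\phi_l(T)$, hence $\Theta$ is locally onto; choosing $\delta>0$ small enough yields, for every admissible pair $(\psi_0,\psi_1)$, a control $u_1=v\in L^2((0,T),\R)$ steering $\psi_0$ to $\psi_1$ in time $T$. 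The main obstacle is the moment-problem/surjectivity step: one must verify that the gap sequence $\omega_k$ — which for $|k|$ large behaves like $4\pi^2k^2$ but for small $|k|$ can have arbitrarily small gaps depending on $u_0$ — still admits a bounded biorthogonal family in $L^2((0,T),\C)$ for \emph{every} $T>0$, and that the unbounded weight $1/(|k|+1)$ from~\eqref{1.9} is precisely compensated so that the image is the full $H^1_p$-sized complement; handling the finitely many small gaps (when $u_0$ is close to some $2\pi m$) without losing the arbitrary-time property is the technical crux, resolved by the Haraux one-parameter generalization of Ingham's inequality together with the separation $\omega_k\ne\omega_j$ guaranteed by $u_0\notin2\pi\Q$.
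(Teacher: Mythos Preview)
Your approach matches the paper's: well-posedness (Theorem~\ref{well_p}) plus $C^1$ endpoint map (Proposition~\ref{smooth_p}) plus linearised controllability via a moment problem (Proposition~\ref{lin-exact-p}), concluded by the inverse mapping theorem and time-reversibility. The identification of $u_0\notin 2\pi\Q$ as the mechanism that makes the frequencies $\omega_k=\lambda_k-\lambda_l$ pairwise distinct is exactly right.

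There is, however, a genuine gap in your treatment of the reality constraint. You write that it is ``accommodated by the usual pairing $\omega_{-k}$ vs.\ $\omega_k$'', but this is not the correct pairing: for real $v$ the complex conjugate of $\int_0^T v\,e^{i\omega_k s}\,ds$ is $\int_0^T v\,e^{-i\omega_k s}\,ds$, so one must augment the frequency family by the \emph{negatives} $-\omega_k$, not by $\omega_{-k}$ (and in general $\omega_{-k}\ne -\omega_k$ here). Solvability over $L^2((0,T),\R)$ therefore requires that the enlarged set $\{\omega_k,\,-\omega_k:k\in\Z\}$ consist of distinct numbers with a uniform gap, i.e.\ one needs $\omega_j+\omega_k\ne 0$ for $j,k\ne l$. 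This is precisely the analogue of condition~\eqref{resonant} that you claim is no longer needed; your computation only shows $\omega_j-\omega_k\ne 0$. The missing check is that
\[
\omega_j+\omega_k=4\pi^2(j^2+k^2-2l^2)-2\pi u_0(j+k-2l)
\]
vanishes only when $j=k=l$: if $j+k-2l\ne 0$ this would force $u_0\in 2\pi\Q$, and if $j+k=2l$ then $j^2+k^2-2l^2=2(k-l)^2$ forces $k=l$. This is exactly what the paper isolates in Lemma~\ref{asympt_spec} before invoking Proposition~\ref{solvability_moment_problem}. With this correction your sketch goes through; the paper also differs cosmetically in fixing $\psi_0=\phi_l$ for the endpoint map and recovering general $\psi_0$ by time-reversibility, rather than using a joint map in $(\Psi_0,v)$.
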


\begin{proof}
The first point of Theorem~\ref{theoremb} follows from Theorem~\ref{well_p}, while the second 
is Theorem~\ref{local-exact-p}.
\end{proof}

As for Theorem~\ref{theorema}, there are two main novelties of Theorem~\ref{theoremb}. First, the well-posedness of~\eqref{0.1_p} holds in $H^1_p$ despite the lack of regularity of the function $\mu$. Second, we demonstrate local exact controllability within the same space. Unlike the Dirichlet case, here we are able to control neighborhoods of any eigensolution $\phi_l(t)$ with $l \in \Z$, thanks to the presence of the Zeeman-type effect.

An example of control potential $\mu$ verifying Theorem~\ref{theorema} nearby $\phi_0(t)\equiv 1$ is
$$\mu=x {\mathbf 1}_{[0,1/2]}.$$
Indeed, the identity~\eqref{1.9} follows from the computations, for every $k\in\Z^*$,
$$|\lag \phi_k,\mu\rag_{L^2}|=\Big|\int_0^{1/2} x e^{2 i k \pi x}d x \Big|= \Big|\frac{(-1)^{k+1} + 1 - i k \pi}{4 k^2 \pi^2}\Big|.$$


\subsection*{The key aspects of the proof}
Let us consider the Dirichlet case.
The approach leading to Theorem~\ref{theorema} is inspired by the strategy in~\cite{BL-2010} and consists of the following steps:
\begin{enumerate}
\item We prove the well-posedness in $D(|\Delta|^\frac{n}{2})$ with a suitable $n\in\N^*$, even though $M_\mu:\psi\mapsto \mu\psi$ is not continuous in such a space. 
This prevents the obstruction to the exact controllability of bilinear equations
in~\cite{Ball}, 
which arises in any Banach space $X $ preserved by the dynamics~\eqref{0.1_d} 
whenever $M_\mu$ is a bounded operator in $X$. 
The proof exploits crucially a hidden regularising effect of the dynamics adapted from~\cite{BL-2010}.

\item We prove the local exact controllability in the space $D(|\Delta|^\frac{n}{2})$ when
\begin{equation}\label{Eq:SpectMu}
\lag \mu\phi_l , \phi_{k}\rag_{L^2}\geq C k^{-n},\quad k\in \N^*,
\end{equation}
for some $C>0$.
The result is deduced via the Inverse Mapping Theorem and the global exact controllability of a suitable linearised system. The latter is deduced from the solvability of a moment problem of the form
\begin{align}\label{mome}
x_k = \int_0^{T} e^{i(\lambda_k-\lambda_l)s }u(s)ds,\qquad \ x_k = \frac{i e^{i\lambda_{k} T}\lag \psi,\phi_{k}\rag_{L^2}}{\lag \mu\phi_l ,\phi_k\rag_{L^2}},\ k\in\N^*,
\end{align}
where $\psi$ 
belongs to the controlled space. 
When $\psi \in D(|\Delta|^\frac{n}{2})$ and~\eqref{Eq:SpectMu} is verified, the sequence $(x_k)_{k\in\N^*} \in \ell^2$, and 
an Ingham-type theorem infers the solvability of~\eqref{mome}.
\end{enumerate} 

The matching of the ``spectral'' property~\eqref{Eq:SpectMu} of $\mu$ and the regularity of $D(|\Delta|^\frac{n}{2})$, the space where the well-posedness is ensured, is the core of the strategy leading to local exact controllability.
In~\cite{BL-2010}, for example, the Schrödinger equation~\eqref{0.1_d} is considered with $\mu \in H^3$ and the lack of regularity of the function $\mu$ at the boundaries $x = 0$ and $x = 1$ is tempered by the Dirichlet boundary conditions. Indeed, $M_\mu$ remains continuous in $D(\Delta) = H^2 \cap H^1_0$, and continuity is only lost in higher regularity spaces as $D(|\Delta|^\frac{3}{2})$.
For this reason, in~\cite{BL-2010}, the well-posedness and subsequently the exact controllability 
are proved in $D(|\Delta|^\frac{3}{2})$.
In Theorem~\ref{theorema}, the lack of regularity for the potential $\mu$ arises at interior points due to the choice $\mu \in H^1(A)$. This choice prevents the regularising effect of Dirichlet boundary conditions, and as a result, the operator $M_\mu$ is not continuous on $H^1_0$, which allows us to control within this space.

The local exact controllability in the periodic case is still an open question. A possible reason may be that a bilinear Schrödinger equation
\begin{align}\label{intro_eq}
i\p_t\varphi=-\Delta \varphi +u(t)\mu(x)\varphi, \quad x\in (0,1),\ t>0, \end{align}
cannot be directly addressed with the above strategy when periodic boundary conditions are considered. Indeed, the Laplacian operator has double eigenvalues in this case, and the solvability of the moment problem cannot be directly deduced as in the Dirichlet case.  Theorem~\ref{theoremb} exploits the presence of the momentum operator in~\eqref{0.1_p}, which corresponds to the use of the magnetic field when the problem is viewed from the perspective of~\eqref{0.1}. Specifically, the presence of $P$ in~\eqref{0.1_p} leads to consider the principal operator $H = -\Delta + u_0 P$ and to exploit a Zeeman-type effect on the resonant spectrum. With a suitable choice of $u_0$, the operator $H$ has a simple spectrum, enabling us to apply the strategy introduced for the Dirichlet framework.

It is natural to ask if these techniques can also be applied to the Schrödinger equation~\eqref{intro_eq} in the presence of Neumann boundaries or 
for the quantum harmonic oscillator in the presence of a bilinear control. In the latter case, the equation in $L^2(\mathbb{R}, \mathbb{C})$ to consider is given by:
$$ i\p_t \psi = [-\Delta + x^2] \psi + u(t) \mu(x) \psi, \quad x \in \R, \, t > 0. $$
The proof of well-posedness applies to both problems, respectively in $H^1((0,1), \C)$ and in a suitable subspace of $H^1(\R, \C)$, as detailed in Propositions~\ref{well_n} and~\ref{well_h}. 
Even though the proposed strategy to get local controllability may seem valid at the abstract framework, we are unable to provide explicit examples of control fields satisfying the required hypotheses. 
Further discussion on these aspects can be found in Section~\ref{comments_n} and~\ref{comments_h}. 
Interestingly, the obtructions in the perdioc and Neuman case are related, see Remark~\ref{periodic} below.
In the case of the harmonic oscillator, we can prove a global exact controllability result for a suitable linearised system, see Theorem~\ref{theoremh} below.

 \subsection*{Related bibliography}

The exact controllability of the Schrödinger equation, with bilinear control, is a delicate matter due to the obstruction to the exact controllability proved by Ball, Marsden, and Slemrod~\cite{Ball}. In~\cite{Ball}, they consider bilinear equations of the form $\p_t\varphi = A \varphi + u(t)B\varphi$ in a Banach space $X$, when $A$ generates a $C^0$-semigroup in $X$ and $B$ is a bounded operator in $X$. They show that the reachable set of its dynamics, starting from any state in $X$ and when $u \in L^2_{\text{loc}}$, is contained in a countable union of compact subsets of $X$. When $X$ is infinite-dimensional, this property implies that the reachable set has a dense complement, and therefore we cannot expect exact controllability results in $X$. This property translates to the bilinear Schrödinger equation to the unit sphere of $L^2$, see Turinici~\cite{Turinici}, and to higher regularity spaces or with unbounded operators $B$, see Boussa\"{\i}d, Caponigro and Chambrion~\cite{UP}.

For this reason, it has been necessary to exploit different notions of controllability, such as approximate controllability or exact controllability in higher regularity subspaces of $L^2$ that are not preserved by the control operator.

The global approximate controllability of the bilinear Schrödinger equation has been widely studied in the literature. Many results were proved in the last decades for the controllability in large time. On this subject, we refer to~\cite{Mir09, Ner10} for Lyapunov techniques, while we cite~\cite{BCMS12, BGRS15} for adiabatic arguments and~\cite{BdCC13,BCMS09} for Lie-Galerkin methods. The global approximate controllability in small time has instead been an open problem until the work~\cite{Vahagn} very recently. In this paper, the authors studied the bilinear Schrödinger equation, proving the approximate controllability in small time between the eigenmodes. Initiated by this work, several other results have been provided on small-time approximate controllability; on this subject, we refer to~\cite{ BP04-1,BP04-2, small-time-molecule, coron-2023,small-time-momentum, small-time-wave}.

The turning point for the study of the exact controllability of the bilinear Schrödinger equation was the idea of controlling the equation in suitable subspaces of $D(\Delta)$, introduced by Beauchard in~\cite{Bea05} and comprehensively developed by Beauchard and Laurent in~\cite{BL-2010} with the emphasis of the so-called hidden regularity. Following this approach, several works have been developed to push forward the theory. For local exact controllability results, we refer to~\cite{Duc19}, while we mention~\cite{Duc20b,Mor14, MN15} for simultaneous exact controllability results. The bilinear controllability in the presence of nonlinearities has been studied in~\cite{Karine2, Vahagn2}, and the framework of network-type domains has been explored in~\cite{Duc20,Duc21}. We also refer to the very recent work~\cite{Karine3}, which presents an obstruction to exact controllability in the domain of the Neumann Laplacian in the case of Neumann boundary conditions.

Due to the obstruction from~\cite{Ball}, as emphasised in~\cite{UP}, a key feature of the potential is that it should be unbounded as an operator from the domain of the Laplacian to the domain itself. When a boundary condition is present, it is enough to consider a multiplication operator which does not preserve such a boundary condition (but necessarily at some higher-order derivative). We propose here to consider a multiplication operator which does not preserve the regularity (this allows low-regularity subspaces).    

\subsection*{Scheme of the work}
The paper is organised as follows. In Section~\ref{Dirichlet}, we consider the bilinear Schrödinger equation~\eqref{0.1_d} and prove Theorem~\ref{theorema}. We first show the well-posedness and then prove the local exact controllability. Section~\ref{section_p} is dedicated to~\eqref{0.1_p} and the proof of Theorem~\ref{theoremb}. In the first part of the section, we provide some spectral properties for the problem and establish the well-posedness; the second part is dedicated to the exact controllability result. In Section~\ref{comments_n}, we discuss the well-posedness and some possible open problems involving the bilinear Schrödinger equation with Neumann boundary conditions. In Section~\ref{comments_h}, we discuss the harmonic oscillator and present a result on the global exact controllability for a suitable linearised system. In Appendix~\ref{endpoint}, we prove the $C^1$-regularity for the endpoint maps. Appendix~\ref{mome_appendix} contains two well-known results of solvability of moment problems used in different parts of the present work.

\subsection*{Acknowledgements}
The authors thank Thomas Chambrion, Vahagn Nersesyan, Tak\'eo Takahashi and Laurent Thomann for the fruitful discussions. 

This work has been supported by the EIPHI Graduate School (contract ANR-17-EURE-0002) and by the Bourgogne-Franche-Comté Region and funded in whole or in part by the French National Research Agency (ANR) as part of the QuBiCCS project ``ANR-24-CE40-3008-01''.

\section{The Dirichlet case}\label{Dirichlet}
This section is dedicated to the analysis of the bilinear Schrödinger equation~\eqref{0.1_d}. We start by studying the well-posedness of the equation in $H^1_0$ when $\mu$ is piecewise regular and, in particular, belongs to $H^1(A)$ with $A = \{a_j\}_{j \leq N} \subset (0,1)$ and $N \in \N^*$. Afterwards, we prove the local exact controllability in this framework, and we provide an explicit example of a potential satysfying all the assumptions. The results of this section constitute the proof of Theorem~\ref{theorema}.

\subsection{Well-posedness}
Consider the following linear Schrödinger  equation with a source term:
\begin{equation}
\label{0.1_d_bis}\begin{cases}
 	i \p_t \psi =-\Delta\psi+ u(t)\mu(x)\psi + f(t,x) & \quad x\in (0,1),\ t>0,\\
  \psi(t,x=0)=\psi(t,x=1)=0,& \quad t>0,\\
  \psi_0(t=0)=\psi_0\in L^2((0,1),\C).
\end{cases}
\end{equation}
Recall that $\{\phi_k\}_{k\in\N^*}$, 
$$\phi_k(x)= \sqrt{2}\sin\big({k}\pi x\big),\qquad  x\in (0,1),\, \forall k\in\N^*,$$
is a family of eigenfunctions of the Dirichlet Laplacian forming a Hilbert basis for $L^2$.

We now state the well-posedness of the Schrödinger equation~\eqref{0.1_d_bis} which genralises~\eqref{0.1_d}.
\begin{theorem}\label{well_d}
Let
$N \in \N^*$,
$A = \{a_j\}_{j \leq N} \subset (0,1)$ and
$T>0$.
Let $\mu\in H^1(A)$, $\psi_0\in H^{1}_0$ and $f \in L^2( (0,T) , H^{1}(A))$.
For any $u\in L^2( (0,T) ,\R)$, there exists a unique mild solution $\psi \in C( [0,T] ,H^{1}_0)$ to the problem~\eqref{0.1_d_bis} such that $\psi(0)=\psi_0$ that is a solution to
$$
\psi(t)=e^{i\Delta t} \psi_0-i\int_0^te^{i\Delta(t-s)}(u(s)\mu \psi(s) +  f(s))\,\mathrm{d} s, \quad t\in [0,T].
$$
Moreover, there is a constant~$C_T>0$ such that
$$\|\psi\|_{C( [0,T], H^{1}_0 )}\le C_T \left (\|\psi_0\|_{H^1_0}+\|f\|_{L^2( (0,T) , H^{1}(A))}\right).$$
\end{theorem}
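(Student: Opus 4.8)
The plan is to run a fixed-point argument on the Duhamel map in the space $C([0,T],H^1_0)$, but with the twist that $M_\mu$ does \emph{not} map $H^1_0$ into $H^1_0$. The key to overcoming this is a hidden regularising estimate in the spirit of Beauchard--Laurent~\cite{BL-2010}: although $e^{i\Delta t}$ does not gain regularity pointwise in $t$, the map $g \mapsto \int_0^t e^{i\Delta(t-s)} g(s)\,\mathrm{d}s$ gains half a derivative in a suitable averaged sense. Concretely, I would first establish that, for $g \in L^2((0,T),L^2)$, the function $G(t) := \int_0^t e^{i\Delta(t-s)} g(s)\,\mathrm{d}s$ lies in $C([0,T],H^1_0)$ with $\|G\|_{C([0,T],H^1_0)} \le C_T \|g\|_{L^2((0,T),L^2)}$. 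In Fourier coordinates this is the statement that $\sum_k \big| k \int_0^t e^{i\lambda_k(t-s)} g_k(s)\,\mathrm{d}s \big|^2$ is controlled by $\sum_k \|g_k\|_{L^2(0,T)}^2$, which follows from an Ingham-type / Haraux-type inequality exploiting the gap $\lambda_{k+1}-\lambda_k \sim k$ and the fact that $k \lesssim (\lambda_{k+1}-\lambda_k)$ — this is exactly the regularising phenomenon that lets $L^2$-in-time data produce $H^1$-in-space output.

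Second, I would show that the bilinear term is admissible input for this estimate: if $\psi \in C([0,T],H^1_0)$ and $\mu \in H^1(A)$, then $u\mu\psi \in L^2((0,T),L^2)$ with $\|u\mu\psi\|_{L^2((0,T),L^2)} \le \|u\|_{L^2(0,T)} \|\mu\|_{L^\infty} \|\psi\|_{C([0,T],H^1_0)}$ (using that $H^1(A) \hookrightarrow L^\infty$). Note we only need $L^2$ control of $\mu\psi$, not $H^1_0$ control — this is precisely why the discontinuities of $\mu$ are harmless. Likewise $f \in L^2((0,T),H^1(A)) \subset L^2((0,T),L^2)$, and for the homogeneous part $e^{i\Delta t}\psi_0 \in C([0,T],H^1_0)$ with norm $\|\psi_0\|_{H^1_0}$ since $e^{i\Delta t}$ is unitary on each $H^s$. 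Combining, the Duhamel map $\Phi(\psi)(t) = e^{i\Delta t}\psi_0 - i\int_0^t e^{i\Delta(t-s)}(u(s)\mu\psi(s)+f(s))\,\mathrm{d}s$ sends $C([0,T],H^1_0)$ into itself, and is a contraction on $[0,T']$ for $T'$ small enough that $C_{T'}\|u\|_{L^2(0,T')}\|\mu\|_{L^\infty} < 1$; here $C_{T'} \to 0$ as $T' \to 0$ because the time-integral estimate carries a factor that vanishes with the interval length (or, failing that, one uses $\|u\|_{L^2(0,T')} \to 0$ by absolute continuity of the integral). This gives a unique local mild solution.

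Third, I would globalise to $[0,T]$. The linear structure of~\eqref{0.1_d_bis} (the equation is linear in $\psi$ once $u$ and $f$ are fixed) means there is no blow-up: one iterates the local existence on successive intervals of uniform length, or — cleaner — one proves the a priori bound $\|\psi\|_{C([0,t],H^1_0)} \le C_t(\|\psi_0\|_{H^1_0} + \|f\|_{L^2((0,t),H^1(A))}) + C_t \|u\|_{L^2(0,t)}\|\mu\|_{L^\infty}\|\psi\|_{C([0,t],H^1_0)}$ directly from the Duhamel identity and the regularising estimate, then partitions $[0,T]$ into finitely many pieces on each of which the last coefficient is $<1/2$, chaining the resulting bounds. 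This simultaneously yields the claimed final estimate $\|\psi\|_{C([0,T],H^1_0)} \le C_T(\|\psi_0\|_{H^1_0}+\|f\|_{L^2((0,T),H^1(A))})$. Uniqueness on all of $[0,T]$ follows from uniqueness on each subinterval, or from a Grönwall argument on the difference of two solutions.

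The main obstacle is the first step: establishing the hidden regularity estimate $\|G\|_{C([0,T],H^1_0)} \lesssim \|g\|_{L^2((0,T),L^2)}$ with the right dependence on $T$. One must be careful that the Ingham/Haraux constant is uniform in the number of frequencies and that the endpoint-in-time evaluation (the $\sup_t$ in $C([0,T],H^1_0)$, not just an $L^2_t$ bound) is genuinely controlled — this typically requires either a direct duality/Laplace-transform computation or invoking the sharp version of the Beauchard--Laurent lemma. Everything else (the algebra bound on $\mu\psi$, the contraction, the globalisation) is routine once this estimate is in hand.
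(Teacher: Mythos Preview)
Your first step contains a genuine error: the claimed smoothing estimate
\[
\Big\| \int_0^t e^{i\Delta(t-s)} g(s)\,\mathrm{d}s \Big\|_{C([0,T],H^1_0)} \le C_T\,\|g\|_{L^2((0,T),L^2)}
\]
is \emph{false}. A concrete counterexample: set $g_N(s,x)=N^{-1/2}\sum_{k=1}^N e^{i\lambda_k s}\phi_k(x)$. Then $\|g_N\|_{L^2((0,T),L^2)}^2=T$ for every $N$, whereas the $k$-th Fourier coefficient of $G_N(t)=\int_0^t e^{i\Delta(t-s)}g_N(s)\,\mathrm{d}s$ equals $N^{-1/2}e^{-i\lambda_k t}\,t$, so $\|G_N(t)\|_{H^1_0}^2=\tfrac{t^2}{N}\sum_{k=1}^N k^2\sim t^2 N^2/3\to\infty$. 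The Ingham inequality you invoke controls $\sum_k|\int_0^T e^{i\lambda_k s}h(s)\,\mathrm{d}s|^2$ for a \emph{single} function $h$; it gives no pointwise-in-$k$ decay when each mode carries its own independent data $g_k$, and the growing gap $\lambda_{k+1}-\lambda_k\sim k$ does not help here.

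What the paper actually proves (Lemma~\ref{wellposedness-bound-3.d}) is the weaker---but sufficient---estimate with $g\in L^2((0,T),H^1(A))$ on the right-hand side. The mechanism is an integration by parts on each subinterval $(a_j,a_{j+1})$: for $f\in H^1(A)$ one has
\[
k\,\langle \phi_k,f\rangle_{L^2}
=\text{const}\cdot\langle \cos(k\pi\cdot),\partial_x f\rangle_{L^2}
+\sum_{j}\cos(k\pi a_j)\,\big(\text{traces of }f\text{ at }a_j^\pm\big).
\]
The first piece is already square-summable in $k$ (Parseval for the cosine basis applied to $\partial_x f\in L^2$), so only Cauchy--Schwarz in time is needed. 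The boundary pieces are where the Ingham estimate (Corollary~\ref{well-ingham}) is genuinely used: since $f(\tau,a_j^\pm)$ is the \emph{same} $L^2(0,T)$ function for every $k$ (up to the bounded multiplier $\cos(k\pi a_j)$), one gets $\sum_k\big|\int_0^t e^{i\lambda_k\tau}\cos(k\pi a_j)f(\tau,a_j^\pm)\,\mathrm{d}\tau\big|^2\le C\|f(\cdot,a_j^\pm)\|_{L^2(0,t)}^2$. This is the actual ``hidden regularity'': the jump contributions of the piecewise-$H^1$ data are absorbed by oscillation in time, not by spatial smoothing.

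Consequently, in the fixed-point step you must feed $u\mu\psi\in L^2((0,T),H^1(A))$, not merely $L^2((0,T),L^2)$. This does hold---$H^1$ in one dimension is an algebra, so $\mu\in H^1(A)$ and $\psi(s)\in H^1_0\subset H^1$ give $\mu\psi(s)\in H^1(A)$ with $\|\mu\psi(s)\|_{H^1(A)}\lesssim\|\mu\|_{H^1(A)}\|\psi(s)\|_{H^1_0}$---and with this input your contraction and globalisation arguments go through unchanged.
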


The proof of Theorem~\ref{well_d} is based on the techniques adopted in the proof of~\cite[Proposition 2]{BL-2010}, and the central part is the following lemma.

\begin{lemma}\label{wellposedness-bound-3.d}
Let
$N \in \N^*$,
$A = \{a_j\}_{j \leq N} \subset (0,1)$,
and $f\in L^2( (0,T) , H^{1}(A))$.
The function $G:t\mapsto \int_0^te^{- i\Delta s} f(s) d s$ belongs to $C( [0,T] ,H^1_0)$ and there is a constant $C_T > 0$ such that
\begin{equation*}
\|G \|_{C( [0,T] ,H^1_0)}\le C_T \|f\|_{L^2( (0,T) , H^{1}(A))}.
\end{equation*}
\end{lemma}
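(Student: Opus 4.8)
The plan is to estimate $G(t) = \int_0^t e^{-i\Delta s} f(s)\,ds$ componentwise in the eigenbasis $\{\phi_k\}_{k\in\N^*}$, since $\|G(t)\|_{H^1_0}^2 = \sum_k k^2 |\langle \phi_k, G(t)\rangle_{L^2}|^2$ and $\langle \phi_k, G(t)\rangle_{L^2} = \int_0^t e^{i\lambda_k s}\langle \phi_k, f(s)\rangle_{L^2}\,ds$ with $\lambda_k = k^2\pi^2$. The key point — the hidden regularity effect of \cite{BL-2010} — is that although $f(s)$ is only ``$H^1$ up to the jumps in $A$'', the coefficient $\langle \phi_k, f(s)\rangle_{L^2}$ decays like $1/k$ with a constant controlled by $\|f(s)\|_{H^1(A)}$ rather than by the genuine $H^1$ norm. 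First I would establish this: integrating $\int_0^1 \sqrt 2\sin(k\pi x) f(s,x)\,dx$ by parts on each subinterval $(a_j,a_{j+1})$, the boundary terms at $x=0,1$ vanish by the Dirichlet condition on $\phi_k$, the interior boundary terms at the $a_j$'s are bounded (using the $H^1(A)\hookrightarrow L^\infty$ trace on each piece) times $1/k$, and the remaining term $\tfrac{1}{k\pi}\int \cos(k\pi x)\,\partial_x f(s,x)\,dx$ is $1/k$ times a Fourier-type coefficient of $\partial_x f(s,\cdot)\in L^2$. So $\langle \phi_k, f(s)\rangle_{L^2} = \tfrac{1}{k}c_k(s)$ with $\big(\sum_k |c_k(s)|^2\big)^{1/2}\lesssim \|f(s)\|_{H^1(A)}$, uniformly.

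Next I would feed this into the time integral. We have $k\langle\phi_k,G(t)\rangle_{L^2} = \int_0^t e^{i\lambda_k s}c_k(s)\,ds$, so the family $s\mapsto (c_k(s))_k$ is in $L^2((0,T),\ell^2)$ with norm $\lesssim \|f\|_{L^2((0,T),H^1(A))}$. It remains to bound $\sum_k \big|\int_0^t e^{i\lambda_k s}c_k(s)\,ds\big|^2$ uniformly in $t\in[0,T]$. This is exactly a dual/Ingham-type estimate: since the eigenvalues $\lambda_k = k^2\pi^2$ have gaps $\lambda_{k+1}-\lambda_k = (2k+1)\pi^2 \to\infty$ (in particular they are uniformly separated, $\lambda_{k+1}-\lambda_k \geq 3\pi^2$), the exponentials $\{e^{i\lambda_k s}\}_k$ form a Riesz sequence (Bessel family) in $L^2(0,T)$ for every $T>0$, which gives
\[
\sum_k \Big|\int_0^T e^{i\lambda_k s}g(s)\,ds\Big|^2 \le C_T \|g\|_{L^2(0,T)}^2 .
\]
Applying this with the vector-valued $g = (c_k)_k$ — i.e. summing the scalar inequality against the $\ell^2$ structure, or equivalently testing the Bessel bound coordinatewise and using Minkowski/Fubini — yields $\sup_{t\in[0,T]}\|G(t)\|_{H^1_0}^2 \le C_T \|f\|_{L^2((0,T),H^1(A))}^2$. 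Continuity of $t\mapsto G(t)$ in $H^1_0$ follows from dominated convergence on the tail of the series together with the absolute continuity of each scalar integral $t\mapsto \int_0^t e^{i\lambda_k s}c_k(s)\,ds$; alternatively, approximate $f$ by smooth functions, for which $G$ is clearly continuous, and pass to the limit using the estimate just proved.

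The main obstacle is the first step: making the integration-by-parts bookkeeping over the partition $A$ precise and checking that the constant in $\langle\phi_k,f(s)\rangle_{L^2} = \tfrac1k c_k(s)$ really depends only on $\|f(s)\|_{H^1(A)}$ and not on the positions $a_j$ in a way that could blow up — one must control the $L^\infty$ traces of $f(s,\cdot)$ on each subinterval by $\|f(s)\|_{H^1(a_j,a_{j+1})}$, which is where the piecewise-$H^1$ (rather than merely $L^2$) hypothesis is essential, and one must ensure measurability in $s$ of all quantities involved so that the $L^2((0,T),\ell^2)$ statement is legitimate. Once that decay-with-good-constant is in hand, the Ingham/Bessel step is standard (it is one of the moment-problem results invoked later, e.g. in Appendix~\ref{mome_appendix}).
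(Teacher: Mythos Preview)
Your outline contains the right ingredients (integration by parts over the partition, the Ingham/Bessel bound, Parseval for the derivative piece), but there is a genuine error in how you combine them. The claim
\[
\Big(\sum_{k}|c_k(s)|^2\Big)^{1/2}\lesssim \|f(s)\|_{H^1(A)}
\]
is false. After integration by parts, $c_k(s)=k\langle\phi_k,f(s)\rangle$ splits into the cosine Fourier coefficient $\langle\cos(k\pi\cdot),g(s)\rangle$ of the piecewise derivative $g(s)$ --- these \emph{are} $\ell^2$ in $k$ by Bessel --- and the trace/jump contributions $\cos(k\pi a_j)\,f(s,a_j^\pm)$. The latter are bounded uniformly in $k$ but are \emph{not} square-summable in $k$: whenever $f(s,a_j^\pm)\neq 0$ one has $\sum_k|\cos(k\pi a_j)|^2=\infty$. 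Consequently you cannot package everything into a single $\ell^2$-valued sequence $(c_k(s))_k$ and then ``apply Ingham with the vector-valued $g=(c_k)_k$'': that step, read literally, is just Cauchy--Schwarz coordinatewise and requires exactly the $\ell^2$ finiteness that fails.

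The fix --- and this is precisely what the paper does --- is to treat the two pieces separately. For the trace terms the key structural point you are missing is that the $s$-dependence is a \emph{single} scalar function $h_j(s)=f(s,a_j^\pm)$, the same for every $k$; then the genuine Ingham/Bessel inequality (Corollary~\ref{well-ingham}) applies to that one function:
\[
\sum_k\Big|\cos(k\pi a_j)\int_0^t e^{i\lambda_k s}h_j(s)\,ds\Big|^2
\le \sum_k\Big|\int_0^t e^{i\lambda_k s}h_j(s)\,ds\Big|^2
\le C_T\,\|h_j\|_{L^2(0,T)}^2,
\]
and $\|h_j\|_{L^2(0,T)}\lesssim\|f\|_{L^2((0,T),H^1(A))}$ by the trace bound. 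For the derivative term no Ingham is needed: the trivial estimate $|\int_0^t e^{i\lambda_k s}h(s)\,ds|^2\le t\|h\|_{L^2(0,t)}^2$ followed by Parseval in $x$ gives $\sum_k|\int_0^t e^{i\lambda_k s}\langle\cos(k\pi\cdot),g(s)\rangle\,ds|^2\le t\|g\|_{L^2((0,t),L^2)}^2$. So Ingham is used on the boundary terms (where $c_k$ is \emph{not} $\ell^2$ but the time-function is $k$-independent), and the elementary bound is used on the interior term (where $c_k$ \emph{is} $\ell^2$); you had the roles reversed.
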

\begin{proof}
First, since $e^{-i\Delta s}$ is unitary on $L^2((0,1),\C)$
and $f\in L^2( (0,T) , L^2((0,1),\C))$, $G$ belongs to $C( [0,T] ,L^2((0,1),\C))$.
To prove $G\in C( [0,T], H^1_0 )$ let us use a spectral decomposition. Note that $$G(t)=\sum_{k\in\N^*}\phi_k \int_0^t e^{i\lambda_k s}\lag \phi_k,f(s)\rag_{L^2} d s.$$
Recall $\phi_k=\sqrt{2}\sin(k \pi \cdot)$ are the eigenfunctions of the Dirichlet Laplacian.
Let
$$g:=\big(\p_xf\big)1_{(0,a_1)}+...+\big(\p_xf\big)1_{(a_N,1)} \in L^2\Big( (0,T) , L^2\big((0,1),\C\big)\Big).$$
Now, let us set $a_0:=0$ and $a_{N+1}:=1$. Then
 \begin{align}
 \frac{k\pi }{\sqrt{2}}\lag \phi_k,f (\tau,\cdot)\rag_{L^2}
&= -\lag \p_x\cos(k \pi x),f (\tau,\cdot)\rag_{L^2}\nonumber\\
&= \lag \cos(k \pi x), g(\tau,\cdot)\rag_{L^2}
\nonumber\\
&\,
-\sum_{j=1}^{N+1} \cos(k \pi a_j^-) f (\tau,a_j^-)-\sum_{j=0}^N \cos(k \pi a_j^+)f (\tau,a_j^+).
 \label{well_d_2}
\end{align}
 Consider the terms at each point $x=a_j$: Since $|\cos(k \pi a_j)|\leq 1$ and using Corollary~\ref{well-ingham},
 there exist $C_1,C_2>0$, uniform in $t\in [0,T]$, such that, for $j\in\{0,...,N\}$, we have
\begin{align}
\sum_{k\in\N^*}\Big|\int_0^t e^{i\lambda_k\tau}
\cos(k \pi a_j)f (\tau,a_{j}^+)
d \tau\Big|^2&\leq C_1 \|\cos(k \pi a_j)f (\tau,a_{j}^+)
\|_{L^2((0,t),\C)}^2\nonumber\\
&\leq C_2 \|f \|_{L^2((0,t),C ((a_{j},a_{j+1}),\C))}^2\nonumber\\
&\leq C_2 \|f \|_{L^2((0,t),H^1(A) )}^2
\label{well_d_3}
\end{align}
and similarly, for $j\in\{1,...,N+1\}$,
\begin{align*}
\sum_{k\in\N^*}\Big|\int_0^t e^{i\lambda_k\tau} \cos(k \pi a_j)f (\tau,a_{j}^-) d \tau\Big| ^2&\leq C_2 \|f \|_{L^2((0,t),C ((a_{j-1},a_{j}),\C))}^2\nonumber\\
&\leq C_2 \|f \|_{L^2((0,t), H^1(A) )}^2.
\end{align*}
Therefore, there exists $C_3>0$, uniform in $t\in [0,T]$, such that
 \begin{align*}
\|G (t)\|_{H^1_0}^2
&= \sum_{k\in\N^*}\Big|\int_0^t e^{i\lambda_ks }\,k \lag \phi_k,f (s)\rag_{L^2} d s\Big|^2
\\
&\leq C_3\Big(\sum_{k\in\N^*}\Big|\int_0^t e^{i\lambda_k\tau}\lag \cos(k \pi x), g (\tau,\cdot)\rag_{L^2} d \tau\Big|^2\nonumber\\
&+ \|f \|_{L^2((0,t), H^1(A) )}^2\Big).
\end{align*}
Since for any $h\in L^2(0,t)$,
$$\big|\int_0^t e^{i\lambda_k\tau}h(\tau) d \tau\big|^2\leq t\int_0^t|h(\tau)|^2 d \tau$$
there exists $C_4>0$, uniform in $t\in [0,T]$, such that
\begin{align}
\|G (t)\|_{H^1_0}^2&\leq C_3\Big( t \sum_{k\in\N^*}\int_0^t |\lag \cos(k \pi x), g (\tau,\cdot)\rag_{L^2}^2 d \tau+ \|f \|_{L^2((0,t), H^1(A) )}^2\Big)\nonumber\\
&\leq C_3\Big(t \|g \|_{L^2((0,t), L^2((0,1),\C))}^2+ \|f \|_{L^2((0,t), H^1(A) )}^2\Big)\nonumber\\
&\leq C_4 \|f \|_{L^2((0,t), H^1(A) )}^2.
\label{bound_4}
\end{align}
Therefore, $G \in L^\infty( [0,T],H^1_0)$. Applying the same proof to
\[
G(t')-G(t)=\int_t^{t'}e^{-i\Delta s} f(s)\,\mathrm{d}s
\]
with $0\leq t \leq t' \leq T$, we get
\[
\|G(t')-G(t)\|_{H^1_0}^2\leq C_4 \|f \|_{L^2((t,t'), H^1(A) )}^2.
\]
Hence $G$ is continuous from $[0,T]$ to $H^1_0$.
\end{proof}

\begin{proof}[Proof of Theorem~\ref{well_d}]
Theorem~\ref{well_d} follows from  
the fact that $(e^{i\Delta t})_{t\in \R}$ is a continuous unitary group on $H^1_0$
and
Lemma~\ref{wellposedness-bound-3.d},
by applying the usual fixed point argument to the mapping $L: C( [0,T] ,H^1_0 ) \to C( [0,T] ,H^1_0 )$ defined by
\begin{equation*}
	L(\psi)(t)=e^{i\Delta t} \psi_0-i\int_0^te^{i\Delta(t-s)} \left(u_1(s)\mu \psi(s)  + f(s)\right) d s,
\end{equation*}
for $\psi \in C( [0,T],H^1_0)$.
\end{proof}

Recall that $\phi_l(t)=e^{-i\lambda_l t}\phi_l$ for every $l\in\N^*$. For $T>0$, we consider the orthogonal projector, for the underlying real structure,
$$P_T^l:L^2((0,1),\C)\rightarrow \TT_{\phi_l(T)},$$
where
\begin{align*}
  \TT_{\phi_l(T)}=&\Big\{\psi\in L^2((0,1),\C): \Re\big(\lag\psi,\phi_l(T) \rag_{L^2}\big)=0\Big\}\\
  =&\left\{\psi\in L^2((0,1),\C): \Re\Big(\int_0^1\psi(x)\phi_l(T,x)dx\Big)=0\right\},
  \end{align*}
  is the tangent space to the unit sphere $\sS$ in $L^2((0,1),\C)$ at $\phi_l(T)$. We introduce the end-point map of the solution to~\eqref{0.1_d} as follows:
$$\Psi_T^l:u\in L^2((0,T),\R)
\mapsto P^l_T\big(\psi(T)\big)\in H^1_0\cap \TT_{\phi_l(T)}$$
where $\psi$ is solution to~\eqref{0.1_d} with initial state $\psi_0=\phi_l$ and control $u$.
We are finally ready to ensure the $C^1-$regularity of the end-point map in the space $H^1_0$.

 \begin{proposition} \label{smooth_d}
Let $l\in\N^*$. Let $u\in L^2((0,T),\R) $ and $\psi$ the corresponding solution to~\eqref{0.1_d} in $(0,T)$ with initial state $\phi_l$. The mapping~$\Psi_T^l$  is $C^1$, 
we have $$\partial_{u} \Psi_T^l( u): L^2((0,T),\R)\rightarrow H^1_0\cap \TT_{\phi_l(T)}$$
and for any $v\in L^2((0,T),\R)$, 
$\partial_{u} \Psi_T^l( u)\cdot v=P_T^l(\xi(T)) $, where $\xi$ is the
linearised mild
solution around the solution $\psi$:
\begin{equation*}
 \xi(t)=\int_0^t e^{i \Delta (t-s)}\left(u(s)\mu \xi(s)+ v(s)\mu \psi (s)\right)\, \mathrm{d}s.
\end{equation*}
\end{proposition}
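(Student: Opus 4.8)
The plan is to show that the nonlinear end-point map factors as $\Psi_T^l = P_T^l \circ \mathrm{ev}_T \circ \mathcal{S}$, where $\mathcal{S}: u \mapsto \psi$ is the solution map of~\eqref{0.1_d} with fixed initial datum $\phi_l$, $\mathrm{ev}_T: \psi \mapsto \psi(T)$ is evaluation at time $T$, and $P_T^l$ is the (linear, continuous) orthogonal projector. Since $P_T^l$ and $\mathrm{ev}_T : C([0,T],H^1_0) \to H^1_0$ are bounded linear maps, the whole question reduces to the $C^1$-regularity of $\mathcal{S} : L^2((0,T),\R) \to C([0,T],H^1_0)$ and the identification of its differential. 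The latter is claimed in the excerpt to be proved in Appendix~\ref{endpoint}, so I would either cite that appendix directly or reproduce its core argument, which I sketch below.

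First I would set up the fixed point. For $u \in L^2$, the solution $\psi = \mathcal{S}(u)$ is the unique fixed point of $L_u(\psi)(t) = e^{i\Delta t}\phi_l - i\int_0^t e^{i\Delta(t-s)} u(s)\mu\psi(s)\,ds$ on $C([0,T],H^1_0)$; by Theorem~\ref{well_d} (applied with $f$ built from the $u\mu\psi$ term, using the bound on the Duhamel integral from Lemma~\ref{wellposedness-bound-3.d}) this map is a contraction for $T$ small and iterates over finitely many subintervals for general $T$. The key structural point is that the map $(u,\psi) \mapsto [t \mapsto \int_0^t e^{i\Delta(t-s)} u(s)\mu\psi(s)\,ds]$ is bilinear and continuous from $L^2((0,T),\R) \times C([0,T],H^1_0)$ into $C([0,T],H^1_0)$: continuity here is exactly Lemma~\ref{wellposedness-bound-3.d} with $f(s) = u(s)\mu\psi(s)$, noting that $\mu\psi \in L^2((0,T),H^1(A))$ with norm controlled by $\|u\|_{L^2}\|\psi\|_{C([0,T],H^1_0)}$ since $\mu \in H^1(A)$ and $H^1_0$ multiplies $H^1(A)$ into $H^1(A)$. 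Because $L_u(\psi)$ depends on $(u,\psi)$ through a bounded affine term plus a bounded bilinear term, it is jointly $C^\infty$; the implicit function theorem (the parametrised contraction version, valid since $\partial_\psi L_u(\psi) = -i(u\mu\,\cdot)$ has operator norm $< 1$ on each subinterval, so $\mathrm{Id} - \partial_\psi L_u$ is invertible) then yields that $\mathcal{S}$ is $C^1$ (indeed $C^\infty$).

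Differentiating the fixed-point identity $\psi = L_u(\psi)$ in the direction $v$ gives $\xi := \partial_u\mathcal{S}(u)\cdot v$ as the solution of the linear equation
\begin{equation*}
\xi(t) = -i\int_0^t e^{i\Delta(t-s)}\big(u(s)\mu\xi(s) + v(s)\mu\psi(s)\big)\,ds,
\end{equation*}
which is well-posed in $C([0,T],H^1_0)$ by the same contraction argument (again via Lemma~\ref{wellposedness-bound-3.d}, since $v\mu\psi \in L^2((0,T),H^1(A))$). Composing with $\mathrm{ev}_T$ and $P_T^l$ gives $\partial_u\Psi_T^l(u)\cdot v = P_T^l(\xi(T))$, as stated — up to the overall sign $-i$ in the Duhamel term, which I would align with the convention already fixed in the excerpt. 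Finally, to see that $\Psi_T^l$ and its differential actually land in $\TT_{\phi_l(T)}$: the $L^2$-norm conservation $\|\psi(t)\|_{L^2} = 1$ (standard, since $\mu$ is real and $u$ real-valued) forces $\psi(T) \in \sS$, and differentiating the constraint along the flow shows $\xi(T) \in \TT_{\phi_l(T)}$; but in any case applying $P_T^l$ makes membership automatic, so this point is essentially bookkeeping.

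The genuinely delicate ingredient — and the only place real analysis happens rather than soft functional-analytic machinery — is the continuity of the Duhamel map into $C([0,T],H^1_0)$, i.e. Lemma~\ref{wellposedness-bound-3.d}, because $M_\mu$ fails to be bounded on $H^1_0$; this is precisely the ``hidden regularising effect'' the paper emphasises, and everything in this proposition rests on it. Once that lemma is in hand, the $C^1$-regularity of $\Psi_T^l$ is a formal consequence of the implicit function theorem applied to a smooth (affine-plus-bilinear) fixed-point equation, so I expect no further obstacle.
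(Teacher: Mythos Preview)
Your proposal is correct and reaches the same conclusion as the paper, but by a different route. The paper's proof (Appendix~\ref{endpoint}) proceeds by direct estimates: for differentiability it sets $\varphi = \widetilde\psi - \psi - \xi$, where $\widetilde\psi$ solves~\eqref{0.1_d} with control $u+v$, and applies Theorem~\ref{well_d} iteratively to obtain $\|\varphi\|_{C([0,T],H^1_0)} = O(\|v\|_{L^2}^2)$; for continuity of the differential it writes $\theta = \xi^1 - \xi^2$ for two controls $u^1, u^2$ and bounds $\|\theta\|_{C([0,T],H^1_0)} \leq C\|u^1-u^2\|_{L^2}\|v\|_{L^2}$, giving local Lipschitz dependence. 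You instead invoke the implicit function theorem on the fixed-point equation $\psi = L_u(\psi)$, using that $(u,\psi) \mapsto L_u(\psi)$ is affine-plus-bilinear (hence $C^\infty$) and that $\mathrm{Id} - \partial_\psi L_u$ is invertible by the Volterra structure. Your route is more structural and yields $C^\infty$ regularity for free, while the paper's hands-on computation makes the remainder $O(\|v\|_{L^2}^2)$ and the Lipschitz constant explicit. Both rest on the same analytic core---Lemma~\ref{wellposedness-bound-3.d}---and you correctly isolate the multiplication property $H^1_0 \cdot H^1(A) \subset H^1(A)$ needed to feed $u\mu\psi$ into that lemma.
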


Proposition~\ref{smooth_d} is proved similarly to~\cite[Proposition 3]{BL-2010}, with the adaptation of the space to $H^1_0$ and the application of Theorem~\ref{well_d}. For completeness, we provide the proof in Appendix~\ref{endpoint}.

\subsection{Local exact controllability}
In this part, we focus on the local exact controllability, and we obtain the second point of Theorem~\ref{theorema} as a corollary of the following theorem.
\begin{theorem}\label{local-exact-d}
Let $N\in\N^*$ and $A:=\{a_j\}_{j\leq N}\subset (0,1)$. Let $l\in\N^*$ with
\begin{align*} j^2 - l^2\neq l^2-k^2,\qquad  \forall j,k\in\N^*\setminus\{l\}.\end{align*}
Let  $\mu\in H^1(A)$ such that
\begin{equation*}
\exists C>0, \forall k\in\N^*,
|\lag \mu\phi_l , \phi_{k}\rag_{L^2}|	\ge \frac{C}{k}.
\end{equation*} 
For any $T>0$, there exists $\delta>0$ such that, for~any~$\psi_0, \psi_1 \in H^1_0$ with $\|\psi_0\|_{L^2}=\|\psi_1\|_{L^2}=1$ and
\begin{equation*}
 \|\psi_0-\phi_l\|_{H^1_0}<\delta, \qquad \qquad \qquad  \|\psi_1-\phi_l(T)\|_{H^1_0}<\delta,
\end{equation*}
 there exists a control $u\in L^2((0,T),\R)$ such that the corresponding solution $\psi$ to the problem~\eqref{0.1_d} with $\psi(0)=\psi_0$ satisfies
 $$\psi(T)= \psi_1.$$
\end{theorem}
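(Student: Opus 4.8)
The plan is to deduce the local exact controllability from the global exact controllability of the linearised system via an application of the Inverse Mapping Theorem, following the classical scheme of Beauchard--Laurent. By Proposition~\ref{smooth_d}, the end-point map $\Psi_T^l$ is of class $C^1$ from $L^2((0,T),\R)$ to $H^1_0\cap\TT_{\phi_l(T)}$, with differential at $u=0$ given by $v\mapsto P_T^l(\xi(T))$ where $\xi(t)=\int_0^t e^{i\Delta(t-s)}v(s)\mu\phi_l(s)\,\mathrm{d}s$. The first and main step is therefore to prove that this differential $\mathrm{d}\Psi_T^l(0)$ is surjective onto $H^1_0\cap\TT_{\phi_l(T)}$; once this is done, the Inverse Mapping Theorem (in the form that handles a surjective differential, i.e.\ combined with a right inverse / the implicit function theorem on a complement of the kernel) yields that $\Psi_T^l$ maps a neighbourhood of $0$ in $L^2$ onto a neighbourhood of $\Psi_T^l(0)=\phi_l(T)$ in $H^1_0\cap\TT_{\phi_l(T)}$.

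To establish surjectivity of $\mathrm{d}\Psi_T^l(0)$, I would expand $\xi(T)$ on the Hilbert basis $(\phi_k)_{k\in\N^*}$. Writing $\psi_1-\phi_l(T)=\sum_k y_k \phi_k$ (with the compatibility constraint $\Re(y_l)=0$ coming from the tangent-space condition, so $P_T^l$ just drops the real part of the $l$-th coordinate), the equation $\mathrm{d}\Psi_T^l(0)\cdot v = \psi_1 - \phi_l(T)$ becomes, after pairing with each $\phi_k$ and using $\langle e^{i\Delta(T-s)}\mu\phi_l(s),\phi_k\rangle_{L^2} = e^{i\lambda_k T} e^{i(\lambda_l-\lambda_k)s}\langle\mu\phi_l,\phi_k\rangle_{L^2}$ (up to conjugation conventions), a trigonometric moment problem of the form
\begin{equation*}
\int_0^T e^{i(\lambda_l-\lambda_k)s}v(s)\,\mathrm{d}s = d_k, \qquad d_k = \frac{e^{-i\lambda_k T}\, y_k}{\langle\mu\phi_l,\phi_k\rangle_{L^2}}, \quad k\in\N^*.
\end{equation*}
The resonance hypothesis~\eqref{resonant}, namely $j^2-l^2\neq l^2-k^2$ for all $j,k\neq l$, guarantees that the frequencies $\omega_k:=\lambda_l-\lambda_k = \pi^2(l^2-k^2)$ are pairwise distinct (the only possible collision $\lambda_l-\lambda_j=\lambda_l-\lambda_k$ forces $j=k$, while $\omega_l=0$ is attained only at $k=l$), and moreover that the gaps $|\omega_{k+1}-\omega_k|$ stay uniformly bounded below — this is exactly what is needed for an Ingham-type theorem to apply. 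The spectral assumption~\eqref{1.9_d}, $|\langle\mu\phi_l,\phi_k\rangle_{L^2}|\ge C/k$, combined with $\psi_1\in H^1_0$ (so $(k y_k)\in\ell^2$), ensures $(d_k)_{k\in\N^*}\in\ell^2$. I would then invoke the Ingham-type solvability result quoted in Appendix~\ref{mome_appendix} to produce $v\in L^2((0,T),\R)$ solving the moment problem, taking care of the reality of $v$ by treating the pair of indices giving conjugate frequencies (here $\lambda_l-\lambda_k$ and $\lambda_l-\lambda_{k'}$ are never opposite since all $\omega_k$ are real, so reality is handled by the symmetry $d_{k}\leftrightarrow$ the requirement coming from $\Re(y_l)=0$ at the zero frequency and the structure of the problem) — this gives a bounded right inverse of $\mathrm{d}\Psi_T^l(0)$.

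The step I expect to be the genuine obstacle is making the surjectivity argument fully rigorous at the level of function spaces: one must check that the formal moment problem really does capture the equation $\mathrm{d}\Psi_T^l(0)\cdot v = \psi_1-\phi_l(T)$ \emph{in $H^1_0$}, i.e.\ that the map $v\mapsto \xi(T)$ lands in $H^1_0$ with the right continuity (this is precisely where Theorem~\ref{well_d}/Lemma~\ref{wellposedness-bound-3.d} and the regularising effect for $\mu\in H^1(A)$ are used, rather than the naive bound which fails since $M_\mu$ is unbounded on $H^1_0$), and that the $\ell^2$ bound on $(d_k)$ translates into an $L^2$ bound on $v$ uniformly, so that the right inverse is bounded. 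The rest — verifying the hypotheses of the Inverse Mapping Theorem, splitting $L^2((0,T),\R)$ into the kernel and a complement on which $\mathrm{d}\Psi_T^l(0)$ is an isomorphism onto $H^1_0\cap\TT_{\phi_l(T)}$, and transferring the conclusion from the end-point map with fixed initial datum $\phi_l$ to arbitrary $\psi_0$ near $\phi_l$ (by first steering $\psi_0$ to $\phi_l$-close states, or by a continuity/perturbation argument on the initial condition, using $\|\psi_0\|_{L^2}=1$ to stay on the sphere) — is routine once the linearised controllability is in hand.
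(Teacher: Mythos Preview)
Your approach is essentially identical to the paper's: the paper also proves surjectivity of $\mathrm{d}\Psi_T^l(0)$ by reducing to the same trigonometric moment problem and invoking the Ingham-type result of Appendix~\ref{mome_appendix} (this is exactly Proposition~\ref{lin-exact-d}), and then concludes by the Inverse Mapping Theorem together with the time-reversibility of the Schr\"odinger equation to pass from the fixed initial datum $\phi_l$ to an arbitrary $\psi_0$ close to $\phi_l$. Two small slips to fix: first, $\Psi_T^l(0)=P_T^l(\phi_l(T))=0$, not $\phi_l(T)$; second, the resonance hypothesis~\eqref{resonant} is needed precisely to guarantee that the frequencies $\omega_k=\lambda_k-\lambda_l$ satisfy $\omega_j\neq -\omega_k$ for $j,k\neq l$ (so that the augmented family $\{\pm\omega_k\}$ used in Proposition~\ref{solvability_moment_problem} to produce a \emph{real} control $v$ has no repetitions), not ``since all $\omega_k$ are real''.
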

Theorem~\ref{theoremb} is proved by using the Inverse Mapping Theorem (more precisely the generalised version in the context of Hilbert spaces). To this purpose, we need to ensure the global exact controllability of the linear Schrödinger equation obtained by linearising~\eqref{0.1_d}
around the solution $\phi_{l}(t)$.
As in Proposition~\ref{smooth_d}, the linearised system corresponding to the control $u=0$ and initial state $\phi_l$ reads formally:
\begin{equation}
\label{lin_dbis}
\begin{cases}
i \p_t \xi=-\Delta \xi + v(t)\mu (x) \phi_l(t), & \quad x\in (0,1),\ t>0,\\
\xi(t,0)=\xi(t,1)=0, & \quad t>0,\\
\xi(0,x)=0, & \quad x\in (0,1).\\
\end{cases}
\end{equation}
We now state the global exact controllability of the linearised problem~\eqref{lin_dbis} in the following proposition.
\begin{proposition}\label{lin-exact-d}
Let
$N\in\N^*$
and
$A:=\{a_j\}_{j\leq N}\subset (0,1)$.
Let $l\in\N^*$ with
\begin{align}\label{resonant_d}
j^2 - l^2\neq l^2-k^2,\qquad  \forall j,k\in\N^*\setminus\{l\}.
\end{align}
Let $\mu\in H^1(A)$
such that
\begin{equation*}
\exists C>0,\,
\forall k\in\N^*,\,
|\lag \mu \phi_l, \phi_{k}\rag_{L^2}|	\ge \frac{C}{k}.
\end{equation*}
For any $T>0$ and $\psi_1 \in H_{0}^1\cap \TT_{\phi_l(T)}$, there exists a control $u\in L^2((0,T),\R)$ such that the corresponding mild solution $\xi$ to the problem~\eqref{lin_dbis} with $\xi(0)=0$ satisfies  $$\xi(T)= \psi_1.$$
\end{proposition}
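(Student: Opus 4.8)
The plan is to reduce the exact controllability of the linearised problem~\eqref{lin_dbis} to a moment problem and then solve it via an Ingham-type theorem. First I would write the mild solution $\xi$ to~\eqref{lin_dbis} explicitly in the eigenbasis. Using $\phi_l(t)=e^{-i\lambda_l t}\phi_l$ and the Duhamel formula, projecting onto $\phi_k$ gives
\begin{equation*}
\langle \xi(T),\phi_k\rangle_{L^2}=-i\,e^{-i\lambda_k T}\langle \mu\phi_l,\phi_k\rangle_{L^2}\int_0^T e^{i(\lambda_k-\lambda_l)s}\,v(s)\,\mathrm{d}s,\qquad k\in\N^*.
\end{equation*}
Hence imposing $\xi(T)=\psi_1$ is equivalent to the moment problem
\begin{equation*}
\int_0^T e^{i(\lambda_k-\lambda_l)s}\,v(s)\,\mathrm{d}s=x_k:=\frac{i\,e^{i\lambda_k T}\langle \psi_1,\phi_k\rangle_{L^2}}{\langle \mu\phi_l,\phi_k\rangle_{L^2}},\qquad k\in\N^*.
\end{equation*}
The case $k=l$ is special: there $e^{i(\lambda_k-\lambda_l)s}=1$, and the condition $\psi_1\in\TT_{\phi_l(T)}$ is exactly what makes $x_l$ admissible with a \emph{real} control (it forces $\Re(x_l)=0$, i.e.\ $\Re\langle\psi_1,\phi_l(T)\rangle_{L^2}=0$, while the imaginary part is freely prescribed by $\int_0^T v$). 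I would spell this reality/compatibility bookkeeping out carefully since $v$ must be real-valued.

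Next I would check that $(x_k)_{k\in\N^*}\in\ell^2$. Since $\psi_1\in H^1_0$, we have $(k\langle\psi_1,\phi_k\rangle_{L^2})_{k}\in\ell^2$; combined with the lower bound $|\langle\mu\phi_l,\phi_k\rangle_{L^2}|\ge C/k$, we get $|x_k|\le C^{-1}k\,|\langle\psi_1,\phi_k\rangle_{L^2}|$, which is square-summable. This is precisely the ``matching'' of the spectral hypothesis~\eqref{1.9_d} with the regularity of $H^1_0$ alluded to in the introduction, so the gain of one power of $k$ from $\mu$ exactly compensates the one lost passing from $L^2$ to $H^1_0$.

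Then I would invoke the Ingham-type / Beurling--Malliavin type result for the exponents $\omega_k:=\lambda_k-\lambda_l=(k^2-l^2)\pi^2$, $k\in\N^*$ (one of the moment-problem lemmas in Appendix~\ref{mome_appendix}). The resonance condition~\eqref{resonant_d}, $j^2-l^2\neq l^2-k^2$ for $j,k\neq l$, guarantees that the $\omega_k$ are pairwise distinct (in fact it says no two of them sum to zero in the relevant way, ensuring simplicity of the shifted spectrum), and the gap $|\omega_{k+1}-\omega_k|=(2k+1)\pi^2\to\infty$ gives a uniform gap condition for large $k$; only finitely many small indices need separate handling, which is standard. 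The Ingham theorem then provides, for any $T>0$, a continuous right inverse of the map $v\mapsto\big(\int_0^T e^{i\omega_k s}v(s)\,\mathrm{d}s\big)_k$ from $L^2((0,T),\C)$ onto $\ell^2$; restricting to real $v$ and using the compatibility condition on $x_l$ described above yields a real control $v\in L^2((0,T),\R)$ solving the moment problem, hence $\xi(T)=\psi_1$.

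The main obstacle I anticipate is not the analytic estimates but the careful handling of the index $k=l$ and the real structure: one must verify that the orthogonality constraint defining $\TT_{\phi_l(T)}$ is exactly the solvability condition of the moment problem over real-valued controls, and that the Ingham-type lemma can be applied with the exponent $\omega_l=0$ included (or else that index split off and treated by hand). A secondary point requiring care is confirming that~\eqref{resonant_d} indeed yields the distinctness/gap hypotheses needed by the specific moment-problem lemma cited, rather than merely simplicity of $(\lambda_k)$ itself; once these bookkeeping issues are settled, the rest is a direct application of the cited results together with the well-posedness and $C^1$-regularity already established in Theorem~\ref{well_d} and Proposition~\ref{smooth_d}.
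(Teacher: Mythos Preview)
Your proposal follows exactly the paper's approach: write the Duhamel formula, project onto the eigenbasis to obtain the moment problem, verify $(x_k)\in\ell^2$ via the matching of the $H^1_0$ decay and the lower bound on $\langle\mu\phi_l,\phi_k\rangle$, and invoke the moment-problem lemma from Appendix~\ref{mome_appendix} on the frequencies $\omega_k=\lambda_k-\lambda_l$, using~\eqref{resonant_d} to ensure the hypotheses of Proposition~\ref{solvability_moment_problem} are met.

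One bookkeeping slip: at $k=l$ you write that the tangent-space condition forces $\Re(x_l)=0$. In fact the opposite is needed and is what actually holds. Since $\phi_l$ is real and $\langle\mu\phi_l,\phi_l\rangle\in\R$, the constraint $\Re\langle\psi_1,\phi_l(T)\rangle=0$ makes $x_l=i\langle\psi_1,\phi_l(T)\rangle/\langle\mu\phi_l,\phi_l\rangle$ \emph{real}, which is precisely the compatibility condition $x_l\in\R$ required by Proposition~\ref{solvability_moment_problem} for a real-valued control (there $x_l=\int_0^T v$). Also note that the $\omega_k$ are automatically pairwise distinct; what~\eqref{resonant_d} really provides is $\omega_j\neq-\omega_k$ for $j,k\neq l$, i.e.\ the condition $\lambda_j\neq\pm\lambda_k$ in Proposition~\ref{solvability_moment_problem} for the shifted sequence --- your parenthetical remark captures this, but the first clause is slightly misleading.
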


\begin{proof} Let us start by considering the case $l=1$. We note that 	$$ 	 	 \xi(t)= -i\int_0^te^{i\Delta(t-s)}  v(s) \mu(x) \phi_1(s) d s, \quad t\in [0,T].$$
	We decompose $\xi(T)$ by using the basis $\{\phi_k\}_{k\in\N^*}$ and we obtain
$$\xi(T)=-i\sum_{k\in\N^*}e^{-i\lambda_{k} T} \int_0^T e^{i (\lambda_{k}-\lambda_1) s} v (s)\lag \mu\, \phi_1 ,\phi_{k}\rag_{L^2}\phi_k d s.$$
Now, we project $\xi(T)$ on the elements of basis and, for every $k\in\N^*$,
$$
 \lag \xi(T),\phi_{k}\rag_{L^2}=-i e^{-i\lambda_{k} T}\lag \mu\,\phi_1 ,\phi_k\rag_{L^2}\int_0^T e^{i (\lambda_{k}-\lambda_1) s} v(s) d s,$$
which implies
\begin{align}
\int_0^T e^{i (\lambda_{k}-\lambda_1) s} v(s) ds &= \frac{i e^{i\lambda_{k} T}\lag \xi(T),\phi_{k}\rag_{L^2}}{\lag \mu\,\phi_1 ,\phi_k\rag_{L^2}} ,\qquad  \forall k\in\N^*. \label{MP_d}
\end{align}
Note that, for any $ \xi \in H^1_0\cap \TT_{\phi_1(T)}$, since $\lag \mu ,\phi_k\rag_{L^2}\geq C k^{-1}$ for $C>0$, we have
$$\left( \frac{i e^{i\lambda_{k} T}\lag \xi(T),\phi_{k}\rag_{L^2}}{\lag \mu \phi_1,\phi_k\rag_{L^2}} \right)_{k\in\N^*}\in \ell^2_r(\N^*,\C):=\left\{(x_k)_{k\in \N^*}\in\ell^2:\, x_1\in\R\right\}.$$
For every fixed $T>0$, thanks to Lemma~\ref{asympt_spec}, we apply Proposition~\ref{solvability_moment_problem} to the moment problem~\eqref{MP_d} by considering the sequence $(\lambda_k-\lambda_1)_{k\in\N^*}$. Then, there exists $v\in L^2((0,T),\R)$ such that
$$
\int_0^T e^{i\lambda_{k} s} v(s) d s = \frac{i e^{i\lambda_{k} T}\lag \xi(T),\phi_{k}\rag_{L^2}}{\lag \mu\phi_1 ,\phi_k\rag_{L^2}}, \quad k\in\N^*.
$$
This shows that the mapping $v\in L^2((0,T),\R)\mapsto \xi(T)\in H^1_0\cap T_{\phi_1(T)}$ is surjective and completes the proof when $l=1$. The general case with $l \in \N^*$ is treated in the same way as the previous one, by considering the frequencies $(\lambda_k - \lambda_l)_{k \in \N^*}$. Note that such a sequence satisfies the hypotheses of~\eqref{solvability_moment_problem} thanks to~\eqref{resonant_d}.
\end{proof}
 \begin{proof}[Proof of Theorem~\ref{local-exact-d}]
 	We know that $\phi_l(t)$ is a solution to the problem~\eqref{0.1_d} corresponding to the control $u(t)=0 $. Theorem~\ref{well_d} and Theorem~\ref{smooth_d} imply that the operator $\Psi_T^l:u \mapsto \psi(T)$ is well-defined and $C^1$-smooth in $H^1_0$. Furthermore, for   any $v\in L^2((0,T),\R)$, we have
  $\partial_u\Psi_T^l(u=0)\cdot v=\xi(T)$, where $\xi$ is the mild solution to~\eqref{lin_dbis}. The conditions of Proposition~\ref{lin-exact-d} are satisfied, so the linearised system is exactly controllable. Applying the Inverse Mapping Theorem to the mapping $u\mapsto \Psi_T^l(u)$ and the time-reversibility property of the Schrödinger  equation, we complete the proof.
 \end{proof}

\section{The periodic case}\label{section_p}
In this section, we study the bilinear Schrödinger equation~\eqref{0.1_p} and prove Theorem~\ref{theoremb}. In the first part, we provide some spectral properties for the operator $-\Delta + u_0P$ with $P = i\p_x$ and the well-posedness of the equation in $H^1_p$. Then, we prove the local exact controllability in the same space.

Note that~\eqref{0.1_p} (and then~\eqref{0.1}) can be obtained by applying 
to the Schrödinger equation~\eqref{intro_eq} a gauge transformation. For 
any $f$ smooth, the Laplacian operator is modified to a kind of magnetic Laplacian by the formula 
$$e^{i f(x)}\Delta e^{-i f(x)}~=~ \big(\p_x-i\p_x f\big)^2.$$ Now, the function $\psi= e^{i f(x)}\varphi$, with $\varphi$ solutions of~\eqref{intro_eq}, solves the equation
\begin{equation}\label{0.2}
i \p_t \psi =-\big(\p_x+iA\big)^2 \psi + u(t)\mu(x)  \psi, \quad x\in (0,1),\ t>0,
\end{equation}
where $A=-\p_x f$ is, in a sense, a magnetic field arising from the change of gauge. We consider $f(x)=u_0\ x$ so that $A(x)= -u_0.$ In this case, we can rewrite~\eqref{0.2} in a way very similar to~\eqref{0.1_p}, {\it i.e. }
\begin{equation*}
i \p_t \psi =(-\Delta+ 2u_0 P - u_0^2 ) \psi  +u(t)\mu(x) \psi, \quad x\in (0,1),\ t>0.
\end{equation*}
 From this perspective, the initial problem~\eqref{intro_eq} can be rewritten as a new Schrödinger equation in the presence of a momentum operator, as in~\eqref{0.1_p}. However, the gauge transformation alters the periodic boundary conditions when $u_0\not\in 2\pi \Z$.

\subsection{Spectral properties}

Let us start by denoting, since $u_0$ will be fixed, the principal operator appearing in~\eqref{0.1_p} as
$$
H = -\Delta  + u_0 P.
$$
The operator $H$ is self-adjoint in the domain
$$H^2_p:=\left\{\psi\in H^2((0,1),\C)\,:\, \psi(0)=\psi(1),\, \p_x\psi(0)=\p_x\psi(1)\right\}.$$
and has a purely discrete spectrum given by the eigenvalues
$$
\lambda_k = 4 \pi^2 k^2 - 2 \pi u_0 k, \quad \forall k \in \Z.
$$
The eigenvalues correspond to the eigenfunctions with $k\in\Z$
$$
\phi_k = e^{i 2 k \pi x}, \quad x \in (0,1).
$$
Let $l\in\Z$, define $(\nu_k^l)_{k\in\Z}$ as the increasing sequence such that 
$$\left\{\nu_k^l\mid\,k\in\Z\right\}=\left\{\lambda_l-\lambda_{k},\ 0,\, \lambda_k-\lambda_l\mid\,k\in \Z^*\right\}.$$
Note that in general there may exist $k$ and $j$ distinct such that $\lambda_l-\lambda_{k}=\pm \lambda_l-\lambda_{j}$. Below, we show that for some suitable $u_0$ this is not possible.
\begin{lemma}\label{asympt_spec}
Let $u_0\not\in 2\pi \Q$ and $l\in\Z$. The sequence $(\nu_k^l)_{k\in\Z}$ is such there exists $C>0$
\[
|\nu_{k}^l-\nu_{k-1}^l|\geq C |k|,\quad \forall k\in \Z
\]
and
\begin{equation*}
\inf_{n, m\in\Z} |\nu_{n}^l-\nu_m^l|>0.
\end{equation*}
\end{lemma}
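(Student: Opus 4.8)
The plan is to work directly with the explicit formula $\lambda_k = 4\pi^2 k^2 - 2\pi u_0 k$ and to exploit the irrationality of $u_0/(2\pi)$ to rule out all resonances. First I would reduce both claims to statements about the differences $\lambda_k - \lambda_l$ and $\lambda_l - \lambda_k$. Writing $d_k := \lambda_k - \lambda_l = 4\pi^2(k^2 - l^2) - 2\pi u_0(k-l)$, the set indexing $(\nu^l_k)$ is $\{0\}\cup\{d_k : k\neq l\}\cup\{-d_k : k\neq l\}$. The gap condition $|\nu^l_k - \nu^l_{k-1}|\geq C|k|$ should follow from an asymptotic analysis: for $|k|$ large, $d_k \sim 4\pi^2 k^2$, so the values $d_k$ (and $-d_k$) are spaced like $k^2 - (k-1)^2 \sim 2k$, which already gives the linear lower bound on consecutive gaps of the sorted sequence once one checks that the positive part $\{d_k\}$ and the negative part $\{-d_k\}$ do not interleave badly near $0$ — only finitely many terms are near $0$, and those are handled by the separation estimate below. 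The cleanest route is: show $\inf_{k\neq j}|d_k - d_j| > 0$ and $\inf_{k\neq j}|d_k + d_j| > 0$ and $\inf_{k\neq l}|d_k| > 0$; together with the quadratic growth $d_k \to \pm\infty$ like $k^2$, standard bookkeeping on how a union of two quadratically-growing sequences is sorted yields both the linear gap bound and the positive infimum of all pairwise gaps.

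The heart of the matter is therefore the three separation estimates. Consider $d_k - d_j = 4\pi^2(k^2 - j^2) - 2\pi u_0(k - j) = 2\pi(k-j)\bigl(2\pi(k+j) - u_0\bigr)$. For $k\neq j$ this vanishes only if $u_0 = 2\pi(k+j)$, i.e. $u_0 \in 2\pi\Z$, which is excluded; moreover $|d_k - d_j| = 2\pi|k-j|\,|2\pi(k+j) - u_0| \geq 2\pi\,\mathrm{dist}(u_0, 2\pi\Z) =: c_1 > 0$ since $k+j$ ranges over $\Z$ and $|k-j|\geq 1$. Similarly $d_k + d_j = 4\pi^2(k^2 + j^2 - 2l^2) - 2\pi u_0(k + j - 2l)$; if $k + j = 2l$ this equals $4\pi^2(k^2+j^2-2l^2)$, a nonzero multiple of $4\pi^2$ bounded below by $4\pi^2$ whenever $(k,j)\neq(l,l)$ (here one uses $k\neq l$ hence $k+j=2l \Rightarrow j\neq l$ and $k^2+j^2 > 2l^2$), while if $k+j \neq 2l$ we factor again and bound below by $2\pi\,\mathrm{dist}(u_0,2\pi\Q)$ — more carefully, $|d_k+d_j| = 2\pi|k+j-2l|\cdot|2\pi(k+j+\text{something}) - u_0|$ after completing the square, and the irrationality of $u_0/2\pi$ forces this away from zero; I would package this as: $d_k + d_j = 0$ would give $u_0 = 2\pi \cdot \frac{k^2+j^2-2l^2}{k+j-2l} \in 2\pi\Q$, contradiction, and then a compactness/asymptotic argument (the expression is a nonzero element of the discrete-ish set $2\pi\Z + 2\pi\Z u_0$ intersected with a bounded region only finitely often) gives a uniform positive lower bound. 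Finally $d_k = 2\pi(k-l)(2\pi(k+l) - u_0)$ is nonzero for $k\neq l$ and bounded below by $2\pi\,\mathrm{dist}(u_0, 2\pi\Z)$ by the same factorization.

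The main obstacle I anticipate is the $d_k + d_j$ estimate when $k + j \neq 2l$: unlike $d_k - d_j$, the factorization does not immediately produce a quantity of the form $|m - u_0|$ with $m\in 2\pi\Z$, so one cannot just invoke $\mathrm{dist}(u_0, 2\pi\Z)>0$; instead one gets something like $|2\pi m + 2\pi m' u_0|$ with $m, m'\in\Z$ not both zero, and the uniform lower bound over all such $(m,m')$ arising is false in general (it is $0$ by density!). The resolution is that $m'$ and $m$ are not arbitrary: $m' = -(k+j-2l)$ and $m = 2\pi(k^2+j^2-2l^2)/(2\pi) = \dots$ are linked, and crucially the ratio $m/m'$ is a bounded perturbation of $2\pi(k+j)$, which is large when $|k|+|j|$ is large — so for $|k|+|j|$ large, $|d_k + d_j|$ is itself large (it grows like $k^2$), and only finitely many pairs $(k,j)$ have $|d_k+d_j|$ bounded; among those finitely many, each is individually nonzero by the irrationality argument, so the infimum over them is a positive minimum. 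I would therefore split every one of the three estimates into "finitely many small-index pairs, handled by nonvanishing + finiteness" and "large-index pairs, handled by quadratic growth", and state this splitting explicitly as the organizing principle of the proof.
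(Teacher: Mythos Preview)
Your proposal is correct and follows essentially the same approach as the paper: use the factorization of $\lambda_n\pm\lambda_m$ together with the irrationality of $u_0/(2\pi)$ to show that all relevant spectral differences are nonzero, invoke the quadratic growth of $\lambda_k$ to get the linear gap for large indices, and absorb the finitely many remaining indices into the constant. Your treatment is in fact more explicit than the paper's---you cleanly separate the three cases $d_k-d_j$, $d_k+d_j$, $d_k$, give the direct uniform bound $2\pi\,\mathrm{dist}(u_0,2\pi\Z)$ in the first and third via factorization, and correctly isolate the second as the only case genuinely requiring the large/small split---whereas the paper states the growth estimate and the nonvanishing in a few lines and leaves the bookkeeping implicit.
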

\begin{proof}
Let us first consider $l=0$.  
For every $m,n\in\Z$ 
\begin{align}
\lambda_n\pm \lambda_m&= 2\pi^2(2\pi (n^2\pm m^2)- {u_0}(n\pm m))\nonumber\\
&=\begin{cases}2\pi^2\big(2\pi (n^2+ m^2)- {u_0}(n+ m)\big),\\
2\pi^2\big(2\pi (n+ m)- {u_0}\big)(n- m).\end{cases}
\label{ll}
\end{align}
The choice of $u_0$ ensures that for distinct $m,n\in\Z$ 
$\lambda_n$ and $\lambda_m$ are distinct
Then note that, for distinct $m,n\in\Z^*$ such that $|m|,|n|$ are large enough, we have $|\lambda_n\pm \lambda_m|\geq C(|n|+|m|)$ and then $|\nu_{k}^0-\nu_{k-1}^0|\geq C|k|$ for some $C>0$ with $|k|$ sufficiently large.
This prove the result when $l=0$. 

The general case $l\in\Z$ is proved equivalently by considering that
$$(\lambda_n-\lambda_l)\pm (\lambda_m-\lambda_l)= 2\pi^2\Big(2\pi \big(n^2\pm m^2-(l^2\pm l^2)\big)- {u_0}\big(n\pm m -(l\pm l)\big)\Big).$$ The last quantity is always different from $0$ when $n,m\in \Z^*\setminus\{l\}$. Indeed, either we obtain one of the identities in~\eqref{ll}, or
$$2\pi^2\big(2\pi (n^2+ m^2-2 l^2)\big)- {u_0}\big(n+ m - 2l)\big)$$
which is equal to $0$ only when $(n-m)^2= 0$ and $2l= n+m$ for every $m,n\in\Z\setminus\{l\}$. The last relations are both valid only when $m=n=l$ and then $(\nu_k^l)_{k\in \Z}$ is composed by distinct numbers.
The remaining part of the proof follows as in the case $l=0$.
\end{proof}

Lemma~\ref{asympt_spec} implies the simplicity of the spectrum of the operator $H$ for specific $u_0$, in contrast to the Laplacian with periodic boundary conditions, which exhibits double eigenvalues. The presence of $P$ induces a Zeeman-type effect on the spectrum, allowing us to avoid the occurrence of double eigenvalues.

\subsection{Well-posedness}

Recall that $H= -\Delta +u_0 P$ and we consider the following linear Schrödinger equation with a source term:
\begin{equation}
\label{0.1_p_bis}
\begin{cases}
 i \p_t \psi =H\psi +u_1(t)\mu(x)\psi+ f(t,x), & \quad x\in (0,1),\ t>0,\\
 \psi(t,x=0)=\psi(t,x=1),& \quad t>0,\\
 \p_x\psi(t,x=0)=\p_x\psi(t,x=1),& \quad t>0,\\
 \psi_0(t=0)=\psi_0\in L^2((0,1),\C),
\end{cases}
\end{equation}

We now present the well-posedness of~\eqref{0.1_d_bis} (and then~\eqref{0.1_p}) to ensure the first point of Theorem~\ref{theoremb}.
\begin{theorem}\label{well_p}
Let $N \in \N^*$, $A = \{a_j\}_{j \leq N} \subset (0,1)$ and $T>0$. 
Let $\mu\in H^1(A)$, $\psi_0\in H^{1}_p$, and $f \in L^2( (0,T) , H^{1}(A))$.
Then, for any $u\in L^2( (0,T) ,\R)$, there exists a unique mild solution $\psi \in C( [0,T] ,H^{1}_p)$ to the problem~\eqref{0.1_p_bis} such that $\psi(0)=\psi_0$ that is a solution to
$$
 \psi(t)=e^{-iH t} \psi_0-i\int_0^te^{-iH(t-s)}(u(s)\mu \psi(s) + f(s))\,\mathrm{d}s, \quad t\in [0,T] .
$$ Moreover, there is a constant~$C_T>0$ such that
$$\|\psi\|_{C( [0,T] ,H^{1}_p )}\le C_T \left (\|\psi_0\|_{H^1_p}+\|f\|_{L^2( (0,T), H^1(A))}\right).$$
\end{theorem}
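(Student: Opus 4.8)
The plan is to reduce Theorem~\ref{well_p} to the analogue of Lemma~\ref{wellposedness-bound-3.d} for the operator $H = -\Delta + u_0 P$, and then close the argument with the same fixed-point scheme already used in the proof of Theorem~\ref{well_d}. First I would observe that $(e^{-iHt})_{t\in\R}$ is a strongly continuous unitary group on $L^2((0,1),\C)$ which is diagonal in the basis $\{\phi_k\}_{k\in\Z}$, $\phi_k=e^{i2k\pi x}$, acting by $e^{-iHt}\phi_k = e^{-i\lambda_k t}\phi_k$ with $\lambda_k = 4\pi^2 k^2 - 2\pi u_0 k$. Since the weights defining $\|\cdot\|_{H^1_p}$ are $|k|$ for $k\neq 0$ (and $1$ for $k=0$) and $H$ is diagonal in the same basis, the group $(e^{-iHt})_{t\in\R}$ restricts to a strongly continuous unitary group on $H^1_p$ as well, exactly as $(e^{i\Delta t})$ does on $H^1_0$. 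Likewise the multiplication operator $M_\mu:\psi\mapsto\mu\psi$ maps $H^1_p$ into $H^1(A)$ continuously — this is the same statement on multiplication by a piecewise-$H^1$ function that is used implicitly in the Dirichlet case, and it does not see the boundary conditions.

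The heart of the matter is the periodic version of Lemma~\ref{wellposedness-bound-3.d}: for $f\in L^2((0,T),H^1(A))$ the map $G(t) = \int_0^t e^{iHs} f(s)\,ds$ lies in $C([0,T],H^1_p)$ with $\|G\|_{C([0,T],H^1_p)}\le C_T\|f\|_{L^2((0,T),H^1(A))}$. I would prove this by the same spectral decomposition. Write $G(t) = \sum_{k\in\Z}\phi_k\int_0^t e^{i\lambda_k s}\langle\phi_k,f(s)\rangle_{L^2}\,ds$ and, for $k\neq 0$, integrate by parts in $x$ across the discontinuities $a_1<\dots<a_N$ (together with the periodic endpoints $0$ and $1$, which here do \emph{not} produce vanishing boundary terms but instead a telescoping contribution $f(\tau,1^-)-f(\tau,0^+)$ times a bounded trigonometric factor). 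This yields, up to the harmless factor $2k\pi$, an expression of the form $\langle$ bounded trig function $, g(\tau,\cdot)\rangle_{L^2}$ minus finitely many point evaluations $f(\tau,a_j^\pm)$ weighted by $|e^{\pm i 2k\pi a_j}|=1$, where $g = (\partial_x f)$ restricted piecewise. The key ingredient is then the Ingham-type estimate (Corollary~\ref{well-ingham}, whose hypotheses on the frequencies $(\lambda_k)_{k\in\Z}$ are guaranteed precisely by Lemma~\ref{asympt_spec} applied with $l=0$, since $|\lambda_k-\lambda_{k-1}|\ge C|k|$ and the gaps are bounded below): it controls $\sum_k |\int_0^t e^{i\lambda_k\tau} h_k(\tau)\,d\tau|^2$ by $\sum_k \|h_k\|_{L^2(0,t)}^2$ for the point-evaluation terms, and the Cauchy–Schwarz bound $|\int_0^t e^{i\lambda_k\tau}h(\tau)\,d\tau|^2\le t\int_0^t|h|^2$ together with Bessel's inequality for the $\langle$trig$,g\rangle$ term. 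Summing gives $\|G(t)\|_{H^1_p}^2\le C_T\|f\|_{L^2((0,t),H^1(A))}^2$; the $k=0$ mode is trivially bounded by $\|f\|_{L^2((0,T),L^2)}$. Continuity in $t$ follows by applying the same estimate to $G(t')-G(t)=\int_t^{t'}e^{iHs}f(s)\,ds$.

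With this lemma in hand, Theorem~\ref{well_p} follows verbatim from the Dirichlet argument: the map $L(\psi)(t)=e^{-iHt}\psi_0 - i\int_0^t e^{-iH(t-s)}(u(s)\mu\psi(s)+f(s))\,ds$ is a contraction on $C([0,\tau],H^1_p)$ for $\tau$ small (using that $\|u(s)\mu\psi(s)\|_{H^1(A)}\le |u(s)|\,\|\mu\|_{H^1(A)}\,\|\psi(s)\|_{H^1_p}\cdot$const and $\|u\|_{L^2(0,\tau)}\to 0$ as $\tau\to0$), giving local existence and uniqueness; iteration over $[0,T]$ and the linear estimate from Lemma together give the global bound $\|\psi\|_{C([0,T],H^1_p)}\le C_T(\|\psi_0\|_{H^1_p}+\|f\|_{L^2((0,T),H^1(A))})$.

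I expect the main obstacle to be purely bookkeeping: unlike the Dirichlet case, the periodic endpoint terms in the integration by parts do \emph{not} vanish, so one must check that the combined boundary contribution $\sum_j \cos$-type$(k\pi a_j)\,[\cdot]$ — now genuinely complex exponentials $e^{\pm i2k\pi a_j}$ — still has modulus uniformly bounded in $k$ (it does, being a finite sum of unimodular terms), so that the Ingham estimate applies unchanged. One should also double-check that the frequency sequence relevant to $G$ is $(\lambda_k)_{k\in\Z}$ itself (not a shifted one), and that Lemma~\ref{asympt_spec} with $l=0$ indeed delivers both the uniform gap lower bound $|\lambda_k-\lambda_{k-1}|\ge C|k|$ and $\inf_{k\neq j}|\lambda_k-\lambda_j|>0$ needed to invoke Corollary~\ref{well-ingham}; this is exactly what that lemma provides, so no genuinely new analytic difficulty arises beyond the one present in the Dirichlet setting.
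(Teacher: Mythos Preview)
Your proposal is correct and follows essentially the same approach as the paper: establish the periodic analogue of Lemma~\ref{wellposedness-bound-3.d} (which is exactly Lemma~\ref{wellposedness-bound-3.p}) via spectral decomposition, integration by parts across the discontinuities, and Corollary~\ref{well-ingham} for the boundary terms, then close with the fixed-point argument of Theorem~\ref{well_d}. One small over-reach: you invoke Lemma~\ref{asympt_spec} (which assumes $u_0\notin 2\pi\Q$) to verify the gap hypothesis of Corollary~\ref{well-ingham}, but Theorem~\ref{well_p} is stated for arbitrary $u_0\in\R$; the fix is immediate, since for any $u_0$ the \emph{distinct} values of $(\lambda_k)_{k\in\Z}$ have a uniform gap and each value is attained at most twice, so Corollary~\ref{well-ingham} still applies after grouping (the paper's own proof glosses over this point as well).
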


The proof of Theorem~\ref{well_p} is based on the same strategy of the proof of Theorem~\ref{well_d}. We just retrace the main steps for the sake of completeness. As in the previous case, we need to introduce the following intermediate lemma.

\begin{lemma}\label{wellposedness-bound-3.p}
Let $N \in \N^*$, $A = \{a_j\}_{j \leq N} \subset (0,1)$, and $f\in L^2( (0,T) , H^{1}(A))$. 
Then, the function $G:t\mapsto \int_0^te^{iHs} f(s)\,\mathrm{d}s$ belongs to $C( [0,T] ,H^1_p)$ and there is a constant $C_T > 0$ such that
\begin{equation*}
\|G \|_{C( [0,T] ,H^1_p)}\le C_T \|f\|_{L^2( (0,T) , H^1(A))}.
\end{equation*}
\end{lemma}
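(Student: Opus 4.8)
The plan is to mirror the proof of Lemma~\ref{wellposedness-bound-3.d}, replacing the Dirichlet eigenbasis $\phi_k=\sqrt2\sin(k\pi\cdot)$ with the periodic exponential basis $\phi_k=e^{i2k\pi x}$ and the Dirichlet propagator $e^{i\Delta s}$ with $e^{iHs}$, where $H=-\Delta+u_0P$. First I would note that $e^{iHs}$ is unitary on $L^2((0,1),\C)$, so $G\in C([0,T],L^2((0,1),\C))$ follows immediately from $f\in L^2((0,T),L^2)$. For the $H^1_p$ bound I would pass to the spectral decomposition: since $e^{iHs}\phi_k=e^{-i\lambda_k s}\phi_k$ with $\lambda_k=4\pi^2k^2-2\pi u_0k$, one has
\[
G(t)=\sum_{k\in\Z}\phi_k\int_0^t e^{i\lambda_k s}\lag\phi_k,f(s)\rag_{L^2}\,\mathrm{d}s,
\]
and the $H^1_p$-norm is controlled by $\sum_{k\in\Z}|k|^2\big|\int_0^t e^{i\lambda_k s}\lag\phi_k,f(s)\rag_{L^2}\,\mathrm{d}s\big|^2$ plus the $k=0$ term, which is harmless since $e^{iHs}$ is $L^2$-unitary.

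Next I would integrate by parts in $x$ on each subinterval $(a_j,a_{j+1})$ (with $a_0=0$, $a_{N+1}=1$) to produce the analogue of~\eqref{well_d_2}: $2\pi i k\lag\phi_k,f(\tau,\cdot)\rag_{L^2}$ equals an interior term $-\lag\phi_k,g(\tau,\cdot)\rag_{L^2}$, where $g=(\p_xf)1_{(0,a_1)}+\dots+(\p_xf)1_{(a_N,1)}\in L^2((0,T),L^2)$, plus boundary contributions $\sum_j e^{2\pi i k a_j^\pm}f(\tau,a_j^\pm)$. Crucially the outer endpoint contributions at $x=0$ and $x=1$ combine as $e^{0}f(\tau,0^+)-e^{2\pi i k}f(\tau,1^-)=f(\tau,0^+)-f(\tau,1^-)$, which is bounded by $\|f(\tau,\cdot)\|_{H^1(A)}$ (the periodic boundary condition need not hold for $f$, but the term is still controlled by the trace estimate, and even if $f$ were periodic it would simply vanish). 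For each interior jump point and each endpoint, since $|e^{2\pi i k a_j}|=1$, I would invoke the Ingham-type estimate of Corollary~\ref{well-ingham} — which applies because by Lemma~\ref{asympt_spec} the frequencies $(\lambda_k)_{k\in\Z}$ are uniformly separated with gaps growing like $|k|$ — to bound $\sum_{k\in\Z}|\int_0^t e^{i\lambda_k\tau}e^{2\pi i k a_j}f(\tau,a_j^\pm)\,\mathrm{d}\tau|^2$ by $C\|f(\cdot,a_j^\pm)\|_{L^2((0,t))}^2\le C'\|f\|_{L^2((0,t),H^1(A))}^2$, using the continuous embedding $H^1((a_{j-1},a_j))\hookrightarrow C([a_{j-1},a_j])$. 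For the interior term I would use the elementary inequality $|\int_0^t e^{i\lambda_k\tau}h(\tau)\,\mathrm{d}\tau|^2\le t\int_0^t|h(\tau)|^2\,\mathrm{d}\tau$ together with Bessel's inequality for the orthonormal family $\{\phi_k\}$ to get the bound $t\|g\|_{L^2((0,t),L^2)}^2$. Combining these yields $\|G(t)\|_{H^1_p}^2\le C_T\|f\|_{L^2((0,t),H^1(A))}^2$, hence $G\in L^\infty([0,T],H^1_p)$.

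Finally, applying the same estimate to $G(t')-G(t)=\int_t^{t'}e^{iHs}f(s)\,\mathrm{d}s$ gives $\|G(t')-G(t)\|_{H^1_p}^2\le C_T\|f\|_{L^2((t,t'),H^1(A))}^2$, so absolute continuity of the integral yields continuity of $G$ from $[0,T]$ to $H^1_p$.

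The main obstacle, and the only genuine difference from the Dirichlet case, is the Ingham-type step: in the Dirichlet setting the frequencies are $\lambda_k=k^2\pi^2$ with obvious uniform gaps, whereas here one must be sure the periodic frequencies $\lambda_k=4\pi^2k^2-2\pi u_0k$ remain uniformly separated — the linear term $-2\pi u_0k$ could in principle create near-collisions. This is exactly what Lemma~\ref{asympt_spec} (for $l=0$, equivalently the raw sequence $(\lambda_k)_{k\in\Z}$) provides: $|\lambda_k-\lambda_{k-1}|\ge C|k|$ and $\inf_{n\ne m}|\lambda_n-\lambda_m|>0$, so Corollary~\ref{well-ingham} is applicable with a uniform constant. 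Everything else is a verbatim transcription of the proof of Lemma~\ref{wellposedness-bound-3.d}.
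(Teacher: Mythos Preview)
Your proposal is correct and follows exactly the approach the paper takes (the paper's own proof is a two-line reference back to Lemma~\ref{wellposedness-bound-3.d}). One minor caveat: you justify the separation of the frequencies $(\lambda_k)_{k\in\Z}$ by invoking Lemma~\ref{asympt_spec}, but that lemma carries the hypothesis $u_0\notin2\pi\Q$, whereas Lemma~\ref{wellposedness-bound-3.p} and Theorem~\ref{well_p} are stated for arbitrary $u_0\in\R$. The gap condition for Corollary~\ref{well-ingham} is in fact elementary and does not need Lemma~\ref{asympt_spec}: since $\lambda_k-\lambda_j=2\pi(k-j)\big(2\pi(k+j)-u_0\big)$, one has $\inf_{k\neq j}|\lambda_k-\lambda_j|\geq 2\pi\,\mathrm{dist}(u_0,2\pi\Z)>0$ whenever $u_0\notin2\pi\Z$; and if $u_0\in2\pi\Z$ the eigenvalues have multiplicity exactly two, so a trivial grouping reduces to the simple case. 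The paper's proof does not spell this out either.
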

\begin{proof} 
The proof follows the same strategy as for Lemma~\ref{wellposedness-bound-3.d} and uses the spectral decomposition $G(t)=\sum_{k\in\Z}\phi_k \int_0^t e^{i\lambda_k\tau}\lag \phi_k,f(\tau)\rag_{L^2} \,\mathrm{d}\tau$. Recall that
\begin{equation*}
\|G(t)\|_{ H^1_p }^2= \Big|\int_0^t e^{i\lambda_0s}\lag \phi_0,f (s)\rag_{L^2}\,\mathrm{d}s\Big|^2 + \sum_{k\in\Z^*}\Big|\int_0^t e^{i\lambda_ks}k \lag \phi_k,f (s)\rag_{L^2}\,\mathrm{d}s\Big|^2.
\end{equation*}
The central part consists in estimating $k\lag \phi_k, f(\tau, \cdot)\rag_{L^2}$ for each $k\in\Z^*$ for which the relation~\eqref{well_d_2} is verified. We estimate the boundary terms again as in~\eqref{well_d_3} by using Corollary~\ref{well-ingham}. As in~\eqref{bound_4}, there exists $C > 0$ uniform in $[0, T]$ such that $\|G (t)\|_{H^1_p}\leq C \|f \|_{L^2((0,t), H^1(A)}.$ Finally, $G \in C( [0,T], H^1_p)$ thanks to absolute continuity of the integral.
\end{proof}

\begin{proof}[Proof of Theorem~\ref{well_p}]
The proof exploits the same fixed-point arguments introduced in the proof of Proposition~\ref{well_d}. The only difference is that we need to substitute the space $H^1_0$ with $H^1_p$ and use Lemma~\ref{wellposedness-bound-3.p} instead of Lemma~\ref{wellposedness-bound-3.d}.
\end{proof}

Recall that $\phi_l(t)=e^{-i\lambda_l t}\phi_l$ with $l\in\Z$. We introduce for $T>0$ the orthogonal projector 
for the underlying real structure
$P_T^l:L^2((0,1),\C)\rightarrow \TT_{\phi_l(T)}$ where
\begin{align*}
 \TT_{\phi_l(T)}=&\left\{\psi\in L^2((0,1),\C): \Re(\lag \psi,\phi_l(T)\rag_{L^2})=0\right\}\\
 =&\left\{\psi\in L^2((0,1),\C): \Re\Big(\int_0^1\psi(x)\phi_l(T,x)dx\Big)=0\right\},
 \end{align*}
is the tangent space to the unit sphere $\sS$ in $L^2$ at $\phi_l(T)$. Let the end-point map of the solution to~\eqref{0.1_p} as follows:
$$\Psi_T^l:u\in L^2((0,T),\R)
\mapsto P^l_T\big(\psi(T)\big)\in H^1_p\cap \TT_{\phi_l(T)},$$
where $\psi$ is solution to~\eqref{0.1_p} with initial state $\psi_0=\phi_l$ and control $u$. We present the $C^1-$regularity of the end-point map in the space $H^1_p$ in the following propositions, which is proven as Proposition~\ref{smooth_d} as explained in Appendix~\ref{endpoint}

\begin{proposition} \label{smooth_p}
Let $l\in\Z$. Let $u\in L^2((0,T),\R) $ and $\psi$ the corresponding solution to~\eqref{0.1_p} in $(0,T)$ with initial state $\phi_l$. Then, the mapping~$\Psi_T^l$ is $C^1$, and for any $v\in L^2((0,T),\R)$, we have $$\partial_{u} \Psi_T^l( u): L^2((0,T),\R)\rightarrow H^1_p\cap \TT_{\phi_l(T)}$$
and $\partial_{u} \Psi_T^l( u)\cdot v=P_T^l(\xi(T)) $, where $\xi$ is the
linearised mild
solution
\begin{equation*}
 \xi(t)=\int_0^t e^{-i H (t-s)}\left(u(s)\mu \xi(s)+ v(s)\mu \psi (s)\right)\, \mathrm{d}s.
\end{equation*}
\end{proposition}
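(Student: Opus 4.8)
The plan is to mirror the proof of Proposition~\ref{smooth_d} given in Appendix~\ref{endpoint}, substituting the Dirichlet framework with the periodic one. The two ingredients that make the argument run are the well-posedness of the inhomogeneous equation in $H^1_p$ (Theorem~\ref{well_p}) and the fact that $(e^{-iHt})_{t\in\R}$ is a strongly continuous unitary group on $H^1_p$, which holds because $H=-\Delta+u_0P$ is self-adjoint with $\phi_k=e^{i2k\pi x}$ as eigenbasis, so $e^{-iHt}$ merely multiplies the $k$-th coefficient by the unimodular factor $e^{-i\lambda_k t}$ and hence commutes with the $H^1_p$-norm. I would first record that the map
\begin{equation*}
\Theta: L^2((0,T),\R)\to C([0,T],H^1_p),\qquad \Theta(u)=\psi,
\end{equation*}
where $\psi$ solves \eqref{0.1_p} with $\psi(0)=\phi_l$, is well-defined and locally Lipschitz by Theorem~\ref{well_p}; then, since $u\mapsto u\mu\psi$ enters affinely through the Duhamel term and the $H^1(A)$-to-$H^1_p$ smoothing estimate of Lemma~\ref{wellposedness-bound-3.p} controls the relevant operator norms, a standard contraction/implicit-function argument upgrades $\Theta$ to a $C^1$ map. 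Composing with the bounded linear projector $P_T^l$ and the continuous evaluation $\psi\mapsto\psi(T)$ gives that $\Psi_T^l=P_T^l\circ(\mathrm{ev}_T)\circ\Theta$ is $C^1$.

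Next I would compute the differential. Fix $u$ and let $\psi=\Theta(u)$. For $v\in L^2((0,T),\R)$, the Gateaux derivative $\eta:=\lim_{\e\to0}\e^{-1}(\Theta(u+\e v)-\Theta(u))$ should solve the linearised Duhamel equation
\begin{equation*}
\eta(t)=-i\int_0^t e^{-iH(t-s)}\bigl(u(s)\mu\,\eta(s)+v(s)\mu\,\psi(s)\bigr)\,\mathrm{d}s,
\end{equation*}
which after renaming $\xi=-i\eta$ (or keeping track of the $-i$ convention consistently with the statement) is exactly the equation written in Proposition~\ref{smooth_p}; its well-posedness in $C([0,T],H^1_p)$ again follows from Theorem~\ref{well_p} applied with source term $f(s)=v(s)\mu\psi(s)\in L^2((0,T),H^1(A))$ (note $\mu\psi\in H^1(A)$ because $\mu\in H^1(A)$ and $\psi\in H^1_p$, and the product of an $H^1(A)$ function with an $H^1$ function is in $H^1(A)$). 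Differentiating $\Psi_T^l$ then yields $\partial_u\Psi_T^l(u)\cdot v=P_T^l(\xi(T))$, with image in $H^1_p\cap\TT_{\phi_l(T)}$ by construction of $P_T^l$.

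The one point requiring a little care — and the only place the periodic setting genuinely differs from the Dirichlet one — is the justification that the formal Gateaux derivative is in fact a Fréchet derivative and that $u\mapsto\partial_u\Psi_T^l(u)$ is continuous into the space of bounded operators. This is handled exactly as in~\cite{BL-2010}: one writes the difference quotient, subtracts the candidate linearised solution, and estimates the remainder using Theorem~\ref{well_p} together with the local Lipschitz bound on $\Theta$, so that the remainder is $o(\|v\|_{L^2})$ uniformly on bounded sets of $u$; continuity in $u$ follows similarly from continuous dependence of both $\psi$ and $\xi$ on the data. I do not expect any genuine obstacle here, since the group $e^{-iHt}$ enjoys the same unitarity and the same hidden-regularity estimate (Lemma~\ref{wellposedness-bound-3.p}) as $e^{i\Delta t}$ did in Section~\ref{Dirichlet}; the spectral shift by $-2\pi u_0 k$ plays no role in these bounds. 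Consequently, as stated, the detailed verification is deferred to Appendix~\ref{endpoint}, where the Dirichlet proof is carried out and the periodic modifications amount to replacing $H^1_0$ by $H^1_p$, $e^{i\Delta t}$ by $e^{-iHt}$, and Lemma~\ref{wellposedness-bound-3.d} by Lemma~\ref{wellposedness-bound-3.p}.
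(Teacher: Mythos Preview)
Your proposal is correct and follows exactly the approach of the paper: the paper's proof of Proposition~\ref{smooth_p} consists of a single sentence stating that it is identical to that of Proposition~\ref{smooth_d} after replacing $H^1_0$ by $H^1_p$, the Dirichlet group $e^{i\Delta t}$ by $e^{-iHt}$, and Theorem~\ref{well_d} by Theorem~\ref{well_p}. Your write-up simply unpacks these substitutions with more detail (and even catches the missing $-i$ in the Duhamel formula of the statement), but the underlying argument is the same.
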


\subsection{Local exact controllability}
We are finally ready to prove the local exact controllability of the problem~\eqref{0.1_p} and prove the second point of Theorem~\ref{theoremb}.
\begin{theorem}\label{local-exact-p}
Let $N\in\N^*$ and $A:=\{a_j\}_{j\leq N}\subset(0,1)$. Let $u_0\in \R\setminus 2\pi \Q$ and $l\in\Z$. Let $\mu\in H^1(A)$ such that 
\begin{equation*}
\exists C>0,\, \forall k\in \Z,\,|\lag \mu\phi_l , \phi_{k}\rag_{L^2}|\ge \frac{C}{|k|+1}.
\end{equation*} 
Then, for any $T>0$, there exists $\delta>0$ such that, for~any~$\psi_0, \psi_1 \in H^1_p$ with $\|\psi_0\|_{L^2}=\|\psi_1\|_{L^2}=1$ and
\begin{equation*}
 \|\psi_0-\phi_l\|_{H^1_p}<\delta, \quad \quad \|\psi_1-\phi_l(T)\|_{H^1_p}<\delta,
\end{equation*}
 there exists a control $u_1\in L^2((0,T),\R)$ such that the corresponding solution $\psi$ to the problem~\eqref{0.1_p} with $\psi(0)=\psi_0$ satisfies $$\psi(T)= \psi_0.$$
\end{theorem}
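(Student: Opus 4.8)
The plan is to reproduce, in the periodic setting, the three-step scheme used for Theorem~\ref{local-exact-d}: linearise the dynamics of~\eqref{0.1_p} around the eigensolution $\phi_l(t)$ with zero control, prove that this linearised system is globally exactly controllable, and then conclude by the Inverse Mapping Theorem in Hilbert spaces together with the time-reversibility of the Schr\"odinger equation. Throughout, the admissible data are $u_0\in\R\setminus 2\pi\Q$ and $\mu\in H^1(A)$ with $|\lag\mu\phi_l,\phi_k\rag_{L^2}|\ge C/(|k|+1)$.

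The core new ingredient is a periodic analogue of Proposition~\ref{lin-exact-d}: for every $T>0$ and every $\psi_1\in H^1_p\cap\TT_{\phi_l(T)}$ there is a real control $v\in L^2((0,T),\R)$ such that the mild solution $\xi$ of
\begin{equation*}
i\p_t\xi=H\xi+v(t)\mu(x)\phi_l(t),\qquad \xi(t,0)=\xi(t,1),\ \ \p_x\xi(t,0)=\p_x\xi(t,1),\qquad \xi(0)=0,
\end{equation*}
satisfies $\xi(T)=\psi_1$. By Theorem~\ref{well_p}, $\xi(t)=-i\int_0^t e^{-iH(t-s)}v(s)\mu\phi_l(s)\,\mathrm ds$ lies in $C([0,T],H^1_p)$, and projecting $\xi(T)$ onto the Hilbert basis $\{\phi_k\}_{k\in\Z}$ of eigenfunctions of $H$ gives, for every $k\in\Z$,
\begin{equation*}
\int_0^T e^{i(\lambda_k-\lambda_l)s}v(s)\,\mathrm ds=\frac{i\, e^{i\lambda_k T}\lag\psi_1,\phi_k\rag_{L^2}}{\lag\mu\phi_l,\phi_k\rag_{L^2}}.
\end{equation*}
Since $\psi_1\in H^1_p$ and the denominator is bounded below by $C/(|k|+1)$, the right-hand side is an $\ell^2(\Z,\C)$ sequence, and the condition $\psi_1\in\TT_{\phi_l(T)}$ pins down its $k=l$ entry to a real line, which is exactly the compatibility required for a \emph{real} control. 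I would then solve this moment problem by passing to the symmetrised frequency family, i.e.\ the increasing sequence $(\nu_k^l)_{k\in\Z}$ of Lemma~\ref{asympt_spec}, which simultaneously encodes the frequencies $\lambda_k-\lambda_l$ and their opposites (hence the reality of $v$); because $u_0\notin 2\pi\Q$, Lemma~\ref{asympt_spec} provides both a uniform gap $\inf_{n\neq m}|\nu_n^l-\nu_m^l|>0$ and the asymptotic growth $|\nu_k^l-\nu_{k-1}^l|\ge C|k|$, so Proposition~\ref{solvability_moment_problem} applies and yields the desired $v\in L^2((0,T),\R)$.

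The last step mirrors the proof of Theorem~\ref{local-exact-d}. Theorem~\ref{well_p} and Proposition~\ref{smooth_p} give that the end-point map $\Psi_T^l:u\mapsto P_T^l(\psi(T))$ is well defined and $C^1$ from $L^2((0,T),\R)$ into $H^1_p\cap\TT_{\phi_l(T)}$, with $\partial_u\Psi_T^l(0)\cdot v=P_T^l(\xi(T))$; the previous step shows this differential is onto, so the Inverse Mapping Theorem makes $\Psi_T^l$ locally surjective near $u=0$, giving controllability onto an $H^1_p$-neighbourhood of $\phi_l(T)$ on the unit sphere. Time-reversibility of~\eqref{0.1_p}, via the change $t\mapsto T-t$ combined with complex conjugation (which preserves both the periodic boundary conditions and the class $H^1(A)$ of $\mu$), then upgrades this to exact controllability between arbitrary $\psi_0,\psi_1$ close to $\phi_l$ and $\phi_l(T)$. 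The main obstacle is precisely the moment-problem step: one must verify carefully that the reindexing into $(\nu_k^l)_{k\in\Z}$ faithfully captures the reality constraint on $v$, and that $\ell^2$ membership is not lost when dividing by $\lag\mu\phi_l,\phi_k\rag_{L^2}$ — this is why the decay rate $C/(|k|+1)$ is matched exactly to the $H^1_p$ regularity of the target, and why the Zeeman shift $u_0\notin 2\pi\Q$ is indispensable to replace the double eigenvalues of the bare periodic Laplacian by a simple, Ingham-admissible spectrum.
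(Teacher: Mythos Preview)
Your proposal is correct and follows essentially the same approach as the paper: both reduce to the global exact controllability of the linearised system~\eqref{lin_pbis} via the moment problem, invoke Lemma~\ref{asympt_spec} together with Proposition~\ref{solvability_moment_problem} to solve it, and then conclude by the Inverse Mapping Theorem and time-reversibility, exactly mirroring the Dirichlet argument. The paper packages the linearised-controllability step as a separate statement (Proposition~\ref{lin-exact-p}), but the content is identical to what you outline.
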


One may wonder if the local exact controllability of~\eqref{0.1_p} can be ensured when $u_0 = 0$. The main problem is the presence of double eigenvalues for the spectrum of the periodic Laplacian, which prevents us from using the strategy developed below for the proof of Theorem~\ref{local-exact-p}. A possibility may be adding one or more controls to the equation. However, the local exact controllability is still not straightforward. We refer to Remark~\ref{periodic} for further discussion.

The proof of Theorem~\ref{local-exact-p} follows the approach leading to Theorem~\ref{local-exact-d}. The Inverse Mapping Theorem allows us to infer the result by proving the global exact controllability of the linearised problem. In detail, we consider $\psi = \phi_{l}(t)$, which is a solution to~\eqref{0.1_p} when $u = 0$ and $\psi_0 = \phi_l$. The linearised system in this case reads:
\begin{equation}
\label{lin_pbis}
\begin{cases}
 i \p_t \xi=H \xi + v(t)\mu (x)\phi_l(t) , & \quad x\in (0,1),\ t>0,\\
 \xi(t,0)=\xi(t,1), & \quad t>0,\\
 \p_x\xi(t,0)=\p_x\xi(t,1), & \quad t>0,\\
 \xi(0,x)=0, & \quad x\in (0,1).
 \end{cases}
 \end{equation}
We ensure now the global exact controllability of the Schrödinger equation~\eqref{lin_pbis}.
\begin{proposition}\label{lin-exact-p}
Let $N\in\N^*$ and $A:=\{a_j\}_{j\leq N}\subset(0,1)$. Let $u_0\in \R\setminus 2\pi \Q$ and $l\in\Z$. Let $\mu\in H^1(A)$ such that
\begin{equation*}
\exists C>0,\,\forall k\in\Z,\,|\lag \mu\phi_l , \phi_{k}\rag_{L^2}|\ge \frac{C}{|k|+1}.
\end{equation*}
Then, for any $T>0$ and $\psi_1 \in H_{p}^1\cap \TT_{\phi_l(T)}$, there exists a control $u\in L^2((0,T),\R)$ such that the corresponding mild solution $\xi$ to the problem~\eqref{lin_pbis} with $\xi(0)=0$ satisfies $$\xi(T)= \psi_1.$$
\end{proposition}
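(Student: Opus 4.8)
The plan is to mimic the proof of Proposition~\ref{lin-exact-d}, replacing the Dirichlet spectral data by the periodic one and, crucially, using that the spectrum of $H=-\Delta+u_0P$ is simple when $u_0\in\R\setminus2\pi\Q$. First I would write down the mild solution explicitly via Duhamel: since $\xi(0)=0$ and $\phi_l(t)=e^{-i\lambda_l t}\phi_l$,
\begin{equation*}
\xi(t)=-i\int_0^t e^{-iH(t-s)}v(s)\mu(x)\phi_l(s)\,\mathrm{d}s,\qquad t\in[0,T].
\end{equation*}
Expanding in the orthonormal basis $\{\phi_k\}_{k\in\Z}$ of eigenfunctions of $H$ with eigenvalues $\lambda_k$, one gets for every $k\in\Z$
\begin{equation*}
\lag\xi(T),\phi_k\rag_{L^2}=-i\,e^{-i\lambda_k T}\lag\mu\phi_l,\phi_k\rag_{L^2}\int_0^T e^{i(\lambda_k-\lambda_l)s}v(s)\,\mathrm{d}s,
\end{equation*}
so that reaching a given target $\psi_1\in H^1_p\cap\TT_{\phi_l(T)}$ is equivalent to solving the moment problem
\begin{equation*}
\int_0^T e^{i(\lambda_k-\lambda_l)s}v(s)\,\mathrm{d}s=\frac{i\,e^{i\lambda_k T}\lag\psi_1,\phi_k\rag_{L^2}}{\lag\mu\phi_l,\phi_k\rag_{L^2}},\qquad k\in\Z.
\end{equation*}

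Next I would check that the right-hand side defines an admissible datum for the moment-problem solvability result. Since $\psi_1\in H^1_p$ we have $(|k|+1)\lag\psi_1,\phi_k\rag_{L^2}\in\ell^2$, and the hypothesis $|\lag\mu\phi_l,\phi_k\rag_{L^2}|\ge C/(|k|+1)$ exactly compensates this, so the sequence $(x_k)_{k\in\Z}$ on the right-hand side lies in $\ell^2(\Z,\C)$; the condition $\psi_1\in\TT_{\phi_l(T)}$ translates into the reality constraint on the $k=l$ coefficient, i.e. $x_l\in\R$, which is what is needed for the control $v$ to be real-valued. Then I invoke Lemma~\ref{asympt_spec}: for $u_0\notin2\pi\Q$ the frequencies $(\lambda_k-\lambda_l)_{k\in\Z}$, once relabelled as the increasing sequence $(\nu_k^l)_{k\in\Z}$, are pairwise distinct with a uniform gap $\inf_{n\neq m}|\nu_n^l-\nu_m^l|>0$ and satisfy the asymptotic separation $|\nu_k^l-\nu_{k-1}^l|\ge C|k|$. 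This is precisely the hypothesis of the Ingham-type solvability statement (Proposition~\ref{solvability_moment_problem} in the appendix), which then yields a real $v\in L^2((0,T),\R)$ solving the moment problem, hence $\xi(T)=\psi_1$. This shows the map $v\mapsto\xi(T)$ from $L^2((0,T),\R)$ onto $H^1_p\cap\TT_{\phi_l(T)}$ is surjective.

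The main obstacle — and the reason this proposition is genuinely different from the Dirichlet one — is the spectral gap condition. For the plain periodic Laplacian the eigenvalues $4\pi^2k^2$ are double ($k$ and $-k$ coincide), so $0$ appears with multiplicity two among the frequency differences and the moment problem is unsolvable as stated; the whole point is that the Zeeman term $u_0P$ breaks this degeneracy, which is exactly the content of Lemma~\ref{asympt_spec}. So the delicate step is really just the careful bookkeeping of the index set $\Z$ (as opposed to $\N^*$): one must verify that after collapsing $\{\lambda_l-\lambda_k,\,0,\,\lambda_k-\lambda_l\}$ into a single increasing sequence there are no accidental coincidences, which is precisely the arithmetic computation carried out in the proof of Lemma~\ref{asympt_spec} using $u_0\notin2\pi\Q$. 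Everything else — the Duhamel expansion, the $\ell^2$ estimate, the reality of $v$, and the appeal to the Ingham theorem — runs exactly as in Proposition~\ref{lin-exact-d}. I would also remark that, as in the Dirichlet case, when $l\neq0$ one works with the shifted frequencies and the non-resonance is automatic from Lemma~\ref{asympt_spec}, so no analogue of the condition~\eqref{resonant} is needed here.
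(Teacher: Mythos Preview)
Your proposal is correct and follows essentially the same route as the paper's proof: Duhamel expansion in the eigenbasis of $H$, reduction to a moment problem in the frequencies $(\lambda_k-\lambda_l)_{k\in\Z}$, the $\ell^2_r$ check from $\psi_1\in H^1_p\cap\TT_{\phi_l(T)}$ and the hypothesis on $\mu$, and the appeal to Lemma~\ref{asympt_spec} plus Proposition~\ref{solvability_moment_problem}. The only cosmetic difference is that the paper first spells out the case $l=0$ before stating that the general $l$ is handled identically, whereas you treat general $l$ from the start.
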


\begin{proof} The proof follows the strategy of Theorem~\ref{lin-exact-d}. Let us consider the case $l=0$ so that $\phi_l(t)=\phi_0(t)=1$. We write $ \xi(t)= -i\int_0^te^{-iH(t-s)} v(s) \mu(x)\,\mathrm{d}s$ and we decompose $\xi(T)$
$$
 \lag \xi(T),\phi_{k}\rag_{L^2}=-i e^{-i\lambda_{k} T}\lag \mu ,\phi_k\rag_{L^2}\int_0^T e^{i \lambda_{k} s} v(s)\,\mathrm{d}s,\qquad  \forall k\in\Z,$$
which leads to the moment problem $
\int_0^T e^{i (\lambda_{k}-\lambda_0) s} v(s) ds = \frac{i e^{i\lambda_{k} T}\lag \xi(T),\phi_{k}\rag_{L^2}}{\lag \mu ,\phi_k\rag_{L^2}}$ for every $\forall k\in\Z.$ Now, we have $\lag \mu ,\phi_k\rag_{L^2}\geq C (1+|k|)^{-1}$ for $C>0$ and, for any $ \xi \in H^1_p\cap \TT_{\phi_0(T)}$,
$$\left( \frac{i e^{i\lambda_{k} T}\lag \xi(T),\phi_{k}\rag_{L^2}}{\lag \mu ,\phi_k\rag_{L^2}} \right)_{k\in\Z}\in \ell^2_r(\Z,\C):=\left\{(x_k)_{k\in \Z}\in\ell^2:\, x_0\in\R\right\}.$$
For every fixed $T>0$, thanks to Lemma~\ref{asympt_spec}, we apply Proposition~\ref{solvability_moment_problem} to ensure the solvability of the moment problem corresponding to the control issue, hence there exists $v\in L^2((0,T),\R)$ such that
$$
\int_0^T e^{i\lambda_{k} s} v(s)\,\mathrm{d}s = \frac{i e^{i\lambda_{k} T}\lag \xi(T),\phi_{k}\rag_{L^2}}{\lag \mu ,\phi_k\rag_{L^2}}, \quad k\in\Z.
$$
Finally, the global exact controllability is ensured when $l=0$. The general case $l\neq 0$ follows equivalently by the solvability of the moment problem
$$
\int_0^T e^{i(\lambda_{k}-\lambda_{l}) s} v(s)\,\mathrm{d}s = \frac{i e^{i\lambda_{k} T}\lag \xi(T),\phi_{k}\rag_{L^2}}{\lag \mu\phi_l ,\phi_k\rag_{L^2}}, \quad k\in\Z.
$$
The result is again proved by using Proposition~\ref{solvability_moment_problem} thanks to Lemma~\ref{asympt_spec}.\end{proof}

\begin{proof}[Proof of Theorem~\ref{local-exact-p}]
We follow the proof of Theorem~\ref{local-exact-d}. We know that $\phi_l(t)$ is a solution to the problem~\eqref{0.1_p} corresponding to the control $u(t) = 0$. We use the strategy leading to Theorem~\ref{local-exact-p}: the operator $\Psi_T^l: u \mapsto \psi(T)$ is $C^1$-smooth, and for any $v \in L^2((0, T), \R)$, we have $\partial_u \Psi_T^l(u=0) \cdot v = \xi(T),$ where $\xi$ is the solution to~\eqref{lin_pbis}. Now, Proposition~\ref{lin-exact-p} establishes the global exact controllability of the linearised system~\eqref{lin_pbis}. Finally, the result follows by applying the Inverse Mapping Theorem to the mapping $u \mapsto \Psi_T^l(u)$ and using the time-reversibility property of the Schrödinger equation.
\end{proof}


\section{The Neumann case}\label{comments_n}

In Sections~\ref{Dirichlet} and~\ref{section_p}, we developed a strategy leading to the local exact controllability in suitable subspaces of $H^1$ of the bilinear Schrödinger equations~\ref{0.1_d} and~\ref{0.1_p}, respectively. It is natural to wonder if the same strategy applies to other boundary conditions such as the Neumann ones. Recall that the controllability and the well-posedness can be ensured in $D(-\Delta)$ where $-\Delta$ is the Neumann Laplacian as soon as $\mu\in H^2$, see~\cite{BL-2010}. This section presents the application of our strategy to study this framework in $H^1$ and some open problems.

\subsection{Well-posedness}

We consider the bilinear Schrödinger equation in the presence of Neumann boundary conditions:
\begin{equation}\label{0.1_n}
\begin{cases}
 i \p_t \psi =-\Delta\psi+ u(t)\mu(x)\psi, & \quad x\in (0,1),\ t>0,\\
 \p_x\psi(t,x=0)=\p_x\psi(t,x=1)=0,& \quad t>0,\\
 \psi_0(t=0)=\psi_0\in L^2((0,1),\C).
\end{cases}
\end{equation}
The Neumann Laplacian in~\eqref{0.1_n} is self-adjoint and with compact resolvent. Its spectrum is made of isolated and simple eigenvalues $(\lambda_k)_{k\in\N}$ such that $\lambda_k={k^2\pi^2}.$ A family of corresponding eigenfunctions, thus forming a Hilbert basis for $L^2$, is given by
$$\phi_0(x)=1,\qquad\phi_k(x)= \sqrt{2}\cos\big({k}\pi x\big),\qquad \forall k\in\N^*.$$
To consider the well-posedness of~\eqref{0.1_n} we introduce the corresponding bilinear Schrödinger equation in the presence of a source term:
\begin{equation}\label{0.1_n_bis}
\begin{cases}
 i \p_t \psi =-\Delta\psi+ u(t)\mu(x)\psi + f(t,x) & \quad x\in (0,1),\ t>0,\\
 \p_x\psi(t,x=0)=\p_x\psi(t,x=1)=0,& \quad t>0,\\
 \psi_0(t=0)=\psi_0\in L^2((0,1),\C).
\end{cases}
\end{equation}

The well-posedness of~\eqref{0.1_n_bis} in the space $H^1$ (and then~\eqref{0.1_n}) is given by the following proposition.
\begin{proposition}\label{well_n}
Let
$N \in \N^*$,
$A = \{a_j\}_{j \leq N} \subset (0,1)$ and
$T>0$.
Let $\mu\in H^1(A)$, $\psi_0\in H^{1}$ and $f \in L^2( (0,T) , H^{1}(A))$.
For any $u\in L^2( (0,T) ,\R)$, there exists a unique mild solution $\psi \in C( [0,T] ,H^{1})$ to the problem~\eqref{0.1_n_bis} such that $\psi(0)=\psi_0$ that is a solution to
$$
\psi(t)=e^{i\Delta t} \psi_0-i\int_0^te^{i\Delta(t-s)}(u(s)\mu \psi(s) +  f(s))  d s, \quad t\in [0,T].
$$
Moreover, there is a constant~$C_T>0$ such that
$$\|\psi\|_{C( [0,T], H^{1} )}\le C_T \left (\|\psi_0\|_{H^1}+\|f\|_{L^2( (0,T) , H^{1}(A))}\right).$$
\end{proposition}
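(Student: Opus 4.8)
The plan is to follow, essentially verbatim, the proof of well-posedness in the Dirichlet case (Theorem~\ref{well_d}), replacing the Dirichlet Laplacian by the Neumann one. First I would reduce the statement to a fixed-point problem: on the space $C([0,T],H^1)$ consider the map
\[
L(\psi)(t)=e^{i\Delta t}\psi_0-i\int_0^t e^{i\Delta(t-s)}\big(u(s)\mu\psi(s)+f(s)\big)\,\mathrm{d}s,
\]
where $(e^{i\Delta t})_{t\in\R}$ now denotes the unitary group generated by the \emph{Neumann} Laplacian, which is a strongly continuous unitary group on $H^1=H^1((0,1),\C)$ endowed with its natural norm (equivalent to the standard one). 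As in the Dirichlet case, $L$ is a well-defined contraction on $C([0,\tau],H^1)$ for $\tau>0$ small enough, and a solution on all of $[0,T]$ then follows by iteration in time together with a Grönwall-type argument that also yields the stated a priori bound. Two ingredients are required: (i) the bilinear estimate $\|\mu\psi\|_{H^1(A)}\le C\|\mu\|_{H^1(A)}\|\psi\|_{H^1}$ for $\mu\in H^1(A)$ and $\psi\in H^1$, which is immediate since on each subinterval $(a_j,a_{j+1})$ the product of two $H^1$ functions is again $H^1$ (so that $t\mapsto u(t)\mu\psi(t)\in L^2((0,T),H^1(A))$ whenever $u\in L^2((0,T),\R)$ and $\psi\in C([0,T],H^1)$); and (ii) the Neumann analogue of Lemma~\ref{wellposedness-bound-3.d}, namely that $G:t\mapsto\int_0^t e^{-i\Delta s}f(s)\,\mathrm{d}s$ maps $L^2((0,T),H^1(A))$ continuously into $C([0,T],H^1)$ with $\|G\|_{C([0,T],H^1)}\le C_T\|f\|_{L^2((0,T),H^1(A))}$.

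The heart of the matter is (ii), proved just as Lemma~\ref{wellposedness-bound-3.d}, only with the cosine basis $\phi_0=1$, $\phi_k=\sqrt 2\cos(k\pi\cdot)$ in place of the sine basis. I would decompose $G(t)=\sum_{k\in\N}\phi_k\int_0^t e^{i\lambda_k s}\lag\phi_k,f(s)\rag_{L^2}\,\mathrm{d}s$; the zero mode contributes at most $T\|f\|_{L^2((0,T),L^2)}^2$ to $\|G(t)\|_{H^1}^2$, hence is harmless. For $k\ge 1$, writing $\cos(k\pi x)=(k\pi)^{-1}\p_x\sin(k\pi x)$ and integrating by parts on each $(a_j,a_{j+1})$ with $a_0:=0$, $a_{N+1}:=1$, the boundary contributions at $x=0$ and $x=1$ vanish because $\sin(k\pi\cdot)$ does — this is precisely where the Neumann condition enters — leaving, in the spirit of~\eqref{well_d_2},
\[
\frac{k\pi}{\sqrt 2}\,\lag\phi_k,f(\tau,\cdot)\rag_{L^2}=\sum_{j=1}^{N}\sin(k\pi a_j)\big(f(\tau,a_j^-)-f(\tau,a_j^+)\big)-\lag\sin(k\pi\cdot),g(\tau,\cdot)\rag_{L^2},
\]
where $g:=(\p_x f)1_{(0,a_1)}+\dots+(\p_x f)1_{(a_N,1)}\in L^2((0,T),L^2)$. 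Applying the Ingham-type estimate of Corollary~\ref{well-ingham} to each scalar function $\tau\mapsto\sin(k\pi a_j)\big(f(\tau,a_j^-)-f(\tau,a_j^+)\big)$ exactly as in~\eqref{well_d_3} — using $|\sin(k\pi a_j)|\le 1$ and the embedding $H^1((a_{j-1},a_j))\hookrightarrow C([a_{j-1},a_j])$ to control the traces by $\|f\|_{H^1(A)}$ — together with the crude bound $\big|\int_0^t e^{i\lambda_k\tau}h(\tau)\,\mathrm{d}\tau\big|^2\le t\int_0^t|h(\tau)|^2\,\mathrm{d}\tau$ and Parseval applied to the orthonormal system $\{\sqrt 2\sin(k\pi\cdot)\}_{k\in\N^*}$ for the $g$-term, yields $\|G(t)\|_{H^1}^2\le C_T\|f\|_{L^2((0,t),H^1(A))}^2$ uniformly in $t\in[0,T]$, exactly as in~\eqref{bound_4}. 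Continuity of $G$ then follows by applying the same estimate to $G(t')-G(t)=\int_t^{t'}e^{-i\Delta s}f(s)\,\mathrm{d}s$.

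Granted (i) and (ii), Proposition~\ref{well_n} follows from the fixed-point argument precisely as Theorem~\ref{well_d} follows from Lemma~\ref{wellposedness-bound-3.d} and the unitarity of the group on $H^1$. I do not anticipate a real obstacle here: everything that differs from~\cite{BL-2010} and from the Dirichlet proof is cosmetic, namely the separate (trivial) treatment of the zero mode and the observation that, with the cosine basis, the endpoint boundary terms vanish automatically, so only the interior discontinuities of $\mu$ — hence of $f=u\mu\psi$ — cause a loss of regularity. This is exactly the hidden-regularity mechanism of~\cite{BL-2010} that makes well-posedness survive in $H^1$ even though $M_\mu:\psi\mapsto\mu\psi$ is not bounded on $H^1$.
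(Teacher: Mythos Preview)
Your proposal is correct and follows essentially the same approach as the paper, which merely records that the proof is an adaptation of Theorem~\ref{well_d} using $D(|\Delta|^{1/2})=H^1$ for the Neumann Laplacian and decomposing in the cosine basis $\{\phi_k\}_{k\in\N}$. You have simply spelled out the details the paper leaves implicit---the separate (trivial) zero-mode contribution, the integration by parts producing $\sin(k\pi\cdot)$ so that the endpoint terms at $x=0,1$ drop out, and the Ingham/Parseval estimates for the interior jump terms and the $g$-term---all of which are exactly the expected Neumann counterparts of the steps in Lemma~\ref{wellposedness-bound-3.d}.
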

\begin{proof}
The proof is an adaptation of the one of Theorem~\ref{well_d} by observing that $D(|\Delta|^\frac{1}{2}) = H^1$ when $-\Delta$ is the Neumann Laplacian. The space $H^1$ is here endowed the norm 
$$\sqrt{|\lag\cdot,\phi_0\rag_{L^2}|^2+\sum_{j\in\N^*}|k\lag\cdot,\phi_k\rag_{L^2}|^2}.$$
The decomposition of the solution is with respect to the basis $\{\phi_k\}_{k \in \N}$, the eigenfunctions of $-\Delta$.
\end{proof}

\subsection{An open problem: the local exact controllability in \texorpdfstring{$H^1$}{H1}}

Proposition~\ref{well_n} ensures the well-posedness of~\eqref{0.1_n} in $H^1$ and the $C^1$-smoothness of the endpoint map in the same space can be proved as in the Dirichlet case (Proposition~\ref{smooth_d}). This framework is suitable for the Inverse Mapping Theorem, and one may wonder if the local exact controllability, as in Theorem~\ref{local-exact-d}, can be ensured nearby a bounded state $\phi_l$. The techniques leading to Theorem~\ref{local-exact-d} can be applied, at least from an abstract point of view, to the Neumann case.

In detail, let $l\in\N$ verifying~\eqref{resonant}. Let $\mu$ such that the well-posedness of~\eqref{0.1_n} and the $C^1$-smoothness of the endpoint map in $H^1$ are ensured. Assume also the existence of $C > 0$ so that
\begin{equation}\label{baddd}
|\lag \mu\phi_l, \phi_{k}\rag_{L^2}|\ge \frac{C}{|k|+1},\qquad \forall k\in\N.
\end{equation}
Then, the local exact controllability of~\eqref{0.1_n} holds, as for Theorem~\ref{local-exact-d}, around the bounded state $\phi_l$.

Here lies the main difficulty with this framework.  
There are no potentials $\mu$ satisfying both Proposition~\ref{well_n} and the relations~\eqref{baddd}, as illustrated in the following proposition. From this perspective, the strongest local exact controllability result in the Neumann case remains the one by Beauchard and Laurent in the domain of the operator $D(\Delta)$ with $\mu \in H^2(0,1)$, proved in \cite{BL-2010}.

\begin{proposition}
Let $l\in\N^*$. Assume $\mu \in  H^1(A)$ with $A = \{a_j\}_{j \leq N} \subset \R$ and $N \in \N^*$. There does not exist $C > 0$ such that
\begin{equation}\label{bad}
|\lag \mu\phi_l, \phi_{k}\rag_{L^2}|\ge \frac{C}{|k|+1},\quad \forall k\in\N.
\end{equation}
\end{proposition}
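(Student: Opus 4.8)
The plan is to show that the Neumann Fourier coefficients $\lag \mu\phi_l,\phi_k\rag_{L^2}$ cannot decay merely like $1/k$, but in fact must have a subsequence decaying strictly faster, contradicting~\eqref{bad}. The key observation is that, since $\mu\in H^1(A)$, the product $\mu\phi_l$ also lies in $H^1(A)$ (with the same discontinuity set), and the Neumann cosine coefficients of an $H^1(A)$ function can be integrated by parts once: writing $\phi_k=\sqrt2\cos(k\pi x)$ and integrating by parts on each interval $(a_j,a_{j+1})$ we obtain
\begin{equation*}
k\pi\,\lag \mu\phi_l,\phi_k\rag_{L^2}
= \sqrt2\sum_{j=0}^{N}\Big[(\mu\phi_l)(a_{j+1}^-)\sin(k\pi a_{j+1}) - (\mu\phi_l)(a_j^+)\sin(k\pi a_j)\Big]
- \lag (\mu\phi_l)',\sqrt2\sin(k\pi\cdot)\rag_{L^2},
\end{equation*}
where $a_0:=0$, $a_{N+1}:=1$, and $(\mu\phi_l)'$ denotes the piecewise derivative, which is in $L^2$. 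Note that the boundary terms at $x=0$ and $x=1$ vanish because $\sin(k\pi\cdot 0)=\sin(k\pi\cdot 1)=0$; the only surviving boundary contributions come from the interior jump points $a_1,\dots,a_N$.

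First I would record the two consequences of this identity. The $L^2$-pairing term $\lag (\mu\phi_l)',\sqrt2\sin(k\pi\cdot)\rag_{L^2}$ is the $k$-th sine-Fourier coefficient of an $L^2$ function, hence forms an $\ell^2$ sequence in $k$; in particular it tends to $0$ along a subsequence, and more is true — it is $o(1)$ in Cesàro average. The interior boundary term is a finite trigonometric sum $\sum_{j=1}^N c_j\sin(k\pi a_j)$ with fixed coefficients $c_j$ (the jumps of $\mu\phi_l$, possibly zero if $\mu$ happens to be continuous at $a_j$). The heart of the argument is then: either all the $c_j$ vanish, in which case $k\pi\lag\mu\phi_l,\phi_k\rag_{L^2}$ is itself an $\ell^2$ sequence, so $\lag\mu\phi_l,\phi_k\rag_{L^2}=o(1/k)$ along a subsequence and~\eqref{bad} fails; or some $c_j\neq 0$, in which case I claim the sum $\sum_j c_j\sin(k\pi a_j)$ still comes arbitrarily close to $0$ infinitely often, again forcing $k\pi\lag\mu\phi_l,\phi_k\rag_{L^2}$ to be small along a subsequence.

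The main obstacle is precisely this last claim about the finite exponential sum. If the $a_j$ were rational, $\sin(k\pi a_j)$ would be periodic in $k$ and one would just need one residue class on which the whole sum vanishes or is small — but a priori the sum need not hit exactly $0$, so one needs a density argument. The clean way is: consider the vector $\big(\sin(k\pi a_1),\dots,\sin(k\pi a_N)\big)\in\R^N$; by Weyl equidistribution (or simply pigeonhole / Dirichlet's simultaneous approximation theorem applied to the $a_j$), the values $(k\pi a_1,\dots,k\pi a_N)\bmod 2\pi$ return infinitely often to within any $\e$ of a point where every coordinate's sine is near $0$ — for instance near $(0,\dots,0)\bmod\pi$, which is always an accumulation point of $\{(k\pi a_1,\dots,k\pi a_N)\bmod\pi\}$ by Dirichlet (take $k=q$ where $q$ is a common approximate denominator of the $a_j$, or use that $1$ itself is among the frequencies so $k\mapsto k$ hits near-integers trivially; more carefully one uses that $0$ lies in the closure of $\{k(a_1,\dots,a_N)\bmod 1\}$). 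Along such a subsequence $k_m$ we get $\sum_j c_j\sin(k_m\pi a_j)\to 0$, while the $\ell^2$ term along a further subsequence also $\to 0$; hence $k_m\lag\mu\phi_l,\phi_{k_m}\rag_{L^2}\to 0$, i.e. $\lag\mu\phi_l,\phi_{k_m}\rag_{L^2}=o(1/k_m)$, which is incompatible with $|\lag\mu\phi_l,\phi_k\rag_{L^2}|\ge C/(k+1)$ for a fixed $C>0$. I would isolate the equidistribution/Dirichlet input as the one nontrivial lemma and otherwise the proof is bookkeeping with integration by parts.
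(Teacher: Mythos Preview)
Your proof is correct and shares its skeleton with the paper's: integrate by parts once to write $k\pi\langle\mu\phi_l,\phi_k\rangle$ as a finite jump sum $\sum_j c_j\sin(k\pi a_j)$ plus the $k$-th sine coefficient of an $L^2$ function (which is $o(1)$), then use simultaneous Diophantine approximation to make the jump sum small infinitely often. Where the two diverge: the paper first treats the piecewise-constant case as a warm-up, then splits the $a_j$ into rational and irrational, kills the rational contributions exactly by restricting $k$ to multiples of a common denominator, and applies a \emph{quantitative} simultaneous approximation theorem to the remaining irrational points, obtaining the sharper conclusion $|\langle\mu\phi_l,\phi_k\rangle|\le C/k^{1+p}$ for some $p>0$ along a subsequence. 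You instead handle all the $a_j$ at once via the elementary pigeonhole fact that $0$ is an accumulation point of $\{k(a_1,\dots,a_N)\bmod 1:k\in\N^*\}$, which gives only $k\langle\mu\phi_l,\phi_k\rangle\to 0$ along a subsequence---but that already contradicts~\eqref{bad}, so your route is shorter and more self-contained, at the price of the (here unneeded) quantitative rate. One nitpick: an $\ell^2$ sequence tends to $0$ along the \emph{full} sequence, not merely a subsequence, so the ``further subsequence'' extraction you mention for the remainder term is unnecessary.
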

\begin{proof}
Let us start by studying the simpler case of $l=0$ and $\mu$ being a piecewise constant function. We treat the general case later on.

\smallskip

\noindent
{\bf 1. Piecewise constant function}. Let $l=0$. Assume $\mu = \sum_{l=1}^N \alpha_l 1_{[r_{2l-1},r_{2l}]},$ for some $\alpha_l \in \R$ and $ 0 \leq r_{2l-1}< r_{2l}\leq 1\in\R$ with $1\leq l\leq N$.
We have that
\begin{align*} \lag \mu, \phi_k \rag_{L^2} = \frac{\sqrt{2}}{k \pi}\sum_{l=1}^N\alpha_l\big(\sin(k\pi r_{2l})-\sin(k\pi r_{2l-1})\big),\quad \forall k\in\N^*.\end{align*}
Consider 
$$I := \left\{ j \in \{1, \ldots, 2N\}\mid\, r_l \in \R \setminus \Q\right\}$$ 
For every $l \in I$, we have $r_l = p_l / q_l$ for some $p_l, q_l \in \N^*$. For every $k$ such that, 
\begin{equation}\label{bad_2}
k \in Q \cdot \N^*, \quad\quad  Q := \prod_{l \in I} q_l \in \N^*,
\end{equation}
it holds $\sin(k\pi r_l) = 0$ for every $l \in I$. Then, for every $k \in Q \cdot \N^*$, we have
\begin{align*}
\lag \mu, \phi_k \rag_{L^2} = \frac{\sqrt{2}}{k \pi} \sum_{l \in I^c} (-1)^l \alpha_l\sin(k\pi r_l) = \frac{\sqrt{2}}{k \pi} \sum_{l \in I^c} (-1)^l \alpha_l \sin(n\pi \tilde r_l),
\end{align*}
where $n = \frac{k}{Q}$ and $\tilde r_l = Q\cdot r_l\in\R\setminus\Q$. Now, we show that~\eqref{bad} cannot be verified for every $k$ satisfying~\eqref{bad_2}, due to the theory of Diophantine approximations of irrational numbers (see, for instance, \cite[Appendix A]{GGCC}, ``The Graph Geometric Control Condition''). Indeed, from \cite[Theorem A.2]{GGCC}, as explained in \cite[relation (64); Appendix A]{GGCC}, there exist infinitely many $n, m \in \N^*$ such that, for every $l\in I^c$
\begin{equation}\label{approx}
|n \tilde r_l - m| = n \left| \tilde r_l - \frac{m}{n} \right| \leq n \frac{1}{n^{1+\frac{1}{card(I^c)}}}=  \frac{1}{n^{\frac{1}{card(I^c)}}},.
\end{equation}
The last inequality implies the existence of $C > 0$ such that
\begin{align*}
\frac{\sqrt{2}}{n Q  \pi} \left|\sum_{l \in I^c} (-1)^l \sin(n\pi \tilde r_l)\right| \leq \frac{C}{n^{1+\frac{1}{card(I^c)}}},
\end{align*}
for those $n$ satisfying~\eqref{approx}. This fact is an obstruction to the validity of~\eqref{bad} for every $k$ satisfying~\eqref{bad_2}, which concludes the proof in this case.

\smallskip

\noindent
{\bf 2. Piecewise $H^1$ function}. Consider now $l=0$ and $\mu \in H^1(A)$. We have  
$$
\lag \mu, \phi_k \rag_{L^2} = -\frac{\sqrt{2}}{k \pi} \sum_{l=1}^N \big(\mu'(a_l^+) \sin(k\pi a_l^+) - \mu'(a_l^-) \sin(k\pi a_l^-)\big) - \frac{\sqrt{2}}{k \pi} \lag \nu, \sin(k\pi x) \rag_{L^2},
$$  
for $\nu := \big(\p_x \mu\big) 1_{(0,a_1)} + \ldots + \big(\p_x \mu\big) 1_{(a_N,1)} \in L^2\big((0,1),\C\big)$.
Since $\{\sqrt{2} \sin(k\pi x)\}_{k \in \N^*}$ is a Hilbert basis of $L^2((0,1),\C)$, we have  
\begin{equation}\label{obstr1}\lag \nu, \sin(k\pi x) \rag_{L^2} \xrightarrow[]{k\rightarrow+\infty}0.\end{equation}
Finally, the arguments used in Part {\bf 1.} of this proof show the existence of $C,p > 0$ such that
$$
\frac{\sqrt{2}}{k \pi}\left| \sum_{l=1}^N \big(\mu'(a_l^+) \sin(k\pi a_l^+) - \mu'(a_l^-) \sin(k\pi a_l^-)\big) \right| \leq \frac{C}{k^{1+p}},
$$  
for infinitely many $k \in \N^*$. This property and the asymptotic behaviour~\eqref{obstr1} prevent the validity of~\eqref{bad} and conclude the proof when $l=0$. The general case of $l\neq 0$ is proved in the same way since $\mu\phi_l \in H^1(A)$ when $\mu \in H^1(A)$.
\end{proof}

\begin{remark}\label{periodic}
Note that, if we were able to prove the local exact controllability of~\eqref{0.1_n} in $H^1$, it would be possible to ensure Theorem~\ref{theoremb} with two suitable control potentials $\mu$ and without using the momentum operator $P$ (so when $u_0 = 0$). Indeed, consider the equation
\begin{equation}\label{0.1_p_twocontrols}
 i \p_t \psi =-\Delta\psi+ u_1(t)\mu_1(x)\psi+ u_2(t)\mu_2(x)\psi,  \quad x\in (0,1),\ t>0,
\end{equation}
in the presence of periodic boundary conditions and with $\mu_1$ and $\mu_2$ respectively antisymmetric and symmetric with respect to the point $x=1/2$. For every $f:(0,1)\rightarrow \C$, denote $f^1(x)=\frac{f(x)-f(1-x)}{2}$ and $f^2= \frac{f(x)+f(1-x)}{2},$
the antisymmetric and the symmetric part of $f$, respectively.
Now, the Schrödinger equation appearing in~\eqref{0.1_p_twocontrols} can be rewritten as a system of coupled equations, respectively in the space of those $L^2$ functions which are antisymmetric and symmetric:
$$ i \p_t \psi^1 =-\Delta\psi^1+ u_1(t)\mu_1(x)\psi^2+ u_2(t)\mu_2(x)\psi^1,$$
$$ i \p_t \psi^2 =-\Delta\psi^2+ u_1(t)\mu_1(x)\psi^1+ u_2(t)\mu_2(x)\psi^2.$$
The linearization of~\eqref{0.1_p_twocontrols} with respect to the constant function $1$ consists in two decoupled linear equations: one in the presence of Dirichlet boundary conditions, and the second with Neumann boundaries. For this reason, if we were able to prove the local exact controllability in both the Dirichlet case in $H^1_0$ and the Neumann one in $H^1$, we would be able to prove the same controllability of the periodic Schrödinger equation~\eqref{0.1_p_twocontrols} in $H^1_p$.
\end{remark}

\section{Quantum Harmonic oscillator}\label{comments_h}
Let us now discuss the application of the above strategy to the quantum Harmonic oscillator. We consider the following bilinear Schrödinger equation in $L^2(\R,\C)$:
\begin{equation}\label{0.1_h}\begin{cases}
 i \p_t \psi =[-\Delta+x^2]\psi+ u(t)\mu(x)\psi & \quad x\in \R,\ t>0,\\
 \psi_0(t=0)=\psi_0\in L^2(\R,\C).
\end{cases}
\end{equation}

\subsection{Spectral properties}

Let us introduce $H=-\Delta +x^2$ defined in the domain
$$D(H)=\{\psi\in L^2(\R,\C)\,:\, (-\Delta+ x^2)\psi\in L^2(\R,\C)\}.$$
The operator $H$ has compact resolvent and its spectrum is composed by the simple eigenvalues $(\lambda_k)_{k\in\N}$ such that 
$$\lambda_k=2 k +1.$$
A corresponding family of eigenfunctions, forming a Hilbert basis for $L^2(\R,\C)$, is given by the Hermite functions
$$\phi_k(x)= (-1)^{k}(\sqrt{\pi} 2^k k!)^{-1/2} e^{x^2/2} \big(e^{-x^2}\big)^{(k)},\qquad \forall k\in\N.$$
Each eigenfunction $\phi_k$ can also be defined in terms on 
Recall that 
Hermite polynomial $H_k(x) $ can be defined as follows
$$H_k(x)= (-1)^{k} e^{-x^2} \big(e^{-x^2}\big)^{(k)},\qquad \forall k\in\N$$
and hence
$$\phi_k(x)= (\sqrt{\pi} 2^k k!)^{-1/2} e^{-x^2/2} H_k(x),\qquad \forall k\in\N.$$

\begin{remark}
A Hilb's type formula applied to the Hermite polynomial (see \cite[Theorem 8.22.6 and equation (8.22.8)]{Szego}) yields, for every $m\in\N$,
\begin{align}
\frac{(m+1)!}{(2m + 1)! } e^{-x^2/2} H_{2m} (x) &= \cos(\sqrt{4m+1}x - m\pi) \nonumber\\
+\frac{x^3}{6\sqrt{4m+1}}&\sin(\sqrt{4m+1}x - m\pi) + O \left( \frac{1}{ m } \right),
\label{hib_1}
\end{align}
\begin{align}
 \frac{(m+2)! \sqrt{4m+3}}{(2m + 3)! } e^{-x^2/2} H_{2m+1} (x) &= \cos\Big(\sqrt{4m+3}x - (2m+1)\frac{\pi}{2}\Big)\nonumber\\
 &+\frac{x^3}{6\sqrt{4m+3}}\sin\Big(\sqrt{4m+3}x - (2m+1)\frac{\pi}{2}\Big)\nonumber\\
 &+ O \left( \frac{1}{ m } \right).
 \label{hib_2}
\end{align}
Now, we observe that
$$\lim_{m\rightarrow + \infty} (2m)^{-\frac{1}{4}}\frac{(m+1)!}{(2m + 1)! } 2^m \sqrt{\sqrt{\pi} (2m)!}=\frac{\sqrt{\pi}}{2^{\frac{5}{4}}},$$
$$ \lim_{m\rightarrow + \infty} (2m+1)^{-\frac{1}{4}}\frac{(m+2)! \sqrt{4m+3}}{(2m + 3)! } \sqrt{\sqrt{\pi} 2^{2m+1} (2m+1)!}=\frac{\sqrt{\pi}}{2^{\frac{5}{4}}}.$$
The last identities and~\eqref{hib_1}-\eqref{hib_2} yield that
\begin{align*}
\phi_{k} (x) &= c_k\left( \cos\Big(\sqrt{2k+1}x - k\frac{\pi}{2}\Big)\right.\\
&\left.+\frac{x^3}{6\sqrt{2k+1}}\sin\Big(\sqrt{2k+1}x - k\frac{\pi}{2}\Big) + O \left( \frac{1}{ k } \right)\right),
\end{align*}
where $c_k$ are some constants, depending on $k$, such that
$$\lim_{k\rightarrow+\infty} {c_k}{ k^{\frac{1}{4}}}=\frac{2^{\frac{5}{4}}}{\sqrt{\pi}}. $$
Hence, there exists $C_1>0$ such that, for every $x\in\R$, we have
\begin{align}
\label{bound_h}
 |\phi_k(x)|\leq C_1k^{-\frac{1}{4}},\quad\quad\forall k\in\N.
\end{align}
\end{remark}

\subsection{Well-posedness}

To ensure the well-posedness of~\eqref{0.1_h}, we consider as in the previous frameworks the following bilinear Schrödinger equation in the presence of a source term:
\begin{equation}
\label{0.1_h_bis}
\begin{cases}
 i \p_t \psi =\big[-\Delta+x^2\big]\psi+ u(t)\mu(x)\psi + f(t,x) & \quad x\in \R,\ t>0,\\
 \psi_0(t=0)=\psi_0\in L^2(\R,\C).
\end{cases}
\end{equation}
Let $A:=\{a_j\}_{j\leq N}\subset \R$ with $N\in\N^*$. For $n\in\N^*$, we introduce the space
\begin{equation*}
\widetilde H^1(A)=\left\{\psi \in L^2(\R,\C)\,\mid\, x^n\psi \in L^2,\, \psi\, \text{is $H^1$ up to the discontinuities}\, \{a_j\}_{j\leq N}\right\},
\end{equation*}
endowed with the norm
$$\|\cdot\|_{\widetilde H^1(A)}=\sqrt{\|\cdot\|_{H^1(-\infty,a_1)}^2+...+\|\cdot\|_{H^1(a_N,+\infty)}^2+\big\|(1+|x|)\cdot\big\|_{L^2(\R)}^2}.$$
The following proposition provides the well-posedness of~\eqref{0.1_h_bis} (and~\eqref{0.1_h}) in the space
$$H^1_h=\big\{\psi\in L^2(\R,\C)\,:\, (-\p_x+ x)\psi\in L^2(\R,\C)\big\},$$
endowed with the norm $$\|\cdot\|_{H^1_h}=\sqrt{\sum_{k\in\N}\lambda_k\big|\lag\cdot,\phi_k\rag_{L^2}\big|^2}=\sqrt{\sum_{k\in\N}(2k+1)\big|\lag\cdot,\phi_k\rag_{L^2}\big|^2}.$$

\begin{proposition}\label{well_h}
Let
$N \in \N^*$,
$A = \{a_j\}_{j \leq N} \subset (0,1)$ and
$T>0$.
Let $\mu\in \widetilde H^1(A)$, $\psi_0\in H^{1}_h$ and $f \in L^2( (0,T) , \widetilde H^{1}(A))$.
For any $u\in L^2( (0,T) ,\R)$, there exists a unique mild solution $\psi \in C( [0,T] ,H^{1}_h)$ to the problem~\eqref{0.1_h_bis} such that $\psi(0)=\psi_0$ that is a solution to
$$
\psi(t)=e^{-iH t} \psi_0-i\int_0^te^{-iH(t-s)}(u(s)\mu \psi(s) +  f(s))  d s, \quad t\in [0,T].
$$
Moreover, there is a constant~$C_T>0$ such that
$$\|\psi\|_{C( [0,T], H^{1}_h )}\le C_T \left (\|\psi_0\|_{H^1_h}+\|f\|_{L^2( (0,T) , \widetilde H^{1}(A))}\right).$$
\end{proposition}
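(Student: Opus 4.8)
The plan is to mirror the proof of Theorem~\ref{well_d} line by line: first prove the analogue of Lemma~\ref{wellposedness-bound-3.d} for the Duhamel operator associated with $H=-\Delta+x^2$, and then run the same contraction argument in $C([0,T],H^1_h)$. The two ingredients that replace the Dirichlet-specific computations are the ladder operator $a^\dagger=\tfrac1{\sqrt2}(x-\p_x)$, which satisfies $\sqrt{k}\,\phi_k=\tfrac1{\sqrt2}(x-\p_x)\phi_{k-1}$ and plays the role that $\p_x\cos(k\pi x)=-k\pi\sin(k\pi x)$ played before, and the uniform bound $\|\phi_k\|_{L^\infty(\R)}\le C_1$ from~\eqref{bound_h}, which replaces $|\cos(k\pi a_j)|\le1$. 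Note also that the eigenvalues $\lambda_k=2k+1$ have uniform gap $2$, so the Ingham-type estimate of Corollary~\ref{well-ingham} applies directly.

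For the key lemma I would show that $G(t):=\int_0^t e^{iHs}f(s)\,ds\in C([0,T],H^1_h)$ with $\|G\|_{C([0,T],H^1_h)}\le C_T\|f\|_{L^2((0,T),\widetilde H^1(A))}$. Decomposing $G(t)=\sum_{k\in\N}\phi_k\int_0^t e^{i\lambda_k s}\lag\phi_k,f(s)\rag_{L^2}\,ds$ gives $\|G(t)\|_{H^1_h}^2=\sum_{k}(2k+1)\,|\int_0^t e^{i\lambda_k s}\lag\phi_k,f(s)\rag_{L^2}\,ds|^2$, so (the $k=0$ term being harmless) everything reduces to estimating $\sqrt{k}\,\lag\phi_k,f(s)\rag_{L^2}$ for $k\ge1$. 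Using the ladder identity and integrating by parts piecewise over $\R\setminus A$ --- no contribution arises at $\pm\infty$ because of the Gaussian decay of the Hermite functions --- one obtains, with $g(s)$ the piecewise $\p_x f(s)$ (cut at $a_1,\dots,a_N$) and $w(s):=x f(s)+g(s)\in L^2(\R)$, an identity of the form $\sqrt{k}\,\lag\phi_k,f(s)\rag_{L^2}=\tfrac1{\sqrt2}\lag\phi_{k-1},w(s)\rag_{L^2}-\tfrac1{\sqrt2}\sum_{j=1}^{N}\phi_{k-1}(a_j)\big(\overline{f(s,a_j^-)}-\overline{f(s,a_j^+)}\big)$, where $\|w(s)\|_{L^2}\lesssim\|f(s)\|_{\widetilde H^1(A)}$ since $(1+|x|)f(s)\in L^2$ and $\p_x f(s)$ is piecewise $L^2$. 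For the first term, Cauchy--Schwarz in $s$ followed by Parseval in $k$ gives $\sum_{k\ge1}|\int_0^t e^{i\lambda_k s}\lag\phi_{k-1},w(s)\rag_{L^2}\,ds|^2\le t\int_0^t\|w(s)\|_{L^2}^2\,ds\lesssim t\,\|f\|_{L^2((0,t),\widetilde H^1(A))}^2$. For the boundary terms one uses $|\phi_{k-1}(a_j)|\le C_1$ and Corollary~\ref{well-ingham} to bound the sum over $k$ by $C\,\|f(\cdot,a_j^\pm)\|_{L^2(0,t)}^2$, and then the Sobolev trace inequality $H^1(I)\hookrightarrow C(\overline I)$ on each (bounded or half-infinite) interval $I$ of $\R\setminus A$ to dominate this by $\|f\|_{L^2((0,t),\widetilde H^1(A))}^2$. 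Combining the three contributions yields the claimed bound uniformly in $t\in[0,T]$, and the same bound applied to $G(t')-G(t)=\int_t^{t'}e^{iHs}f(s)\,ds$, together with $G\in C([0,T],L^2)$ (because $e^{iHs}$ is $L^2$-unitary), gives continuity into $H^1_h$.

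With the lemma in hand, the proof of Proposition~\ref{well_h} follows by the same fixed-point argument as in Theorem~\ref{well_d}: $(e^{-iHt})_{t\in\R}$ is a strongly continuous unitary group on $H^1_h$ (being diagonal in the Hermite basis), and one applies Banach's fixed point theorem to $\psi\mapsto e^{-iHt}\psi_0-i\int_0^te^{-iH(t-s)}(u(s)\mu\psi(s)+f(s))\,ds$ on $C([0,\tau],H^1_h)$ for $\tau$ small. The one extra verification needed is that $\mu\psi(s)\in\widetilde H^1(A)$ with $\|\mu\psi(s)\|_{\widetilde H^1(A)}\le C\|\mu\|_{\widetilde H^1(A)}\|\psi(s)\|_{H^1_h}$, so that $u\mu\psi+f\in L^2((0,\tau),\widetilde H^1(A))$ and the lemma applies; this follows from the embedding $H^1_h\hookrightarrow H^1(\R)\hookrightarrow L^\infty(\R)$ --- a consequence of the identity $\|(x-\p_x)\psi\|_{L^2}^2=\|x\psi\|_{L^2}^2+\|\p_x\psi\|_{L^2}^2+\|\psi\|_{L^2}^2$ --- together with the stability of $H^1$ under products on each interval and the fact that $(1+|x|)\mu\in L^2$, $\psi\in L^\infty$ force $(1+|x|)\mu\psi\in L^2$. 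The contraction constant is $\le C_\tau\|\mu\|_{\widetilde H^1(A)}\|u\|_{L^2(0,\tau)}$, which is $<1$ for $\tau$ small since $C_\tau$ stays bounded; iterating over finitely many subintervals covers $[0,T]$ and produces the unique mild solution, and inserting the lemma's estimate into the Duhamel formula plus a Gr\"onwall argument in integral form gives the quantitative bound.

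The main obstacle is the boundary-term estimate in the key lemma. Since $\sum_k|\phi_k(a_j)|^2=+\infty$ for a fixed point $a_j$, a crude Cauchy--Schwarz in $k$ is hopeless, and one genuinely needs two harmonic-oscillator-specific facts at once: the \emph{uniform} $L^\infty$ bound~\eqref{bound_h} on the Hermite functions (coming from the Hilb-type asymptotics in the Remark above) and the Ingham/Bessel inequality for the equally spaced spectrum $\lambda_k=2k+1$. A secondary, but essential, point is the embedding $H^1_h\hookrightarrow H^1(\R)$, which is what lets $\mu\psi$ return to $\widetilde H^1(A)$ and thus closes the fixed point; the rest of the argument is a routine adaptation of the proof of Theorem~\ref{well_d}.
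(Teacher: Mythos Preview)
Your proposal is correct and follows essentially the same route as the paper: both use the ladder identity $\sqrt{2k}\,\phi_k=(x-\p_x)\phi_{k-1}$ to integrate by parts piecewise, obtain the bulk term $\lag\phi_{k-1},xf+f'\rag_{L^2}$ plus jump terms $\phi_{k-1}(a_j)[f(a_j^+)-f(a_j^-)]$, and then invoke the uniform bound~\eqref{bound_h} together with Corollary~\ref{well-ingham} before closing with the fixed-point argument of Theorem~\ref{well_d}. Your write-up is in fact more detailed than the paper's (which is a three-line sketch), in particular your explicit check that $\mu\psi\in\widetilde H^1(A)$ via $H^1_h\hookrightarrow H^1(\R)\hookrightarrow L^\infty(\R)$ is a point the paper leaves implicit.
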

\begin{proof}
The proof is again very similar to that of Theorem~\ref{well_d}: Recall that $D(|H|^\frac{1}{2}) = H_h^1$. The central part consists in estimating $\sqrt{ 2k+1}\lag \phi_k,f\rag_{L^2}$ for every $k\in\N$ when $f \in L^2((0,T), \widetilde{H}^{1}(A))$. The properties of the Hermite functions imply
\begin{align*}
\sqrt{2k}\lag \phi_k,f(s)\rag_{L^2}&=\lag (x-\p_x)\phi_{k-1},f(s)\rag_{L^2}=\lag\phi_{k-1}, \big(xf(s)+f'(s)\big)\rag_{L^2}\\
&+\sum_{l=1}^N\phi_{k-1}(a_l)[f(s,a_l^+)-f(s,a_l^-)],
\end{align*}
for every $k\in\N^*$. Thanks to~\eqref{bound_h}, the proof follows as for Theorem~\ref{well_d}.
\end{proof}

\subsection{Controllability of the linearised system}
One may wonder if the local exact controllability holds in the space $H^1_h$, for instance around the ground state $\phi_0$. The result seems provable if there exists $C > 0$ satisfying
\begin{equation}\label{badbad}
|\lag \mu\phi_0, \phi_{k}\rag_{L^2}| \ge \frac{C}{\sqrt{\lambda_k}}=\frac{C}{\sqrt{2k+1}},\quad  \forall k\in\N.
\end{equation}
However,~\eqref{badbad} cannot be verified when $\mu \in L^2(\R,\C)$ since $(\lag \mu\phi_0, \phi_{k}\rag_{L^2})_{k\in\N}\in\ell^2$. From this perspective, it seems to be necessary to establish the well-posedness of~\eqref{0.1_h} in a higher regularity space via a stronger regularising effect than the one used in Proposition~\ref{well_h}.

In order to identify a suitable space for this purpose, we can study a linearised equation of~\eqref{0.1_h} and investigate the space where its global exact controllability can be ensured.
We consider thus the linearised equation of~\eqref{0.1_h} with respect to the eigensolution $\phi_0(t)=e^{-i\lambda_0 t}\phi_0$ corresponding to the control $u=0$. The linear system reads
\begin{equation}\label{lin_h}\begin{cases}
 i \p_t \xi=H \xi + v(t)\mu (x) \phi_0(t), & \quad x\in \R,\ t>0,\\
 \xi(0,x)=0 & \quad x\in \R.
\end{cases}
\end{equation}
For $a\in\R$, we introduce the space
$$H_{h,a}=\{\psi\in L^2\,:\, \|\psi\|_{H_{h,a}}<\infty\},$$
where
$$\|\cdot\|_{H_{h,a}}=\sqrt{
|\lag \cdot,\phi_0\rag_{L^2}|^2
+
\sum_{k\in\N*}{k}{\phi_{k-1}^{-2}(a)}|\lag \cdot,\phi_k\rag_{L^2}|^2
}.$$
We also consider the space
$$X_t=\{\psi\in L^2(\R,\C)\,:\, \Re\big(\lag\psi,\phi_0(t) \rag_{L^2}\big)=0\},\quad\quad t>0.$$ 
We show that, when $\mu= 1_{[a,+\infty)}$, a suitable space where the global exact controllability of~\eqref{lin_h} can be ensured is
$$H_{h,a}\cap X_T.$$
Note that the asymptotic behaviour of Hermite's functions~\eqref{bound_h} infers
$$H^1(\R,\C)\supset D(|H|^\frac{3}{4})\supseteq H_{h,a}.$$
From this perspective, if we want to apply the Inverse Mapping Theorem as in the previous frameworks (Dirichlet or Periodic case), a stronger well-posedness result than Proposition~\ref{well_h} is required.

\begin{theorem}\label{theoremh}
Let $a\in\R$ and $\mu=1_{[a,+\infty)}$. We have the two following properties.

\smallskip

\noindent
{\bf 1.} Let $T>0$ and $u\in L^2( (0,T) ,\R)$. There exists a unique mild solution $\xi \in C( [0,T] ,H_{h,a})$ to the problem~\eqref{lin_h} such that $
 \xi(t)\in H_{h,a}\cap X_t$ for every $t\in [0,T]$ and defined by the Duhamel's formula:
$$
 \xi(t)=-i\int_0^te^{-iH(t-s)} u(s)\mu\phi_0(s)\,\mathrm{d}s\in H_{h,a}\cap X_t, \quad t\in [0,T] .$$

\smallskip

\noindent
{\bf 2.} For any $T>\pi$ and $\psi_1 \in H_{h,a}\cap X_T$, 
 there exists a control $u\in L^2((0,T),\R)$ such that the corresponding mild solution $\xi$ to the problem~\eqref{lin_h} with $\xi(0)=0$ satisfies $$\xi(T)= \psi_1.$$
 \end{theorem}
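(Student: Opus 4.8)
The strategy follows the template already used for Propositions~\ref{lin-exact-d} and~\ref{lin-exact-p}: reduce the exact controllability of the linearised equation to a trigonometric moment problem, and solve that moment problem with an Ingham-type result (Proposition~\ref{solvability_moment_problem}). The two points of the theorem must be handled together, since they share the same Fourier-side analysis: the key quantitative input is the asymptotic $|\phi_{k-1}(a)|\asymp k^{-1/4}$ coming from~\eqref{bound_h}, which is exactly what makes the weighted space $H_{h,a}$ the natural target.

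\textbf{Step 1 (Fourier decomposition and the moment problem).} Write the Duhamel solution $\xi(t)=-i\int_0^t e^{-iH(t-s)}u(s)\mu\phi_0(s)\,\mathrm{d}s$ and project on the Hermite basis. Since $H\phi_k=\lambda_k\phi_k$ with $\lambda_k=2k+1$ and $\phi_0(s)=e^{-is}\phi_0$, one gets for every $k\in\N$
\[
\lag\xi(T),\phi_k\rag_{L^2}=-i\,e^{-i\lambda_k T}\lag\mu\phi_0,\phi_k\rag_{L^2}\int_0^T e^{i(\lambda_k-\lambda_0)s}u(s)\,\mathrm{d}s,
\]
so that controllability to $\psi_1$ amounts to solving the moment problem $\int_0^T e^{i(\lambda_k-\lambda_0)s}u(s)\,\mathrm{d}s=x_k$ with $x_k=i e^{i\lambda_k T}\lag\xi(T),\phi_k\rag_{L^2}/\lag\mu\phi_0,\phi_k\rag_{L^2}$. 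The frequencies are $\lambda_k-\lambda_0=2k$, a uniformly gapped and asymptotically arithmetic sequence, so Ingham's theorem applies for any $T>\pi$ (this is where the hypothesis $T>\pi$ enters), and one also needs the reality constraint coming from $u$ real, i.e. $x_0\in\R$, which corresponds precisely to $\xi(T)\in X_T$.

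\textbf{Step 2 (identifying $\lag\mu\phi_0,\phi_k\rag_{L^2}$ for $\mu=1_{[a,\infty)}$).} The crucial computation is that $\lag 1_{[a,\infty)}\phi_0,\phi_k\rag_{L^2}=\int_a^\infty\phi_0(x)\phi_k(x)\,\mathrm{d}x$ can be evaluated in closed form using the creation/annihilation relations: since $\sqrt{2k}\,\phi_k=(x-\p_x)\phi_{k-1}$ and $\phi_0(x)\propto e^{-x^2/2}$ with $(x-\p_x)$ acting nicely against $\phi_0$, integration against $1_{[a,\infty)}$ produces a boundary term at $x=a$ proportional to $\phi_{k-1}(a)\phi_0(a)$ (the bulk term vanishes because $\phi_0$ is itself a lowest weight vector). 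Concretely one expects $\lag\mu\phi_0,\phi_k\rag_{L^2}=c\,\phi_{k-1}(a)/\sqrt{k}$ for $k\ge 1$ and a harmless nonzero constant for $k=0$. Hence $1/\lag\mu\phi_0,\phi_k\rag_{L^2}\asymp \sqrt{k}/\phi_{k-1}(a)$, and by definition of $\|\cdot\|_{H_{h,a}}$ the sequence $(x_k)_{k}$ lies in $\ell^2$ if and only if $\xi(T)\in H_{h,a}$. This is the matching of the "spectral" weight with the function-space weight, exactly as in the Dirichlet case.

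\textbf{Step 3 (putting it together, and well-posedness).} For point~1, the estimate $\|\xi(t)\|_{H_{h,a}}\le C_T\|u\|_{L^2(0,T)}$ follows by the same argument read backwards: $\|\xi(t)\|_{H_{h,a}}^2=\sum_k w_k|\lag\xi(t),\phi_k\rag|^2$ with $w_k\asymp k/\phi_{k-1}^2(a)$, and $w_k|\lag\mu\phi_0,\phi_k\rag|^2\asymp 1$, so Bessel/Ingham's lower-bound-free direction (Parseval plus $|\int_0^t e^{i2ks}u|^2\le t\int_0^t|u|^2$, or more sharply the Ingham upper bound) gives the bound; continuity in $t$ and membership in $X_t$ (from $u$ real) are then routine, and uniqueness is immediate since the equation is affine with the explicit Duhamel formula. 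For point~2, given $\psi_1\in H_{h,a}\cap X_T$, Step 2 shows $(x_k)\in\ell^2_r$, Step 1's frequencies satisfy the Ingham gap condition for $T>\pi$, so Proposition~\ref{solvability_moment_problem} yields a real $u\in L^2(0,T)$ solving the moments, and the corresponding $\xi$ satisfies $\xi(T)=\psi_1$.

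\textbf{Main obstacle.} The delicate point is Step 2: getting the \emph{exact} identity $\lag 1_{[a,\infty)}\phi_0,\phi_k\rag_{L^2}=c\,\phi_{k-1}(a)/\sqrt{k}$ (or at least two-sided bounds of this form uniformly in $k$), because this is what forces the weird-looking weight $k\,\phi_{k-1}^{-2}(a)$ in the definition of $H_{h,a}$ and makes everything fit. One must be careful that $\phi_{k-1}(a)\ne 0$ for all $k$ — generically true for fixed $a$, but if $a$ is a zero of some Hermite function the weight degenerates and the space (and the argument) must be read accordingly. A secondary technical nuisance is verifying that $H_{h,a}\subset D(|H|^{3/4})\subset H^1(\R)$ via~\eqref{bound_h}, which is needed only to reassure that the target space is genuinely a subspace of $H^1$ and not something exotic; this is a short computation comparing the weights $k/\phi_{k-1}^2(a)\lesssim k\cdot k^{1/2}=k^{3/2}=\lambda_k^{3/2}$.
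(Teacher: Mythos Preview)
Your proposal is correct and follows essentially the same approach as the paper: compute $\lag\mu\phi_0,\phi_k\rag_{L^2}=c\,\phi_{k-1}(a)/\sqrt{k}$ (the paper does this via the Rodrigues formula rather than the ladder identity $(x-\p_x)\phi_{k-1}=\sqrt{2k}\,\phi_k$, but the two computations are equivalent), use this with Corollary~\ref{well-ingham} for the well-posedness bound in $H_{h,a}$, and then invoke Proposition~\ref{solvability_moment_problem} on the frequencies $\lambda_k-\lambda_0=2k$ for $T>\pi$ to solve the moment problem. Your remark on the possible vanishing of $\phi_{k-1}(a)$ is a legitimate caveat that the paper does not address explicitly either.
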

\begin{proof}{\bf 1.} The first point is a consequence of the following identity with $k\in\N^*$:
\begin{align}
 \lag \phi_{k}, \mu\phi_0 (t)\rag_{L^2(\R)} &= \lag \phi_{k}, \phi_0(t) \rag_{L^2[a,+\infty)} = \frac{(-1)^ke^{-i\lambda_0 t}}{\sqrt{\pi 2^{k+1} k!}} \lag (e^{-x^2/2})^{(k)}, H_0 \rag_{L^2[a,+\infty)} \nonumber\\
 &= -\frac{(-1)^ke^{-i t}}{\sqrt{\pi 2^{k+1} k!}} \big[ (e^{-x^2/2})^{(k-1)} \big](x=a^+) = \frac{e^{-i  t}}{2 \sqrt{k \sqrt{\pi}}} \phi_{k-1}(a).
 \label{ee}
\end{align}
The last identity and Corollary~\ref{well-ingham} yields the existence of $C>0$ uniform in $[0,T]$ such that
\begin{align*}
\|G(t)\|_{ H_{h,a} }^2&= \Big| \int_0^t  \|\phi_0\|^2_{L^2[a,+\infty)}\,\mathrm{d}s\Big|^2+\sum_{k\in\N^*}\frac{k}{\phi_{k-1}^2(a)}\Big| \int_0^t e^{i\lambda_ks } \lag \phi_k,\mu\phi_0(s)\rag_{L^2}\,\mathrm{d}s\Big|^2  \\
&= t^2\|\phi_0\|^4_{L^2[a,+\infty)}+\frac{1}{4 \sqrt{\pi}}\sum_{k\in\N^*}\Big|\int_0^t e^{i(\lambda_k-1)s }\,\mathrm{d}s\Big|^2<C.
\end{align*}
Now, $\xi(t)\in H_{h,a}$ for every $t\in [0,T]$ and $\xi(t)\in X_T$ since $u$ and $\mu$ are real valued. Finally, the last relation concludes the proof of the first point of the proposition as in Lemma~\ref{wellposedness-bound-3.d}.

\smallskip

\noindent
{\bf 2.} The second point follows by the approach leading to Proposition~\ref{lin-exact-d}. The controllability can be ensured by proving the solvability of a moment problem:
\begin{align*}
\int_0^T e^{i (\lambda_{k}-\lambda_0) s} v(s) ds &= \frac{i e^{i\lambda_{k} T}\lag \xi(T),\phi_{k}\rag_{L^2}}{\lag \mu\,\phi_0 ,\phi_k\rag_{L^2}}
=:x_k,\qquad  k\in\N. 
\end{align*}
Note that, thanks to~\eqref{ee}, we have $$x_k=-i 2 \pi^\frac{1}{4} \sqrt{k } \phi_{k-1}(a) \lag \xi(T),\phi_{k}(T)\rag_{L^2}$$
and then $(x_k)_{k\in\N}\in \ell^2_r(\N,\C)=\left\{(x_k)_{k\in\N}\in \ell^2: x_0\in\R\right\}$ since $\xi(T)\in H_{h,a}\cap X_T$. Finally, the solvability of the moment problem is implied for $T>\pi$ by Proposition~\ref{solvability_moment_problem}.\end{proof}

\newpage

\appendix

\section{Proof of Propositions~\ref{smooth_d} and~\ref{smooth_p}}\label{endpoint}

\begin{proof}[Proof of Proposition~\ref{smooth_d}]
Let us consider $l=1$. The proof is the same in the general case. Thanks to Theorem~\ref{well_d}, the map $u\in L^2((0,T),\R)
\mapsto \Psi_T^1\in H^1_0\cap \TT_{\phi_1(T)}$ is continuous.

\smallskip
\noindent
{\bf 1) Differentiability.} We introduce the following elements.
\begin{itemize}
 \item Let $\psi$ be the mild solution
to~\eqref{0.1_d} with $u\in L^2((0,T),\R)$ and $\psi_0=\phi_1$:
 \[
  \psi(t)=e^{i\Delta t} \phi_1-i\int_0^te^{i\Delta(t-s)} u(s)\mu \psi(s)\,\mathrm{d} s.
 \]
 \item Let $\xi$ be the corresponding linearised mild solution 
 with $v\in L^2((0,T),\R)$:
 \[
   \xi(t)=-i \int_0^t e^{i \Delta (t-s)}\left(u(s)\mu \xi(s)+ v(s)\mu \psi (s)\right)\, \mathrm{d}s.
 \]
 \item Let $\widetilde \psi$ be the mild solution to~\eqref{0.1_d} with control $u+v$ and $\psi_0=\phi_1$:
 \[
  \widetilde{\psi}(t)=e^{i\Delta t} \phi_1-i\int_0^te^{i\Delta(t-s)} (u(s)+v(s))\mu \widetilde{\psi}(s)\,\mathrm{d} s.
 \]
\end{itemize}
Let $\varphi = \widetilde \psi - \psi -\xi$, then
\[
 \varphi(t)=-i\int_0^te^{i\Delta(t-s)} (u(s)+v(s))\mu \varphi(s)\,\mathrm{d} s
 -i\int_0^te^{i\Delta(t-s)} v(s) \mu \xi(s)\,\mathrm{d} s.
\]
Let $R>0$ such that $\|u\|_{L^2},\|u+v\|_{L^2}< R$. Thanks to Theorem~\ref{well_d}, there exist $C_1,C_2>0$ uniformly bounded in $[0,T]$ such that
\begin{align*}
 \|\varphi\|_{C ([0,T],H^1_0)}& \leq C_1 \| v \mu \xi \|_{L^2((0,T), H^1(A))}\leq C_2 \| v\|_{L^2} \|\xi \|_{C ([0,T],H^1_0)}\\
 &\leq C_2 C_1 \| v\|_{L^2} \| v \mu \psi \|_{L^2((0,T), H^1(A))}
\leq C_2^2 \| v\|_{L^2}^2 \| \psi \|_{C ([0,T],H^1_0)}\\
&\leq C_2^2C_1 \| v\|_{L^2}^2 \| \phi_1 \|_{H^1_0} = \pi C_2^2C_1 \| v\|_{L^2}^2.
\end{align*}
The last relation proves that $\|\varphi\|_{C ([0,T],H^1_0)}=O(\|v\|_{L^2}^2)$ and ensure the differentiability of $\Psi_T^1$.

\smallskip

\noindent
{\bf 2) Continuity of the differential.} We introduce the following elements for $u^1,u^2,v\in L^2((0,T),\R).$
\begin{itemize}
 \item Let $\psi^1$ be the mild solution to~\eqref{0.1_d} with control $u^1$ and $\psi_0=\phi_1$:
 \[
  \psi^1(t)=e^{i\Delta t} \phi_1-i\int_0^te^{i\Delta(t-s)} u^1(s)\mu \psi^1(s)\,\mathrm{d} s.
 \]
 \item Let $\xi^1$ be the corresponding linearised mild solution
 with control $v$:
 \[
   \xi^1(t)=- i \int_0^t e^{i \Delta (t-s)}\left(u^1(s)\mu \xi^1(s)+ v(s)\mu \psi^1 (s)\right)\, \mathrm{d}s.
 \]
 \item Let $\psi^2$ be the mild solution to~\eqref{0.1_d} with control $u^2$ and $\psi_0=\phi_1$:
 \[
  \psi^2(t)=e^{i\Delta t} \phi_1-i\int_0^te^{i\Delta(t-s)} u^2(s)\mu \psi^2(s)\,\mathrm{d} s.
 \]
 \item Let $\xi^2$ be the corresponding linearised mild solution
 with control $v$:
  \[
   \xi^2(t)=-i \int_0^t e^{i \Delta (t-s)}\left(u^2(s)\mu \xi^2(s)+ v(s)\mu \psi^2 (s)\right)\, \mathrm{d}s.
 \]

\end{itemize}
We have $\big[\partial_{u} \Psi_T^1( u^1)-\partial_{u} \Psi_T^1( u^2)\big] \cdot v=P_T^1\big(\xi^1(T) - \xi^2(T) \big) =P_T^1(\theta)$ where
\begin{align*}
\theta(t)&=-i \int_0^t e^{i \Delta (t-s)} u^1(s)\mu \theta(s)\, \mathrm{d}s
-i \int_0^te^{i \Delta (t-s)} (u^1(s)-u^2(s))\mu \xi^2(s)\, \mathrm{d}s\\
&\quad-i \int_0^te^{i \Delta (t-s)} v(s)\mu \left(\psi^1(s)-\psi^2(s)\right)\, \mathrm{d}s.
\end{align*}
Let $R>0$ such that $\|u^1\|_{L^2},\|u^2\|_{L^2}< R$. Note now that
\begin{align*}
\psi^1(t)-\psi^2(t)&=-i \int_0^te^{i \Delta (t-s)} u^1(s)\mu (\psi^1(s)-\psi^2(s))\,\mathrm{d}s\\
 &\qquad-i \int_0^te^{i \Delta (t-s)} (u^2(s)-u^1(s))\mu \psi^2(s)\,\mathrm{d}s
\end{align*}
and hence
Theorem~\ref{well_d} yields the existence of $C_1>0$ uniformly bounded in $[0,T]$, depending on $R$, such that
\begin{align}\label{bound_d}\|\psi^1-\psi^2\|_{C ([0,T],H^1_0)}\leq C_1 \|(u^1-u^2)\mu \psi^2\|_{L^2((0,T), H^1(A))}.\end{align}
From~\eqref{bound_d} and Theorem~\ref{well_d}, there exists $C_2>0$ uniformly bounded in $[0,T]$, depending on $R$, such that
\begin{align*}
 \|\theta\|_{C ([0,T],H^1_0)}& \leq C_1 \big\| \big(u^1(t)-u^2(t)\big)\mu(x) \xi^2+ v(t)\mu (x) \big(\psi^1-\psi^2\big) \big\|_{L^2((0,T), H^1(A))}\\
 &\leq C_2 \Big( \|u^1-u^2 \|_{L^2} \|\xi^2 \|_{C ([0,T],H^1_0)}+ \|v \|_{L^2} \|\psi^1-\psi^2\|_{C ([0,T],H^1_0)}\Big) \\
 &\leq C_2^2 \Big( \|u^1-u^2 \|_{L^2}\|v \|_{L^2} \|\psi^2 \|_{C ([0,T],H^1_0)}\\
 &\qquad+\|v \|_{L^2} \|u^1-u^2 \|_{L^2}\|\psi^2\|_{C ([0,T],H^1_0)}\Big) \\
 &\leq 2 C_1 C_2^2 \|u^1-u^2 \|_{L^2}\|v \|_{L^2} \|\phi_1 \|_{H^1_0}\leq 2\pi R C_1 C_2^2 \|u^1-u^2 \|_{L^2}.
\end{align*}
The last relation infers that $\partial_{u} \Psi_T^1( u)$ is actually locally Lipschitz in $u$ ensuring the continuity of the differential. The genral case of $l\in\N^*$ is proved in the very same way.
\end{proof}

\begin{proof}[Proof of Proposition~\ref{smooth_p}]
The proof is the same as that of Proposition~\ref{smooth_d}, with the difference that we need to use Theorem~\ref{well_p} instead of Theorem~\ref{well_d}. In this case, the operator $H$ replaces the Dirichlet Laplacian and $H^1_p$  the space $H^1_0$.
\end{proof}

\section{Solvability of moment problems}
\label{mome_appendix}
In this appendix, we present two results on the solvability of moment problems, which are used in various parts of this work. Very similar statements can be found in \cite[Appendix B.2]{BL-2010} or \cite[Appendix A]{Duc20b} but they do not directly apply to our frameworks. For the sake of completeness, we provide their proofs.

\begin{proposition}\label{solvability_moment_problem}
Let $(\lambda_k)_{k\in\Z}$ be a sequence of real numbers such that
\[
\lambda_0=0,\qquad \lambda_k\neq \pm \lambda_j,\quad\forall k\in\Z,\,\forall j\in\Z,\,k\neq j.
\]
Let $(\mu_k)_{k\in\Z}$ be the increasing sequence such that  
$$\big\{\mu_k,\, k\in \Z\big\}=\big\{\lambda_0,\,\pm \lambda_k\mid \,k\in \N^*\big\}.$$
Assume $\inf_{k\neq
j}|\mu_{k}-\mu_{j}|>0$. Let
$$\gamma:=\sup_{\substack{K\subset\Z\\K \text{ finite }}}\inf_{\underset{k\neq j}{k,j\in\Z\setminus K}}|\mu_{k}-\mu_{j}|.$$ 
Fix $T>2\pi/\gamma$, for every $(x_k)_{k\in\Z}\in \ell^2(\C)$ such that $x_0\in\R$, there exists $u\in L^2((0,T),\R)$ such that
\begin{equation*}
x_k=\int_0^Tu(s)e^{-i\lambda_k s}ds,\qquad \forall k\in
\Z.\end{equation*}
\end{proposition}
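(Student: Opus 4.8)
The statement is a standard real-valued moment problem and the natural route is to pass through a complex-valued version and then symmetrise. The key external ingredient is an Ingham-type theorem: if $(\mu_k)_{k\in\Z}$ is a sequence of real exponents with asymptotic gap $\gamma$ (in the sense of the quantity defined in the statement), then for $T > 2\pi/\gamma$ the family $(e^{i\mu_k t})_{k\in\Z}$ is a Riesz basis of its closed span in $L^2((0,T),\C)$; equivalently, the map $u \mapsto \big(\int_0^T u(s) e^{-i\mu_k s}\,ds\big)_{k\in\Z}$ from $L^2((0,T),\C)$ onto $\ell^2(\Z,\C)$ (restricted to the span) is an isomorphism. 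This is exactly the hypothesis packaged into the definition of $\gamma$ and the condition $T > 2\pi/\gamma$, so I would invoke it directly (it is the content of the Ingham-type result the paper refers to elsewhere, e.g.\ near Corollary~\ref{well-ingham}).

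\textbf{Step 1: complex solvability.} Given $(x_k)_{k\in\Z}\in\ell^2(\C)$, first reindex: the exponents actually occurring are $\lambda_0=0$ together with $\pm\lambda_k$ for $k\in\N^*$, and these are pairwise distinct by hypothesis, with $\inf_{k\neq j}|\mu_k-\mu_j|>0$ guaranteeing separation and $\gamma>0$ giving the asymptotic gap. By the Ingham-type theorem, since $T>2\pi/\gamma$, there exists $w\in L^2((0,T),\C)$ with $\int_0^T w(s)e^{-i\mu_k s}\,ds = y_k$ for any prescribed $(y_k)_{k\in\Z}\in\ell^2(\C)$. So complex solvability is immediate; the real issue is to produce a \emph{real-valued} $u$.

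\textbf{Step 2: symmetrisation to get a real control.} Note the key symmetry: the set of exponents $\{0\}\cup\{\pm\lambda_k\}$ is invariant under $\mu\mapsto-\mu$. For a real-valued $u$, $\overline{\int_0^T u(s)e^{-i\mu s}\,ds} = \int_0^T u(s)e^{i\mu s}\,ds$, so the moments at $\mu$ and $-\mu$ must be complex conjugates, and the moment at $\mu=0$ must be real. The hypothesis $x_0\in\R$ is precisely the compatibility condition at the fixed point $\mu=0$; for the paired exponents I would \emph{symmetrise the data}: set $\tilde x$ to be the sequence obtained by forcing, for each $k\in\N^*$, the moment at $\lambda_k$ to be $x_k$ and the moment at $-\lambda_k$ to be $\overline{x_k}$ (this is consistent since $\lambda_k\neq-\lambda_j$ for all relevant indices, so no exponent receives two conflicting demands), and keep the moment at $0$ equal to $x_0\in\R$. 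Apply Step~1 to get $w\in L^2((0,T),\C)$ realising $\tilde x$. Then $u:=\tfrac12(w+\bar w)=\Re w$ is real-valued, and one checks that $\int_0^T u(s)e^{-i\mu_k s}\,ds = \tfrac12\big(\tilde x_{\mu_k} + \overline{\tilde x_{-\mu_k}}\big)$, which by the conjugate-symmetric construction equals $\tilde x_{\mu_k}$; in particular it equals $x_k$ at each $\mu_k=\lambda_k$, $k\in\N^*$, and $x_0$ at $\mu_0=0$. That is the required identity for all $k\in\Z$ (where the indexing in the conclusion runs over $\Z$ but the distinct exponents are $0$ and $\pm\lambda_k$).

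\textbf{Main obstacle.} The only genuinely delicate point is making sure the reindexing and the conjugate-symmetrisation are done without double-counting: one must be careful that $0\notin\{\lambda_k:k\in\Z\}$ beyond $k=0$ and that $\lambda_k\neq-\lambda_j$, both of which are in the hypotheses, so that the map from $\mathbb{Z}$-indexed data $(x_k)$ to the $(\mu_k)$-indexed symmetrised data is well-defined and the $\ell^2$ norm is preserved up to a factor of $2$. Everything else is a black-box application of the Ingham-type Riesz-basis theorem, whose quantitative hypothesis ($T>2\pi/\gamma$, uniform separation) matches the statement exactly. I would therefore structure the write-up as: (i) recall/cite the Ingham theorem in the form of surjectivity of the complex moment map; (ii) symmetrise the data using $x_0\in\R$; (iii) take real parts and verify the moments.
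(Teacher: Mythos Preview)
Your proposal is correct and follows essentially the same route as the paper: invoke the Ingham--type Riesz basis result (the paper cites \cite[Theorem 4.6]{komornik}) for the symmetric family $\{e^{i\mu_k t}\}$, symmetrise the data $(x_k)$ to a conjugate-symmetric sequence on the $\mu$-indices, solve the complex moment problem, and then recover a real control. The only cosmetic difference is in the last step: the paper takes the complex solution $u$ in the closed span $X$, observes that $X$ is conjugation-invariant, and shows $\Im(u)\in X$ has all vanishing Riesz coefficients hence is zero; you instead set $u=\Re(w)$ and recompute the moments directly. Both arguments are equivalent and equally short.
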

\begin{proof}
Let $T>2\pi/\gamma$. Thanks to \cite[Theorem 4.6]{komornik}, the functions $\{e^{i\mu_k t}\}_{k\in\Z}$ form a Riesz Basis of the space
$X:=\overline{span\{e^{i\mu_k t}:k\in\Z\}}^{ L^2}\subseteq L^2((0,T),\C).$ Hence, the map $$M:g\in X\longmapsto \big(\lag
g,e^{i\mu_k t}\rag_{L^2}\big)_{k\in\Z}\in \ell^2$$ is invertible.
Let $(x_k)_{k\in\Z}\in\ell^2$. We define $(y_k)_{k\in\Z}\in\ell^2$ as follows.

\begin{itemize}
 \item For every $k\in\Z$ such that $\mu_k=\lambda_j$ with $j\in\Z^*$, we denote $y_k=x_j$.
 \item For every $k\in\Z$ such that $\mu_k=-\lambda_j$ with $j\in\Z^*$, we denote $y_k=\overline{x_j}$.
 \item For $k\in\Z$ such that $\mu_k=\lambda_0$, we denote $y_k=x_0$.
\end{itemize}
Now, the invertibility of the map $M$ yields the existence of $u\in
X\subseteq L^2((0,T),\C)$
such that
\begin{equation*}{y_{k}}=\int_{0}^Tu(t)e^{-i\mu_k t}dt,\qquad  \forall
k\in\Z.\end{equation*}
The last relation infers, for every $k\in\Z$,
\begin{equation}\label{pie}
{x_{k}}=\int_{0}^Tu(t)e^{-i\lambda_k t}dt,\quad
{\overline{x_{k}}}=\int_{0}^Tu(t)e^{i\lambda_k t}dt, \quad
x_0=\int_{0}^Tu(t)dt.
\end{equation}
The relations~\eqref{pie} and $x_0\in\R$ imply, for every $k\in\Z$,
\begin{equation}\label{pie_1}
\overline{x_{k}}=\int_{0}^T\overline{u}(t)e^{i\lambda_k t}dt,\quad
{{x_{k}}}=\int_{0}^T\overline{u}(t)e^{-i\lambda_k t}dt,\quad
0=\int_{0}^T \Im(u)(t)dt.
\end{equation}
Thanks to~\eqref{pie}-\eqref{pie_1}, we have, for every $k\in\Z$,
\begin{equation*}
0=\int_{0}^T \Im(u)(t)e^{i\lambda_k t}dt=\int_{0}^T \Im(u)(t)e^{-i\lambda_k t}dt=\int_{0}^T \Im(u)(t)dt.
\end{equation*}
The last relations imply $\lag \Im(u), e^{i\mu_k}\rag_{L^2}=0$ for every $k\in\Z.$ Finally, by definition of $(\mu_k)_{k\in\Z}$, since $X$ is invariant by complex conjugation, $\overline{u}\in X$ and $\Im(u)\in X$.
Finally, $u\in L^2((0,T),\R)$ and the first relation of~\eqref{pie} ensures the result.
\end{proof}

\begin{corollary}\label{well-ingham}
Let $(\lambda_k)_{k\in\Z}$ be a sequence of real numbers such that $\inf_{k\neq
j}|\lambda_{k}-\lambda_{j}|>0$.
For every $T>0$, there exist $C(T)>0$ such that, for every $u\in L^2((0,T),\C)$,
$$\Big\|\int_{0}^T u(t) e^{i\lambda_k t}dt\Big\|_{\ell^2}\leq C\|u\|_{L^2(0,T)}.$$
\end{corollary}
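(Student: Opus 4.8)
The statement is the ``easy'' (Bessel, or upper) half of Ingham's inequality, and I would deduce it from the Riesz basis property already used in the proof of Proposition~\ref{solvability_moment_problem}. Set $\delta:=\inf_{k\neq j}|\lambda_k-\lambda_j|>0$. First I would reduce to the regime $T>2\pi/\delta$: given $0<T\le 2\pi/\delta$ and $u\in L^2((0,T),\C)$, extend $u$ by zero to $\tilde u\in L^2((0,T_0),\C)$ with $T_0:=1+2\pi/\delta$; then $\int_0^{T_0}\tilde u(t)e^{i\lambda_k t}\,dt=\int_0^T u(t)e^{i\lambda_k t}\,dt$ for every $k\in\Z$ and $\|\tilde u\|_{L^2(0,T_0)}=\|u\|_{L^2(0,T)}$, so a bound valid for $T_0$ transfers verbatim to $T$ with the constant $C(T_0)$. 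Since the corollary allows the constant to depend on $T$, it is therefore enough to establish the estimate for each fixed $T>2\pi/\delta$.

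So fix $T>2\pi/\delta$. The sequence $(-\lambda_k)_{k\in\Z}$ is again uniformly separated with gap $\delta$, so, exactly as in the proof of Proposition~\ref{solvability_moment_problem}, \cite[Theorem 4.6]{komornik} applies and $\{e^{-i\lambda_k t}\}_{k\in\Z}$ is a Riesz basis of its closed linear span $X:=\overline{\mathrm{span}\{e^{-i\lambda_k t}:k\in\Z\}}^{\,L^2}\subseteq L^2((0,T),\C)$. A Riesz basis is in particular a Bessel system, so there is $C=C(T)>0$ with $\sum_{k\in\Z}\big|\lag g,e^{-i\lambda_k t}\rag_{L^2}\big|^2\le C\,\|g\|_{L^2}^2$ for every $g\in X$. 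This Bessel bound extends to all of $L^2((0,T),\C)$: if $P_X$ denotes the orthogonal projection onto $X$, then for any $u\in L^2$ we have $\lag u,e^{-i\lambda_k t}\rag_{L^2}=\lag P_X u,e^{-i\lambda_k t}\rag_{L^2}$ because each $e^{-i\lambda_k t}\in X$, whence $\sum_{k}\big|\lag u,e^{-i\lambda_k t}\rag_{L^2}\big|^2\le C\|P_X u\|_{L^2}^2\le C\|u\|_{L^2}^2$. Since $\int_0^T u(t)e^{i\lambda_k t}\,dt=\lag u,e^{-i\lambda_k t}\rag_{L^2}$, this is precisely the claimed inequality, with constant $\sqrt{C(T)}$.

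The argument involves no real obstacle: the only step needing a word of care is the passage from the Bessel bound on the proper subspace $X$ to one on all of $L^2((0,T),\C)$, handled by the orthogonal projection as above. If one preferred not to invoke the Riesz basis theorem, the same upper bound follows from Ingham's classical weighted device: pick a nonnegative $K\in L^1(\R)$ with $\widehat K$ supported in $(-\delta,\delta)$ and $K\ge c>0$ on a short interval $I$ (such $K$ exists, e.g.\ as $|g|^2$ for $g$ band-limited to $(-\delta/2,\delta/2)$); expanding $\int K(t)\big|\sum_k a_k e^{i\lambda_k t}\big|^2\,dt$ the off-diagonal terms vanish thanks to the spectral gap, which gives the dual estimate $\big\|\sum_k a_k e^{i\lambda_k t}\big\|_{L^2(I)}\le \sqrt{C}\,\|(a_k)_k\|_{\ell^2}$ on $I$, then on $[0,T]$ by summing over a finite partition of $[0,T]$ into translates of $I$ (translating $K$ keeps $\widehat K$ supported in $(-\delta,\delta)$), and finally the corollary follows by dualising the estimate for the frequencies $(-\lambda_k)_k$. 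The only mildly technical points there are the construction of $K$ and the bookkeeping of the partition, both entirely standard.
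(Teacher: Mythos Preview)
Your argument is correct and essentially identical to the paper's: both invoke \cite[Theorem 4.6]{komornik} to get a Riesz basis of $X=\overline{\mathrm{span}\{e^{-i\lambda_k t}\}}$, pass from the Bessel bound on $X$ to all of $L^2$ via the orthogonal projection, and handle small $T$ by zero-extension. The only cosmetic differences are that the paper orders the two cases the other way around and uses the asymptotic gap $\gamma=\sup_{K\text{ finite}}\inf_{k,j\notin K,\,k\neq j}|\lambda_k-\lambda_j|$ in place of your $\delta$; since $\gamma\ge\delta$, your threshold $T>2\pi/\delta$ is simply more conservative and the Riesz basis theorem applies a fortiori. Your additional sketch via the band-limited weight $K$ is a genuine alternative not in the paper, but your primary proof matches the paper's.
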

\begin{proof}
Let $T>2\pi/\gamma$ with $\gamma:=\sup_{K\subset\Z}\inf_{\underset{k\neq j}{k,j\in\Z\setminus K}}|\lambda_{k}-\lambda_{j}|,$ where $K$ runs over the finite subsets of $\Z$. Thanks to \cite[Theorem 4.6]{komornik}, the functions $\{e^{-i\lambda_k t}\}_{k\in\Z}$ form a Riesz Basis of the space
$X:=\overline{span\{e^{-i\lambda_k t}:k\in\Z\}}^{ L^2}.$
Now, there exists $C(T)>0$ such that $\sum_{k\in \Z}|\lag g,e^{-i\lambda_k (\cdot)}\rag_{L^2}|^2\leq
C(T)^2\|g\|_{L^2}^2$ for every $g\in X$. Let $P:L^2\longrightarrow X$ be
the orthogonal projector. For $u\in L^2((0,T),\C)$, we have
\begin{equation*}\big\|(\lag u, e^{-i\lambda_k
(\cdot)}\rag_{L^2})_{k\in\Z}\big\|_{\ell^2}=\big\|(\lag Pu, e^{-i\lambda_k
(\cdot)}\rag_{L^2})_{k\in\Z}\big\|_{\ell^2}\leq C(T)\|Pu\|_{L^2}\leq
C(T)\|u\|_{L^2}.
\end{equation*}
When $T\leq 2\pi/\gamma$, for $u\in L^2((0,T),\C)$, we define $\tilde u\in L^2((0,2\pi/\gamma+1),\C)$ such that $\tilde u = u$ on $(0,T)$ and $\tilde u=0$ in $(T, 2\pi/\gamma+1)$. Then
\begin{equation*}\Big\|\int_0^T e^{i\lambda_k \tau}{
u}(\tau)dt\Big\|_{\ell^2}=\Big\|\int_0^{2\pi/\gamma+1} e^{i\lambda_k \tau}{\tilde
u}(\tau)dt\Big\|_{\ell^2}\leq C(2\pi/\gamma+1)\|u\|_{L^2}.\qedhere
\end{equation*}
\end{proof}

\bibliographystyle{plain}
\bibliography{biblio}

\begin{thebibliography}{10}

\bibitem{GGCC}
K.~Ammari, A.~Duca, Romain J., and K.~{Le Balc'h}.
\newblock {The Graph Geometric Control Condition}, March 2025.

\bibitem{Ball}
J.~M. Ball, J.~E. Marsden, and M.~Slemrod.
\newblock {Controllability for distributed bilinear systems}.
\newblock {\em SIAM Journal on Control and Optimization}, 20:575–597, 1982.

\bibitem{Bea05}
K.~Beauchard.
\newblock {Local controllability of a 1-{D} {S}chrödinger equation}.
\newblock {\em Journal de Mathématiques Pures et Appliquées. Neuvième
  Série}, 84(7):851–956, 2005.

\bibitem{Karine2}
K.~Beauchard, H.~Lange, and H.~Teismann.
\newblock {Local exact controllability of a one-dimensional nonlinear
  {S}chrödinger equation}.
\newblock {\em SIAM Journal on Control and Optimization}, 53(5):2781–2818,
  2015.

\bibitem{BL-2010}
K.~Beauchard and C.~Laurent.
\newblock {Local controllability of 1{D} linear and nonlinear {S}chrödinger
  equations with bilinear control}.
\newblock {\em Journal de Mathématiques Pures et Appliquées. Neuvième
  Série}, 94:520–554, 2010.

\bibitem{Karine3}
K.~Beauchard, F.~Marbach, and T.~Perrin.
\newblock {An obstruction to small-time local controllability for a bilinear
  Schrödinger equation}, January 2025.

\bibitem{BP04-1}
K.~Beauchard and E.~Pozzoli.
\newblock {An example of a small-time globally approximately controllable
  bilinear Schrödinger equation}, July 2024.

\bibitem{BP04-2}
K.~Beauchard and E.~Pozzoli.
\newblock {Small-time approximate controllability of bilinear Schrödinger
  equations and diffeomorphisms}, January 2025.

\bibitem{BCMS12}
U.~Boscain, F.~Chittaro, P.~Mason, and M.~Sigalotti.
\newblock {Adiabatic control of the {S}chrödinger equation via conical
  intersections of the eigenvalues}.
\newblock {\em Institute of Electrical and Electronics Engineers. Transactions
  on Automatic Control}, 57(8):1970–1983, 2012.

\bibitem{BGRS15}
U.~Boscain, J.-P. Gauthier, F.~Rossi, and M.~Sigalotti.
\newblock {Approximate controllability, exact controllability, and conical
  eigenvalue intersections for quantum mechanical systems}.
\newblock {\em Communications in Mathematical Physics}, 333(3):1225–1239,
  2015.

\bibitem{UP}
N.~Boussa\"id, M.~Caponigro, and T.~Chambrion.
\newblock Regular propagators of bilinear quantum systems.
\newblock {\em J. Funct. Anal.}, 278(6):108412, 66, 2020.

\bibitem{BdCC13}
N.~Boussaïd, M.~Caponigro, and T.~Chambrion.
\newblock {Weakly coupled systems in quantum control}.
\newblock {\em Institute of Electrical and Electronics Engineers. Transactions
  on Automatic Control}, 58(9):2205–2216, 2013.

\bibitem{BCMS09}
T.~Chambrion, P.~Mason, M.~Sigalotti, and U.~Boscain.
\newblock {Controllability of the discrete-spectrum {S}chrödinger equation
  driven by an external field}.
\newblock {\em Annales de l'Institut Henri Poincaré C. Analyse Non Linéaire},
  26(1):329–349, 2009.

\bibitem{small-time-molecule}
T.~Chambrion and E.~Pozzoli.
\newblock {Small-time bilinear control of {S}chrödinger equations with
  application to rotating linear molecules}.
\newblock {\em Automatica. A Journal of IFAC, the International Federation of
  Automatic Control}, 153:Paper No. 111028, 7, 2023.

\bibitem{coron-2023}
J.-M. Coron, S.~Xiang, and P.~Zhang.
\newblock {On the global approximate controllability in small time of
  semiclassical 1-{D} {S}chrödinger equations between two states with positive
  quantum densities}.
\newblock {\em Journal of Differential Equations}, 345:1–44, 2023.

\bibitem{Duc20}
A.~Duca.
\newblock {Global exact controllability of bilinear quantum systems on compact
  graphs and energetic controllability}.
\newblock {\em SIAM Journal on Control and Optimization}, 58(6):3092–3129,
  2020.

\bibitem{Duc20b}
A.~Duca.
\newblock {Simultaneous global exact controllability in projection of infinite
  1{D} bilinear {S}chrödinger equations}.
\newblock {\em Dynamics of Partial Differential Equations}, 17(3):275–306,
  2020.

\bibitem{Duc21}
A.~Duca.
\newblock {Bilinear quantum systems on compact graphs: well-posedness and
  global exact controllability}.
\newblock {\em Automatica. A Journal of IFAC, the International Federation of
  Automatic Control}, 123:Paper No. 109324, 13, 2021.

\bibitem{Duc19}
A.~Duca.
\newblock {Controllability of bilinear quantum systems in explicit times via
  explicit control fields}.
\newblock {\em International Journal of Control}, 94(3):724–734, 2021.

\bibitem{Vahagn}
A.~Duca and V.~Nersesyan.
\newblock {Bilinear control and growth of Sobolev norms for the nonlinear
  Schrödinger equation}.
\newblock {\em Journal of the European Mathematical Society}, 2024.

\bibitem{Vahagn2}
A.~Duca and V.~Nersesyan.
\newblock {Local Exact Controllability of the One-Dimensional Nonlinear
  Schrödinger Equation in the Case of Dirichlet Boundary Conditions}.
\newblock {\em SIAM Journal on Control and Optimization}, 63(1):S20–S36,
  2025.

\bibitem{small-time-momentum}
A.~Duca and E.~Pozzoli.
\newblock {Small-{T}ime {C}ontrollability for the {N}onlinear {S}chrödinger
  {E}quation on {${\mathbb{R}^N}$} via {B}ilinear {E}lectromagnetic {F}ields}.
\newblock {\em SIAM Journal on Control and Optimization}, 63(1):S37–S52,
  2025.

\bibitem{komornik}
V.~Komornik and P.~Loreti.
\newblock {\em {Fourier series in control theory}}.
\newblock {Springer Monographs in Mathematics}. Springer-Verlag, New York,
  2005.

\bibitem{Mir09}
M.~Mirrahimi.
\newblock {Lyapunov control of a quantum particle in a decaying potential}.
\newblock {\em Annales de l'Institut Henri Poincaré C. Analyse Non Linéaire},
  26(5):1743–1765, 2009.

\bibitem{Mor14}
M.~Morancey.
\newblock {Simultaneous local exact controllability of 1{D} bilinear
  {S}chrödinger equations}.
\newblock {\em Annales de l'Institut Henri Poincaré C. Analyse Non Linéaire},
  31(3):501–529, 2014.

\bibitem{MN15}
M.~Morancey and V.~Nersesyan.
\newblock {Simultaneous global exact controllability of an arbitrary number of
  1{D} bilinear {S}chrödinger equations}.
\newblock {\em Journal de Mathématiques Pures et Appliquées. Neuvième
  Série}, 103(1):228–254, 2015.

\bibitem{Ner10}
V.~Nersesyan.
\newblock {Global approximate controllability for {S}chrödinger equation in
  higher {S}obolev norms and applications}.
\newblock {\em Annales de l'Institut Henri Poincaré C. Analyse Non Linéaire},
  27(3):901–915, 2010.

\bibitem{small-time-wave}
E.~Pozzoli.
\newblock {Small-time global approximate controllability of bilinear wave
  equations}.
\newblock {\em Journal of Differential Equations}, 388:421–438, 2024.

\bibitem{Szego}
G.~Szegő.
\newblock {\em {Orthogonal polynomials}}.
\newblock {American Mathematical Society Colloquium Publications, Vol. XXIII}.
  American Mathematical Society, Providence, RI, fourth edition, 1975.

\bibitem{Turinici}
G.~Turinici.
\newblock On the controllability of bilinear quantum systems.
\newblock In {\em Mathematical models and methods for ab initio quantum
  chemistry}, volume~74 of {\em Lecture Notes in Chem.}, pages 75--92.
  Springer, Berlin, 2000.

\end{thebibliography}
\end{document}